\theoremstyle{plain}
\newtheorem{theorem}{Theorem}[section]
\newtheorem{lemma}[theorem]{Lemma}
\newtheorem{proposition}[theorem]{Proposition}
\newtheorem{corollary}[theorem]{Corollary}
\theoremstyle{definition}
\newtheorem{definition}[theorem]{Definition}
\newtheorem{example}[theorem]{Example}
\theoremstyle{remark}
\newtheorem{remark}[theorem]{Remark}
\newlist{tfae}{enumerate}{1}%
\setlist[tfae,1]{label=(\roman*)}%
\newcommand{\mylabel}[2]{#2\def\@currentlabel{#2}\label{#1}}
\newcommand{\catfont}[1]{\mathsf{#1}}
\newcommand{\catA}{\catfont{A}}
\newcommand{\catB}{\catfont{B}}
\newcommand{\catC}{\catfont{C}}
\newcommand{\catE}{\catfont{E}}
\newcommand{\catX}{\catfont{X}}
\newcommand{\one}{\textit{\fontfamily{qzc}\selectfont 1}}
\newcommand{\two}{\textit{\fontfamily{qzc}\selectfont 2}}
\newcommand{\Set}{\catfont{Set}}
\newcommand{\Ord}{\catfont{Ord}}
\newcommand{\Met}{\catfont{Met}}
\newcommand{\Cat}{\catfont{Cat}}
\newcommand{\Dist}{\catfont{Dist}}
\newcommand{\CAT}{\catfont{CAT}}
\newcommand{\Cats}[1]{#1\text{-}\catfont{Cat}}
\newcommand{\CATs}[1]{#1\text{-}\catfont{CAT}}
\newcommand{\Dists}[1]{#1\text{-}\catfont{Dist}}
\newcommand{\DSets}[1]{#1\text{-}\catfont{DSet}}
\newcommand{\Setss}[1]{\catfont{Set}{/\!\!/}#1}
\newcommand{\nSetss}[1]{\catfont{Set}{|\!|}#1}
\newcommand{\Catss}[1]{\catfont{Cat}{/\!\!/}#1}
\newcommand{\CATss}[1]{\catfont{CAT}{/\!\!/}#1}
\newcommand{\Distss}[1]{\catfont{Dist}{/\!\!/}#1}
\newcommand{\SetV}{\Setss{\mcatV}}
\newcommand{\DistV}{\Distss{\mcatV}}
\newcommand{\nSetV}{\nSetss{\mcatV}}
\newcommand{\CatV}{\Catss{\mcatV}}
\newcommand{\CATV}{\CATss{\mcatV}}
\newcommand{\ncatfont}[1]{\mathbb{#1}}
\newcommand{\ncatA}{\ncatfont{A}}
\newcommand{\ncatB}{\ncatfont{B}}
\newcommand{\ncatN}{\ncatfont{N}}
\newcommand{\ncatX}{\ncatfont{X}}
\newcommand{\ncatY}{\ncatfont{Y}}
\newcommand{\ncatE}{\ncatfont{E}}
\newcommand{\VLipDot}{\mcatV_{\otimes}\text{-}\catfont{Lip}_{\odot}}
\newcommand{\VLip}{\mcatV\text{-}\catfont{Lip}}
\newcommand{\moncatfont}[1]{\mathcal{#1}}
\newcommand{\mcatW}{\moncatfont{W}}
\newcommand{\mcatV}{\moncatfont{V}}
\newcommand{\mcatU}{\moncatfont{U}}
\DeclareMathOperator{\ob}{ob}
\DeclareMathOperator{\ev}{ev}
\DeclareMathOperator{\comp}{comp}
\DeclareMathOperator{\unit}{unit}
\DeclareMathOperator{\assoc}{assoc}
\DeclareMathOperator{\sym}{sym}
\newcommand{\op}{\mathrm{op}}
\DeclareMathOperator{\mO}{O}
\DeclareMathOperator{\ms}{s}
\DeclareMathOperator{\mi}{i}
\newcommand{\cb}[1]{{#1}^{\mathrm{fun}}}
\newcommand{\db}[1]{{#1}^{\mathrm{dist}}}
\newcommand{\kk}{\mathrm{k}}
\newcommand*{\quot}[2]{{^{\textstyle #1}\big/_{\textstyle #2}}}
\def\slashedarrowfill@#1#2#3#4#5{%
  $\m@th\thickmuskip0mu\medmuskip\thickmuskip\thinmuskip\thickmuskip
   \relax#5#1\mkern-7mu%
   \cleaders\hbox{$#5\mkern-2mu#2\mkern-2mu$}\hfill
   \mathclap{#3}\mathclap{#2}%
   \cleaders\hbox{$#5\mkern-2mu#2\mkern-2mu$}\hfill
   \mkern-7mu#4$%
}
\newcommand*{\rightrelarrowfill@}{\slashedarrowfill@\relbar\relbar{\raisebox{0pc}{$\mapstochar$}}\rightarrow}
\newcommand*{\xrelto}[2][]{\ext@arrow 0055{\rightrelarrowfill@}{\;#1\;}{\;#2\;}}
\newcommand*{\rightmodarrowfill@}{\slashedarrowfill@\relbar\relbar{\raisebox{0pc}{$\hspace{1pt}\circ$}}\rightarrow}
\newcommand*{\xmodto}[2][]{\ext@arrow 0055{\rightmodarrowfill@}{\;#1\;}{\;#2\;}}
\newcommand*{\modto}{\xmodto{\;}}
\tikzset{
  relational/.style={
    outer sep=3pt,
    decoration={
      markings,
      mark=at position 0.5 with {\node[transform shape] (tempnode) {\tiny $\rvert$};},
    },
    postaction={decorate},
  },
}
\tikzset{
  distrib/.style={
    outer sep=3pt,
    decoration={
      markings,
      mark=at position 0.5 with {\node[transform shape] (tempnode) {\tiny
          \rmfamily o};},
    },
    postaction={decorate},
  },
}
\tikzset{%
  symbol/.style={%
    draw=none, every to/.append style={%
      edge node={node [sloped, allow upside down,auto=false]{$#1$}}} } }
\title{Notions of Cauchy completeness for normed categories}
\author[1]{Dirk Hofmann\thanks{This work is supported by National Funds through the Portuguese funding agency, FCT -- Fundação para a Ciência e a Tecnologia, within project UID/4106/2025 (CIDMA -- Center for Research and Development in Mathematics and Applications).}}
\author[2]{Walter Tholen}
\affil[1]{University of Aveiro, CIDMA, Department of Mathematics, Portugal,\texttt{dirk@ua.pt}}
\affil[2]{York University, Toronto, Canada, \texttt{tholen@yorku.ca}}
\date{\today}
\begin{document}

\maketitle

\begin{abstract}
  As already mentioned by Lawvere in his 1973 paper, the characterisation of Cauchy completeness of metric spaces in terms of representability of adjoint distributors amounts to the idempotent-split property of an ordinary category when the governing symmetric monoidal-closed category is changed from the extended real half-line to the category of sets. In this paper, for any commutative quantale \(\mcatV\), we extend these two characterisations of Lawvere-style completeness to \(\mcatV\)-normed categories, thus replacing \([0,\infty]\) and \(\Set\) more generally by the category \(\SetV\) of \(\mcatV\)-normed sets. We also establish improvements of recent results regarding the normed convergence of Cauchy sequences in two important \(\mcatV\)-normed categories.
\end{abstract}

\emph{Mathematics Subject Classification}: 18D20, 18D60, 18B35, 54A20.

\emph{Keywords}: Cauchy-complete metric space, normed category, normed distributor, quantale, Lawvere-complete normed category.

\section*{Introduction}

The primary motivation for this paper stems from the following two achievements of Lawvere's famous article \citep{Law73}, which is widely known for the interpretation of (generalized) metric spaces as small categories enriched in the real half-line  \([0,\infty]\):
\begin{itemize}
\item The characterisation of Cauchy completeness of metric spaces as an instance of a categorical notion of completeness (in terms of the representabilty of adjoint distributors) for categories enriched in a symmetric monoidal-closed category \(\mcatW\) which, for \(\mcatW=\Set\), amounts to the idempotent-split property of ordinary categories;
\item The suggestion of a notion of \([0,\infty]\)-normed category as a category enriched in the symmetric monoidal-closed category \(\mcatW=\Setss{[0,\infty]}\) of \([0,\infty]\)-normed sets, and the close interaction of such normed categories with metric spaces.
\end{itemize}
The juxtaposition of these two achievements naturally leads to the question of \emph{how to characterize \([0,\infty]\)-normed categories that are complete in Lawvere's sense}\,? The principal result of this paper (Theorem \ref{d:thm:2}) anwers this question, even in the more general setting employed in the recent article \citep{CHT25} where \([0,\infty]\) has been replaced by a commutative \emph{quantale} \(\mcatV\), \emph{i.e.}, by any small and ``thin'' symmetric monoidal-closed category.

In order to characterise \emph{Lawvere completeness} for a \(\mcatV\)-normed category \(\ncatA\) (that is: for a \(\SetV\)-enriched category \(\ncatA\)), one needs to carefully analyse the notion of adjoint \(\SetV\)-distributor (or, in Lawvere's terminology, of adjoint \(\SetV\)-bimodule). Since a \(\mcatV\)-normed category \(\ncatA\) may be seen as an ordinary category carrying additional structure, it seemed natural to us trying to ``lift'' the proof of the characterisation of adjoint distributors as given by Borceux in \citep{Bor94} from the \(\Set\)- to the \(\SetV\)-level. But when encountering some intricacies in the process we found it necessary to clarify, and be very precise about, various elements of enriched category theory, some of which don't seem to be easily available in sufficient detail in the literature, in particular those concerning enriched distributors. We therefore decided to summarise these elements comprehensively in the introductory Section~\ref{sec:background} and to illustrate them in the three cases of interest for this paper, namely when the enriching category \(\mcatW\) is the category \(\Set\), a quantale \(\mcatV\), or the category \(\SetV\) of \(\mcatV\)-normed sets.

With all needed elements of enriched category readily available, in the principal Section~\ref{sec:compl-la-lawv} of the paper we are then prepared to analyse Lawvere's completeness notion for \(\mcatW\)-categories in the three cases of interest. In presenting the largely known characterisations in the cases  \(\mcatW=\mcatV\) and \(\mcatW=\Set\), following \citep{AL21} we found that the use of the \emph{Isbell conjugation} helps shortening the proofs, and this applies also to the case of principal interest, \(\mcatW=\SetV\). It may not be surprising that, for a \(\mcatV\)-normed category \(\ncatA\) to be Lawvere complete, it is necessary that the ordinary category \(\ncatA_{\circ}\) of \(\ncatA\)-morphism morphisms with norm at least \(\kk\) (the tensor neutral element in \(\mcatV\)) be so. Since a left adjoint distributor \(\catE\modto\ncatA_{\circ}\) (with \(\catE\) the one-morphism category) must therefore be a retract of a representable functor (seen as a distributor), it seems natural to expect that the needed additional ingredient for Lawvere completeness of \(\ncatA\) as a \(\mcatV\)-normed category is the condition that the natural transformations involved in the presentation as a retract should comply in some way with the norm of \(\ncatA\). The specification of this compliance is described in Definition~\ref{w:def:1} and then leads to Theorem~\ref{d:thm:2} which clarifies our initial vague expectation.

Unlike the self-dual notion of Lawvere completeness (see Remark~\ref{w:rem:1}), the notion of \emph{normed colimit} of a sequence in a \(\mcatV\)-normed category as introduced in \citep{CHT25} (in generalization of the notion of \emph{forward limit} of a generalized metric space as in \citep{BBR98}) is clearly not invariant under dualization. In Section~\ref{sec:compl-la-cauchy} we briefly revisit the resulting notion of \emph{Cauchy cocompleteness} of a \(\mcatV\)-normed category in order to improve upon on a couple of results of \citep{CHT25}. Expanding further on the note \citep{CHT25a}, but without imposing any additional condition on \(\mcatV\), we show that the \(\mcatV\)-normed category \(\nSetV\) of \(\mcatV\)-normed sets and arbitrary maps as morphisms is Cauchy cocomplete (Theorem~\ref{d:thm:3}). And under a modest restriction on the quantale that already appeared in \citep{CHT25} we provide an appropriate \(\mcatV\)-generalization of the fact that the normed category of generalized metric spaces with arbitrary maps is Cauchy cocomplete as well (Theorem~\ref{d:thm:4}).

Finally we note that the notion of Lawvere completeness is intrinsically double-categorical as it relates adjoint ``loose'' arrows (distributors) and ``strict'' arrows (functors). A natural extension of the notion of Lawvere completeness to double categories is introduced in \citep{Par21} and further studied in \citep{Nie25}.

\section{Background from enriched category theory}
\label{sec:background}

\subsection{Symmetric monoidal categories}
\label{sec:symm-mono-categ}

A monoidal category (see \citep{EK66, Kel82, Bor94a, Mac98}) is a category \(\mcatW\) equipped with a bifunctor (called the tensor product)
\begin{displaymath}
  \otimes \colon\mcatW\times\mcatW \longrightarrow\mcatW
\end{displaymath}
and an object \(E\) (called the unit) together with natural isomorphisms
\begin{displaymath}
  \assoc \colon (A\otimes B)\otimes C \longrightarrow
  A\otimes (B\otimes C)
\end{displaymath}
and
\begin{displaymath}
  \text{left unit}\colon E\otimes A \longrightarrow A
  \quad\text{and}\quad
  \text{right unit}\colon A\otimes E \longrightarrow A
\end{displaymath}
satisfying the following coherence conditions: for all objects \(A\), \(B\), \(C\) and \(D\) of \(\mcatW\), the diagrams
\begin{displaymath}
  \begin{tikzcd}
    ((A\otimes B)\otimes C)\otimes D %
    \ar{r}{\assoc} %
    \ar{d}[swap]{\assoc\otimes 1} %
    & (A\otimes B)\otimes (C\otimes D) %
    \ar{r}{\assoc} %
    & A\otimes (B\otimes (C\otimes D))\\
    (A\otimes (B\otimes C))\otimes D %
    \ar{rr}[swap]{\assoc} %
    && A\otimes ((B\otimes C)\otimes D) %
    \ar{u}[swap]{1\otimes\assoc}
  \end{tikzcd}
\end{displaymath}
and
\begin{displaymath}
  \begin{tikzcd}
    (A\otimes E)\otimes B %
    \ar{rr}{\assoc} %
    \ar{dr}[swap]{(\text{right unit})\otimes 1}
    && A\otimes (E\otimes B) %
    \ar{dl}{1\otimes(\text{left unit})}\\
    & A\otimes B
  \end{tikzcd}
\end{displaymath}
are commutative. A monoidal category \(\mcatW\) is \emph{symmetric} whenever there are also natural isomorphisms
\begin{displaymath}
  \text{sym}\colon A\otimes B \longrightarrow B\otimes A
\end{displaymath}
given so that the diagrams
\begin{displaymath}
  \begin{tikzcd}[column sep=large]
    (A\otimes B)\times C %
    \ar{r}{\sym\otimes 1} %
    \ar{d}[swap]{\assoc} %
    & (B\otimes A)\otimes C %
    \ar{d}{\assoc}\\
    A\otimes(B\otimes C) %
    \ar{d}[swap]{\sym} %
    & B\otimes (A\otimes C) %
    \ar{d}{1\otimes\sym}\\
    (B\otimes C)\otimes A %
    \ar{r}[swap]{\assoc} %
    & B\otimes(C\otimes A),
  \end{tikzcd}
\end{displaymath}
\begin{displaymath}
  \begin{tikzcd}
    A\otimes E %
    \ar{r}{\sym} %
    \ar{dr}[swap]{\text{right unit}} %
    & E\otimes A %
    \ar{d}{\text{left unit}}\\
    & A,
  \end{tikzcd}
  \qquad
  \begin{tikzcd}
    A\otimes B %
    \ar{r}{\sym} %
    \ar{rd}[swap]{1} %
    & B\otimes A %
    \ar{d}{\sym}\\
    & A\otimes B
  \end{tikzcd}
\end{displaymath}
are commutative, for all objects \(A\), \(B\) and \(C\) of \(\mcatW\). A symmetric monoidal category \(\mcatW\) is called \emph{closed} whenever, for every object \(A\) of \(\mcatW\), the functor \(-\otimes A \colon\mcatW\to\mcatW\) has a right adjoint.

Below we list our principal examples of closed symmetric monoidal categories.
\begin{example}
  \label{d:ex:1}
  \begin{enumerate}
  \item The category \(\mcatW=\Set\) of sets and functions is symmetric monoidal closed, with the usual Cartesian structure.
  \item A complete lattice, viewed as a category, is symmetric monoidal closed if and only if it is a commutative and unital quantale (see \citep{Ros90a}), in this paper simply designated as \emph{quantale}. Whenever we consider a quantale, we typically write \(\mcatV\) instead of \(\mcatW\) and denote the unit element of \(\mcatV\) by \(\kk\).
  \item For every quantale \(\mcatV\), the category \(\mcatW=\SetV\) of \(\mcatV\)-normed sets and \(\mcatV\)-normed maps is symmetric monoidal closed. Here a \emph{$\mcatV$-normed} set is a set $A$ that comes with a function $|{-}|\colon A \to\mcatV$, and a \emph{$\mcatV$-normed map} $(A,|{-}|)\to(B,|{-}|)$ is a mapping $f \colon A \to B$ satisfying $|a|\leq|fa|$ for all $a\in A$.
    \begin{displaymath}
      \begin{tikzcd}
        A %
        \ar{rr}{f}[swap, inner sep=1em]{\leq} %
        \ar{dr}[swap]{|{-}|} %
        && B %
        \ar{ld}{|{-}|} \\
        & \mcatV %
      \end{tikzcd}
    \end{displaymath}
    For $\mcatV$-normed sets $A$ and $B$, their tensor product $A\otimes B$ is carried by the Cartesian product $A\times B$ of the sets \(A\) and \(B\), normed by $|(a,b)|=|a|\otimes|b|$ in $\mcatV$. The unit \(\mcatV\)-normed set $E$ is the set $\{\star\}$ normed by $|{\star}|=\kk$. The right adjoint of \(-\otimes A\) is described in Example~\ref{d:ex:3}, and for more details we refer to \citep{Law73, BG75, CHT25}.
  \end{enumerate}
\end{example}

\subsection{Enriched categories, functors, and natural transformations}
\label{sec:enrich-categ-funct}

Let \(\mcatW\) be a symmetric monoidal category. A \emph{\(\mcatW\)-category} \(\mathcal{A}\) consists of a collection of objects, denoted by \(\ob \mathcal{A}\), together with a family of hom-objects \(\mathcal{A}(A,B)\) in \(\mcatW\), indexed by pairs \((A,B)\) of objects of \(\mathcal{A}\), composition arrows
\begin{displaymath}
  \mathcal{A}(B,C)\otimes \mathcal{A}(A,B)
  \xrightarrow{\quad\comp\quad} \mathcal{A}(A,C)
\end{displaymath}
and identity arrows
\begin{displaymath}
  E \xrightarrow{\quad\unit\quad} \mathcal{A}(A,A),
\end{displaymath}
subject to the associativity and unit axioms: for all objects \(A\), \(B\), \(C\) and \(D\) of \(\mathcal{A}\), the diagrams
\begin{displaymath}
  \begin{tikzcd}
    (\mathcal{A}(C,D)\otimes \mathcal{A}(B,C))\otimes \mathcal{A}(A,B) %
    \ar{rr}{\assoc} %
    \ar{d}[swap]{\comp\otimes 1} %
    && \mathcal{A}(C,D)\otimes (\mathcal{A}(B,C)\otimes \mathcal{A}(A,B)) %
    \ar{d}{1\otimes\comp}\\
    \mathcal{A}(B,D)\otimes \mathcal{A}(A,B) %
    \ar{rd}[swap]{\comp} %
    && \mathcal{A}(C,D)\otimes \mathcal{A}(A,C) %
    \ar{dl}{\comp}\\
    & \mathcal{A}(A,D)
  \end{tikzcd}
\end{displaymath}
and
\begin{displaymath}
  \begin{tikzcd}[column sep=large]
    \mathcal{A}(B,B)\otimes \mathcal{A}(A,B) %
    \ar{r}{\comp} %
    & \mathcal{A}(A,B) %
    & \mathcal{A}(A,B)\otimes \mathcal{A}(A,A) %
    \ar{l}[swap]{\comp} \\
    E\otimes \mathcal{A}(A,B) %
    \ar{u}{\unit\otimes 1} %
    \ar{ur}[swap]{\text{left unit}} %
    && \mathcal{A}(A,B)\otimes E %
    \ar{u}[swap]{1\otimes\unit} %
    \ar{ul}{\text{right unit}}
  \end{tikzcd}
\end{displaymath}
are commutative.

Given \(\mcatW\)-categories \(\mathcal{A}\) and \(\mathcal{B}\), a \emph{\(\mcatW\)-functor} \(F \colon \mathcal{A}\to \mathcal{B}\) consists of a function \(F\) which associates to each object \(A\) of \(\mathcal{A}\) an object \(FA\) of \(\mathcal{B}\), and of arrows
\begin{displaymath}
  F=F_{A,B}\colon\mathcal{A}(A,B)\longrightarrow \mathcal{B}(FA,FB)
\end{displaymath}
in \(\mcatW\) compatible with the identities and composition in the sense that the diagrams
\begin{displaymath}
  \begin{tikzcd}
    \mathcal{A}(B,C)\otimes \mathcal{A}(A,B) %
    \ar{r}{\comp} %
    \ar{d}[swap]{F\otimes F} %
    & \mathcal{A}(A,C) %
    \ar{d}{F} \\
    \mathcal{B}(FB,FC)\otimes \mathcal{B}(FA,FB) %
    \ar{r}[swap]{\comp} %
    & \mathcal{B}(FA,FC),
  \end{tikzcd}
  \quad
  \begin{tikzcd}
    & E
    \ar{dl}[swap]{\unit}
    \ar{dr}{\unit}\\
    \mathcal{A}(A,A)
    \ar{rr}[swap]{F}
    && \mathcal{B}(FA,FA)
  \end{tikzcd}
\end{displaymath}
are commutative, for all objects \(A\), \(B\) and \(C\) of \(\mathcal{A}\).

Finally, for \(\mcatW\)-functors \(F,G \colon \mathcal{A}\to \mathcal{B}\), a \emph{\(\mcatW\)-natural transformation} \(\alpha \colon F\to G\) is a family \(\alpha=(\alpha_A)_{A\in\ob\mathcal{A}}\) of arrows \(\alpha_A \colon E\to \mathcal{B}(FA,GA)\) in \(\mcatW\) subject to the following \(\mcatW\)-naturality condition: for all objects \(A,B\) in \(\mathcal{A}\), the diagram
\begin{displaymath}
  \begin{tikzcd}[column sep=large]
    & E\otimes \mathcal{A}(A,B) %
    \ar{r}{\alpha_B\otimes F} %
    & \mathcal{B}(FB,GB)\otimes \mathcal{B}(FA,FB)
    \ar{dr}\\
    \mathcal{A}(A,B)\ar{ur}\ar{dr} %
    &&& \mathcal{B}(FA,GB)\\
    & \mathcal{A}(A,B)\otimes E %
    \ar{r}[swap]{G\otimes\alpha_A} %
    & \mathcal{B}(GA,GB)\otimes \mathcal{B}(FA,GA) %
    \ar{ur}
  \end{tikzcd}
\end{displaymath}
commutes. For \(\mcatW\)-functors \(K \colon \mathcal{B}\to \mathcal{C}\) and \(L \colon \mathcal{D}\to \mathcal{A}\), the \emph{horizontal composites} \(K\alpha \colon KF\to KG\) and \(\alpha L \colon FL\to GL\) have components
\begin{displaymath}
  \begin{tikzcd}
    E\ar{r}[swap]{\alpha_A}\ar[bend left=25]{rr}{(K\alpha)_A} %
    & \mathcal{B}(FA,GA)\ar{r}[swap]{K} %
    & \mathcal{C}(KFA,KGA)
  \end{tikzcd}
\end{displaymath}
and
\begin{displaymath}
  E\xrightarrow{\quad(\alpha L)_D=\alpha_{LD}\quad}
  \mathcal{C}(FLD,GLD),
\end{displaymath}
respectively. Given also a \(\mcatW\)-functor \(H \colon \mathcal{A}\to \mathcal{B}\) and a \(\mcatW\)-natural transformation \(\beta \colon G\to H\) with components \(\beta_A \colon E\to \mathcal{B}(GA,HA)\), the \emph{vertical composite} \(\beta\alpha\) has components \((\beta\alpha)_A\) given by
\begin{displaymath}
  E\cong E\otimes E\xrightarrow{\quad\beta_A\otimes\alpha_A\quad}
  \mathcal{B}(GA,HA)\otimes \mathcal{B}(FA,GA)
  \xrightarrow{\quad\comp\quad} \mathcal{B}(FA,HA).
\end{displaymath}
Regarding the vertical composition, the \(\mcatW\)-natural transformation \(1_F=(1_{FA})_{A\in\ob \mathcal{A}}\) acts as an identity, moreover, we have the \emph{interchange laws}
\begin{align*}
  K1_F&=1_{KF}, & K(\beta\alpha) &=(K\beta)(K\alpha),\\
  1_FL&=1_{FL}, & (\beta\alpha)L &=(\beta L)(\alpha L).
\end{align*}
We obtain the 2-category \(\CATs{\mcatW}\) of \(\mcatW\)-categories, \(\mcatW\)-functors and \(\mcatW\)-natural transformations, and its full subcategoy \(\Cats{\mcatW}\) of small \(\mcatW\)-categories.

We also recall that the monoidal structure of \(\mcatW\) induces naturally a monoidal structure on \(\CATs{\mcatW}\) and \(\Cats{\mcatW}\): for \(\mcatW\)-categories \(\mathcal{A}\) and \(\mathcal{B}\), their tensor product \(\mathcal{A}\otimes \mathcal{B}\) has \(\ob\mathcal{A}\times\ob\mathcal{B}\) as object-class, and
\begin{displaymath}
  \mathcal{A}\otimes \mathcal{B}((A,B),(A',B'))
  = \mathcal{A}(A,A')\otimes \mathcal{B}(B,B').
\end{displaymath}
The neutral element is given by the \(\mcatW\)-category \(\mathcal{E}\) with one object (say, \(\ob\mathcal{E}=\{\star\}\)) and with \(\mathcal{E}(\star,\star)=E\).

\begin{example}
  For \(\mcatW=\Set\), we have \(\Cats{\Set}=\Cat\) and \(\CATs{\Set}=\CAT\), the usually 2-category of small and of locally small categories respectively, functors and natural transformations, with its Cartesian structure. We usually denote ordinary (small) categories by \(\catA\), \(\catB\), \dots.
\end{example}

\begin{example}\label{d:ex:4}
  We consider now the case that \(\mcatW=\mcatV\) is a quantale. Since \(\mcatV\) is a thin category, all coherence conditions are automatically satisfied, moreover, all structure arrows become properties. Therefore a \emph{\(\mcatV\)-category} \(X\) consists of a set \(X\) and a function \(X(-,-)\colon X\times X\to\mcatV\) satisfying
  \begin{displaymath}
    \kk\leq X(x,x)
    \quad\text{and}\quad
    X(y,z)\otimes X(x,y)\leq X(x,z).
  \end{displaymath}
  A \emph{\(\mcatV\)-functor} \(f \colon X\to Y\) must satisfy \(X(x,x')\leq Y(fx,fx')\), for all \(x,x'\in X\). Finally, for \(\mcatV\)-functors \(f,g \colon X\to Y\), there is a (unique) \(\mcatV\)-natural transformation \(f\to g\) if and only if, for all \(x\in X\), \(\kk\leq Y(fx,gx)\), and we write \(f\leq g\) in that case \citep{HST14}.

  For \(\mcatV=\one\) the terminal quantale, \(\Cats{\one}\) is simply the category \(\Set\) of sets and functions, it becomes a 2-category with the discrete structure on hom-sets. For \(\mcatV=\two\) the two-element chain \(\two=\{0\leq 1\}\) with \(\otimes=\wedge\) and \(\kk=1\), the 2-category \(\Cats{\two}\) is the 2-category of ordered sets, monotone maps and the point-wise order on hom-sets; here an order relation on a set \(X\) is a reflexive and transitive binary relation on \(X\). Finally, for \(\mcatV=[0,\infty]\) ordered by the natural greater-or-equal relation \(\geq\) and with \(\otimes=+\) and \(\kk=0\), the 2-category \(\Cats{[0,\infty]}\) is the 2-category \(\Met\) of (generalised) metric spaces, non-expansive maps and the point-wise order on hom-sets (see \citep{Law73}).
\end{example}

\begin{example}
  Let now \(\mcatV\) be a quantale and consider \(\mcatW=\SetV\). A \(\SetV\)-category \(\ncatA\) is an ordinary category where, moreover, \(\ncatA(a,b)\) is an object of \(\SetV\) for all objects \(a,b\) in \(\ncatA\). The identity arrows \(E\to\ncatA(a,a)\) are in \(\SetV\), that is, \(\kk\leq |1_a|\), and the composition arrows
  \begin{displaymath}
    \ncatA(b,c)\otimes\ncatA(a,b) \longrightarrow\ncatA(a,b)
  \end{displaymath}
  are in \(\SetV\), that is,
  \begin{displaymath}
    |g|\otimes |f|\leq |g\cdot f|.
  \end{displaymath}
  A \(\SetV\)-functor \(F \colon\ncatA\to\ncatB\) is an ordinary functor with arrows
  \begin{displaymath}
    F \colon\ncatA(a,a') \longrightarrow\ncatB(Fa,Fa')
  \end{displaymath}
  in \(\SetV\), that is \(|f|\leq|Ff|\) for every \(f \colon a\to a'\) in \(\ncatA\). For  \(\SetV\)-functors \(T,S \colon\ncatA\to\ncatB\), a \(\SetV\)-natural transformation \(\alpha \colon T\to S\) is an ordinary natural transformation such that the family \((\alpha_a \colon E\to\ncatB(Ta,Sa))_a\) indexed by objects of \(\ncatA\) lives in \(\SetV\), that is, \(\alpha_a \colon Ta\to Sa\) must satisfy \(\kk\leq|\alpha_a|\) for all objects \(a\) in \(\ncatA\). We refer to \(\SetV\)-categories, \(\SetV\)-functors and \(\SetV\)-natural transformations as \emph{\(\mcatV\)-normed categories}, \emph{\(\mcatV\)-normed functors} and \emph{\(\mcatV\)-normed natural transformations}, respectively; and simply write \(\CatV\) and \(\CATV\) instead of \(\Cats{(\SetV)}\) and \(\CATs{(\SetV)}\), respectively. The \emph{tensor product} $\ncatA\otimes\ncatB$ of the \(\mcatV\)-normed categories \(\ncatA\) and \(\ncatB\) is carried by the ordinary category $\ncatA\times\ncatB$, structured by
  \begin{displaymath}
    |(f,g)|=|f|\otimes|g|;
  \end{displaymath}
  and with neutral object \(\ncatE\) the one-arrow category with \(|1_{\star}|=\kk\). For more details see \citep{CHT25}.

  In particular, for \(\mcatV=\one\), the 2-category \(\Catss{\one}\) is equivalent to the 2-category \(\Cat\). A \(\two\)-normed category can be equivalently described as category \(\ncatA\) equipped with a wide subcategory, a \(\two\)-normed functor is a functor which restricts to the chosen subcategories, and a \(\two\)-normed natural transformation has all components in the chosen subcategory.
\end{example}

\subsection{Change of base}
\label{sec:change-base}

Given symmetric monoidal categories \(\mcatU\) and \(\mcatW\), a \emph{lax monoidal functor}
\begin{displaymath}
  \Gamma \colon\mcatW \longrightarrow\mcatU
\end{displaymath}
is a functor that comes with natural \(\mcatU\)-morphisms
\begin{equation}
  \label{d:eq:7}
  E_{\mathcal{U}}\xrightarrow{\quad\unit\quad}\Gamma(E_{\mcatW})
  \quad\text{and}\quad
  \Gamma(A)\otimes\Gamma(B)
  \xrightarrow{\quad\tau=\tau_{A,B}\quad}\Gamma(A\otimes B)
\end{equation}
satisfying the coherence equations (see \citep{Mac98}, for instance)
\begin{displaymath}
  \begin{tikzcd}
    \Gamma(A)\otimes(\Gamma(B)\otimes \Gamma(C)) %
    \ar{r}{\mathrm{assoc}} %
    \ar{d}[swap]{1\otimes\tau} %
    & (\Gamma(A)\otimes\Gamma(B))\otimes\Gamma(C) %
    \ar{d}{\tau\otimes 1}\\
    \Gamma(A)\otimes \Gamma(B\otimes C) %
    \ar{d}[swap]{\tau} %
    & \Gamma(A\otimes B)\otimes\Gamma C %
    \ar{d}{\tau}\\
    \Gamma(A\otimes(B\otimes C)) %
    \ar{r}[swap]{\Gamma(\mathrm{assoc})} %
    & \Gamma((A\otimes B)\otimes C),
  \end{tikzcd}
\end{displaymath}
\begin{displaymath}
  \begin{tikzcd}
    \Gamma A\otimes E_{\mcatU} %
    \ar{r}{\text{right unit}} %
    \ar{d}[swap]{1\otimes\unit}
    & \Gamma A\\
    \Gamma(A)\otimes\Gamma E_{\mcatW} %
    \ar{r}[swap]{\tau} %
    & \Gamma(A\otimes E_{\mcatW}), %
    \ar{u}[swap]{\Gamma(\text{right unit})}
  \end{tikzcd}
  \qquad
  \begin{tikzcd}
    E_{\mcatU}\otimes\Gamma A %
    \ar{r}{\text{left unit}} %
    \ar{d}[swap]{\unit\otimes 1}
    & \Gamma A\\
    \Gamma(E_{\mcatW})\otimes \Gamma(A) %
    \ar{r}[swap]{\tau} %
    & \Gamma(E_{\mcatW}\otimes A), %
    \ar{u}[swap]{\Gamma(\text{left unit})}
  \end{tikzcd}
\end{displaymath}
for all objects \(A\), \(B\) and \(C\) of \(\mcatW\). Furthermore, \(\Gamma\) is called \emph{strong monoidal} whenever the arrows \eqref{d:eq:7} are \(\mcatU\)-isomorphisms, \(\Gamma\) is called \emph{strict monoidal} whenever these arrows are \(\mcatU\)-identities, and \(\Gamma\) is called \emph{symmetric} whenever, for all objects \(A\) and \(B\) of \(\mcatW\), the diagram
\begin{displaymath}
  \begin{tikzcd}
    \Gamma(A)\otimes \Gamma(B) %
    \ar{r}{\text{sym}} %
    \ar{d}[swap]{\tau}
    & \Gamma(B)\otimes \Gamma(A) %
    \ar{d}{\tau}\\
    \Gamma(A\otimes B) %
    \ar{r}[swap]{\Gamma(\text{sym})} %
    & \Gamma(B\otimes A)
  \end{tikzcd}
\end{displaymath}
is commutative.

Each lax monoidal functor \(\Gamma\colon\mcatW \to\mcatU\) induces 2-functors
\begin{displaymath}
  \cb{\Gamma} \colon\Cats{\mcatW}\longrightarrow\Cats{\mcatU}
  \quad\text{and}\quad
  \cb{\Gamma} \colon\CATs{\mcatW}\longrightarrow\CATs{\mcatU}
\end{displaymath}
sending
\begin{itemize}
\item the \(\mcatW\)-category \(\mathcal{A}\) to the \(\mcatU\)-category \(\cb{\Gamma}\mathcal{A}\) with the same collection of objects and with
  \begin{displaymath}
    (\cb{\Gamma}\mathcal{A})(A,B)=\Gamma(\mathcal{A}(A,B)),
  \end{displaymath}
\item the \(\mcatW\)-functor \(F \colon \mathcal{A}\to \mathcal{B}\) to the \(\mathcal{U}\)-functor \(\cb{\Gamma}F \colon \cb{\Gamma}\mathcal{A}\to\cb{\Gamma}\mathcal{B}\) with the same effect as \(F\) on objects and with \((\cb{\Gamma}F)_{A,A'}=\Gamma(F_{A,A'})\),
  \begin{displaymath}
    \cb{\Gamma}\mathcal{A}(A,A')=\Gamma(\mathcal{A}(A,A'))
    \xrightarrow{\quad\Gamma(F_{A,A'})\quad}
    \Gamma(\mathcal{B}(FA,FA'))
    =\cb{\Gamma}\mathcal{B}(\cb{\Gamma}F(A),\cb{\Gamma}F(A'))
  \end{displaymath}
\item the \(\mcatW\)-natural transformation \(\alpha \colon F\to G\) with \(\mcatW\)-functors \(F,G \colon \mathcal{A}\to \mathcal{B}\) and \(\alpha=(\alpha_A \colon E_{\mcatW}\to \mathcal{B}(FA,GA))_{A\in\ob \mathcal{A}}\) to the \(\mcatU\)-natural transformation \(\cb{\Gamma}\alpha\colon \cb{\Gamma}F\to\cb{\Gamma}G\) with components
  \begin{displaymath}
    E_{\mcatU}\xrightarrow{\quad\unit\quad}\Gamma(E_{\mcatW})
    \xrightarrow{\quad\Gamma(\alpha_A)\quad}
    \Gamma(\mathcal{B}(FA,GA))
    =\cb{\Gamma}\mathcal{B}(\cb{\Gamma}F(A),\cb{\Gamma}G(A)).
  \end{displaymath}
\end{itemize}
We note that \(\cb{\Gamma}(\mathcal{A}^{\op})=\cb{\Gamma}(\mathcal{A})^{\op}\). Moreover, \(\cb{\Gamma}\) is also (strong, strict, symmetric) monoidal if \(\Gamma \colon\mcatW\to\mcatU\) is so since then the arrows~\eqref{d:eq:7} induce \(\mcatU\)-functors (\(\mcatU\)-isomorphisms, \(\mcatU\)-identities)
\begin{displaymath}
  \mathcal{E}_{\mcatU}\longrightarrow\cb{\Gamma}(\mathcal{E}_{\mcatW})
  \quad\text{and}\quad
  \cb{\Gamma}(\mathcal{A})\otimes\cb{\Gamma}(\mathcal{B})
  \longrightarrow\cb{\Gamma}(\mathcal{A}\otimes \mathcal{B})
\end{displaymath}
satisfying the corresponding coherence conditions.

\begin{proposition}
  \label{d:prop:2}
  For each quantale \(\mcatV\), the forgetful functor
  \begin{displaymath}
    \mO \colon\SetV \longrightarrow\Set
  \end{displaymath}
  is symmetric strict monoidal and topological.
\end{proposition}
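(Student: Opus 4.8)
The plan is to treat the two assertions separately, disposing of the monoidal claim first since it is immediate from the way the tensor and unit of \(\SetV\) are built, and then establishing topologicity by producing initial lifts by hand. First I would record that \(\mO\) sends a \(\mcatV\)-normed set \((A,|{-}|)\) to its underlying set \(A\) and acts as the identity on underlying maps. For strict monoidality I would observe that the unit \(\mcatV\)-normed set \(E=(\{\star\},|{\star}|=\kk)\) has underlying set \(\{\star\}\), which is exactly the monoidal unit of \((\Set,\times)\), and that \(\mO(A\otimes B)=A\times B=\mO A\times\mO B\) because \(A\otimes B\) is carried by the Cartesian product. Hence the comparison morphisms \(\unit\) and \(\tau\) of~\eqref{d:eq:7} are the respective identities, so \(\mO\) is strict monoidal; and since the symmetry isomorphism of \(\SetV\) is carried by the coordinate-swap of \(\Set\), one has \(\mO(\sym)=\sym\), and all coherence diagrams then hold automatically in \(\Set\).

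For topologicity the plan is to show that every \(\mO\)-structured source admits a unique \(\mO\)-initial lift. Given a set \(X\) together with a (possibly large) family of maps \(f_i\colon X\to\mO A_i\), where each \(A_i=(A_i,|{-}|_i)\) is a \(\mcatV\)-normed set, I would equip \(X\) with the pointwise meet
\[
  |x|=\bigwedge_{i}|f_i(x)|_i,
\]
which exists because \(\mcatV\) is a complete lattice and \(\{|f_i(x)|_i\}\) is a subset of the set \(\mcatV\) regardless of the size of the index class. By construction \(|x|\leq|f_i(x)|_i\) for every \(i\), so each \(f_i\) becomes a \(\mcatV\)-normed map out of \((X,|{-}|)\). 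The key step is then initiality: for any \(\mcatV\)-normed set \(B=(B,|{-}|_B)\) and any map \(g\colon\mO B\to X\) in \(\Set\), I would check that \(g\colon B\to(X,|{-}|)\) is \(\mcatV\)-normed if and only if every composite \(f_i\circ g\colon B\to A_i\) is. This falls straight out of the defining property of the meet, since \(|b|_B\leq\bigwedge_i|f_i(g(b))|_i\) holds for all \(b\in B\) precisely when \(|b|_B\leq|f_i(g(b))|_i\) holds for all \(b\) and all \(i\). Uniqueness of the lift I would extract from antisymmetry of the order on \(\mcatV\): if \(|{-}|'\) were another initial norm on \(X\) for the same source, applying initiality of each structure to \(\mathrm{id}_X\) yields \(|x|'\leq|x|\) and \(|x|\leq|x|'\) for all \(x\), hence \(|{-}|=|{-}|'\).

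I expect the only genuine subtlety to be bookkeeping around the direction of the norm inequality: since a \(\mcatV\)-normed map satisfies \(|a|\leq|fa|\), a map into \(X\) imposes an upper bound on the source norm, so the initial structure is the \emph{largest} norm compatible with the source and is realised by the meet rather than the join. The one structural point worth flagging is that completeness of \(\mcatV\) as a lattice is exactly what guarantees initial lifts for arbitrary, and in particular large, structured sources — with the empty source producing the indiscrete norm constant at the top element of \(\mcatV\). Everything else is a routine unwinding of the definitions of \(\mcatV\)-normed set and \(\mcatV\)-normed map.
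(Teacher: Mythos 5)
Your proof is correct and follows exactly the route the paper intends: the paper states the proposition without proof, but the initial-structure formula it records immediately afterwards, \(|a|=\bigwedge_{i\in I}|f_i(a)|\), is precisely the meet you construct, and the strict monoidality is, as you say, immediate from the definition of the tensor and unit of \(\Setss{\mcatV}\). Your additional care about large structured sources (completeness of \(\mcatV\) as a lattice) and uniqueness of initial lifts via antisymmetry fills in exactly the routine details the paper delegates to the references.
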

For the notion of topological functor we refer to \citep{HST14}, for instance. We recall from \citep{CHT25} that, for a family \((f_i \colon A\to B_i)_{i\in I}\) of maps where each \(B_i\) is normed, the \emph{initial structure} on \(A\) is given by
\begin{displaymath}
  |a|=\bigwedge_{i\in I}|f_i(a)|;
\end{displaymath}
likewise, for a family \((g_i \colon A_i\to B)_{i\in I}\) where each \(A_i\) is normed, the \emph{final structure} on \(B\) is given by
\begin{displaymath}
  |b|=\bigvee\{|a|\mid a\in A_i,\, g_i(a)=b, i\in I\}.
\end{displaymath}

The strict monoidal functor \(\mO \colon\SetV\to\Set\) induces the 2-functors
\begin{displaymath}
  \cb{\mO} \colon\CatV \longrightarrow\Cat
  \quad\text{and}\quad
    \cb{\mO} \colon\CATV \longrightarrow\CAT
\end{displaymath}
which forget the norm on each hom-set. Moreover, both functors are strict monoidal: for normed categories \(\ncatA\) and \(\ncatB\),
\begin{displaymath}
  \cb{\mO}(\ncatE)=\catE
  \quad\text{and}\quad
  \cb{\mO}(\ncatA\otimes\ncatB)
  =\cb{\mO}(\ncatA)\times\cb{\mO}(\ncatB).
\end{displaymath}

Following \citet{Kel82}, we also consider the representable 2-functors
\begin{displaymath}
  (-)_{\circ}:=(\CatV)(\ncatE,-)\colon\CatV \longrightarrow \Cat
  \quad\text{and}\quad
    (-)_{\circ}:=(\CATV)(\ncatE,-)\colon\CATV \longrightarrow \CAT
\end{displaymath}
which send a $\mcatV$-normed category $\ncatA$ to the category $\ncatA_{\circ}$ with the same objects as $\ncatA$, but with only those morphisms $f \colon a \to b$ in $\ncatA$ with $\kk\leq |f|$. We note that \((-)_{\circ}\) is induced by the symmetric lax monoidal functor
\begin{displaymath}
  \SetV(E,-) \colon\SetV \longrightarrow\Set
\end{displaymath}
which sends the \(\mcatV\)-normed set \(A\) to the set
\begin{math}
  \{a\in A\mid \kk\leq|a|\}
\end{math}
and \(\mcatV\)-normed maps to the corresponding restrictions.

\begin{example}\label{w:ex:1}
  The symmetric strict monoidal functor
  \begin{align*}
    \ms \colon \SetV
    & \longrightarrow\mcatV,\\
    f \colon A\to B
    & \longmapsto
      \bigvee_{a\in A}|a|\leq \bigvee_{b\in B}|b|
  \end{align*}
  induces the symmetric strict monoidal 2-functors
  \begin{displaymath}
    \cb{\ms} \colon\CatV\longrightarrow\Cats{\mcatV}
    \quad\text{and}\quad
    \cb{\ms} \colon\CATV\longrightarrow\CATs{\mcatV},
  \end{displaymath}
  sending a \(\mcatV\)-normed category \(\ncatA\) to the \(\mcatV\)-category \(\cb{\ms}\ncatA\) with the same set of objects and with
  \begin{displaymath}
    (\cb{\ms}\ncatA)(a,b)=\bigvee_{f \colon a\to b}|f|.
  \end{displaymath}
  The functor \(\ms \colon \SetV\to\mcatV\) has a right adjoint
  \begin{displaymath}
    \mi \colon\mcatV \longrightarrow\SetV,\quad
    v \longmapsto (\{\star\},|{\star}|=v)
  \end{displaymath}
  which is symmetric strong monoidal and induces the corresponding right adjoint functors
  \begin{displaymath}
    \cb{\mi} \colon\Cats{\mcatV}\longrightarrow\CatV
    \quad\text{and}\quad
    \cb{\mi} \colon\CATs{\mcatV}\longrightarrow\CATV
  \end{displaymath}
  which interpret the \(\mcatV\)-category \(X\) as the \(\mcatV\)-normed category \(\cb{\mi}X\) with exactly one arrow \(x\to y\) for each pair \((x,y)\) of objects, normed by
  \begin{displaymath}
    |(x,y)|=X(x,y).
  \end{displaymath}
\end{example}

\subsection{$\mcatW$ as a $\mcatW$-category}
\label{sec:mcatw-as-mcatw}

\emph{From now on we assume that our symmetric monoidal category \(\mcatW\) is also closed}, and we denote the right adjoint to \(-\otimes A\) by \([A,-]\). Then the monoidal closed category \(\mcatW\) gives rise to a \(\mcatW\)-category, also denoted by \(\mcatW\), with the same set of objects as \(\mcatW\) and, for all objects \(A\) and \(B\) of \(\mcatW\),
\begin{displaymath}
  \mcatW(A,B)=[A,B].
\end{displaymath}
Here the unit arrow \(E\to\mcatW(A,A)\) corresponds to the left unit arrow \(E\otimes A\to A\), and the composition arrow \(\mcatW(B,C)\times\mcatW(A,B)\to\mcatW(A,C)\) to the composite
\begin{displaymath}
  \mcatW(B,C)\otimes\mcatW(A,B)\otimes A
  \longrightarrow
  \mcatW(B,C)\otimes B
  \longrightarrow A.
\end{displaymath}

\begin{example}\label{d:ex:3}
  Recall that all categories of Example~\ref{d:ex:1} are closed. It is important to note that, for \(\mcatW=\SetV\), the internal hom $[A,B]$ has carrier set $\Set(A,B)$ (\emph{all mappings $\varphi \colon A \to B$}), with their norm defined by
  \begin{displaymath}
    |\varphi|=\bigwedge_{a\in A}[|a|,|\varphi a|].
  \end{displaymath}
\end{example}

\begin{example}
  The monoidal category \(\SetV\) becomes a \(\mcatV\)-normed category whose objects are $\mcatV$-normed sets, but whose normed hom-sets of morphisms $A\to B$ are given by the internal hom $[A,B]$ of $\SetV$, {\em i.e.}\ by \(\Set(A,B)\). We write \(\nSetss{\mcatV}\) to denote this normed category and note
  \begin{displaymath}
    (\nSetss{\mcatV})_\circ = \Setss{\mcatV}
    \quad\text{and}\quad
    \cb{\mO}(\nSetss{\mcatV})\simeq\Set.
  \end{displaymath}
\end{example}

\begin{theorem}
  \label{d:thm:1}
  Let \(\Gamma \colon\mcatW\to\mcatU\) be a symmetric lax monoidal functor between closed symmetric monoidal categories. Then \(\Gamma\) induces an \(\mcatU\)-functor \(\gamma \colon\cb{\Gamma}(\mcatW)\to\mcatU\) where
  \begin{enumerate}
  \item \(\gamma(A)=\Gamma(A)\) for every object \(A\) of \(\cb{\Gamma}\mathcal{W}\),
  \item for all objects \(A\) and \(B\) of \(\cb{\Gamma}\mathcal{W}\), the arrow
    \begin{displaymath}
      \gamma_{A,B}\colon\cb{\Gamma}(A,B)
      =\Gamma(\mcatW(A,B))\longrightarrow\mcatU(\Gamma A,\Gamma B)
    \end{displaymath}
    in \(\mcatU\) corresponds to the composite of the canonical maps
    \begin{displaymath}
      \Gamma(\mcatW(A,B))\otimes \Gamma A
      \xrightarrow{\quad\tau\quad}
      \Gamma(\mcatW(A,B)\otimes A)
      \xrightarrow{\quad\Gamma\ev\quad}
      \Gamma B.
    \end{displaymath}
  \end{enumerate}
\end{theorem}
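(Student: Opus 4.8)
The plan is to take $\gamma$ to be defined exactly as prescribed in the statement and then to verify the two $\mcatU$-functor axioms of Section~\ref{sec:enrich-categ-funct}. The essential tool is that, since $\mcatU$ is closed, a morphism into an internal hom $\mcatU(\Gamma A,\Gamma B)=[\Gamma A,\Gamma B]$ is completely determined by its transpose, obtained by tensoring with $\Gamma A$ on the right and post-composing with the evaluation $[\Gamma A,\Gamma B]\otimes\Gamma A\to\Gamma B$; in particular, two such morphisms coincide as soon as their transposes do. By the very definition of $\gamma_{A,B}$, its transpose is $\Gamma(\ev)\circ\tau$, so throughout the verification I may replace $\ev\circ(\gamma_{A,B}\otimes 1_{\Gamma A})$ by $\Gamma(\ev)\circ\tau$. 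This reduces each axiom --- a priori an equality of morphisms whose codomain is an internal hom of $\mcatU$ --- to an equality of morphisms with codomain $\Gamma A$, respectively $\Gamma C$, which can be settled by a diagram chase.

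For the unit axiom I would recall that, by the change-of-base recipe of Section~\ref{sec:change-base}, the unit of the $\mcatU$-category $\cb{\Gamma}(\mcatW)$ at $A$ is $E_{\mcatU}\xrightarrow{\unit}\Gamma(E_{\mcatW})\xrightarrow{\Gamma(\unit)}\Gamma(\mcatW(A,A))$, where the second arrow is $\Gamma$ applied to the unit of $\mcatW$ as a $\mcatW$-category, itself the transpose of the left unit $E_{\mcatW}\otimes A\to A$ (Section~\ref{sec:mcatw-as-mcatw}). Transposing $\gamma_{A,A}$ precomposed with this unit and substituting $\Gamma(\ev)\circ\tau$ for the $\gamma_{A,A}$-part, I would move the factor $\Gamma(\unit)$ through $\tau$ by naturality, collapse $\Gamma(\ev)\circ\Gamma(\unit\otimes 1_A)$ to $\Gamma$ of the left unit of $\mcatW$ using the defining property of that unit, and then apply the left-unit coherence diagram of the lax monoidal functor $\Gamma$ to identify the outcome with the left unit $E_{\mcatU}\otimes\Gamma A\to\Gamma A$ of $\mcatU$. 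As this last map is exactly the transpose of the unit of $\mcatU$ as a $\mcatU$-category at $\Gamma A$, the axiom follows.

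The composition axiom is where I expect the main obstacle to lie, since both legs of the relevant square are now morphisms $\Gamma(\mcatW(B,C))\otimes\Gamma(\mcatW(A,B))\to\mcatU(\Gamma A,\Gamma C)$ and the diagram chase is longer. Again I would compare transposes, i.e.\ tensor with $\Gamma A$ and apply evaluation. On the leg that first composes in $\cb{\Gamma}(\mcatW)$, namely $\Gamma(\comp)\circ\tau$, and then applies $\gamma_{A,C}$, substituting $\Gamma(\ev)\circ\tau$ and using naturality of $\tau$ brings $\Gamma(\comp)$ inside, whereupon the defining property of the composition of $\mcatW$ as a $\mcatW$-category --- that $\ev\circ(\comp\otimes 1_A)$ equals $\ev\circ(1\otimes\ev)$ up to reassociation --- rewrites the whole transpose in terms of two successive evaluations. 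On the other leg, which applies $\gamma_{B,C}\otimes\gamma_{A,B}$ followed by the internal composition of $\mcatU$, I would transpose the latter to $\ev\circ(1\otimes\ev)$ and substitute $\Gamma(\ev)\circ\tau$ for each $\gamma$, producing a composite that threads $1\otimes\tau$ and then $\tau$. The two transposes then match by a single use of the associativity coherence diagram for $\tau$ (relating $\tau\circ(\tau\otimes 1)$ with $\tau\circ(1\otimes\tau)$ through $\Gamma(\assoc)$) together with one naturality square for $\tau$ along $1\otimes\ev$. The only delicate point is the bookkeeping of the associativity isomorphisms of $\mcatW$ and $\mcatU$, which is precisely what the coherence of $\Gamma$ absorbs; once these are tracked, the equality is forced and $\gamma$ is a $\mcatU$-functor.
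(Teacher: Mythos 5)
Your proposal is correct and follows essentially the same route as the paper's own (direct) proof: both define $\gamma_{A,B}$ by the transpose $\Gamma(\ev)\circ\tau$ and verify the unit and composition axioms by passing to transposes under $-\otimes\Gamma A\dashv[\Gamma A,-]$, using naturality of $\tau$, the lax monoidal coherence of $\Gamma$ (left-unit coherence for the unit axiom, associativity coherence for composition), and the defining transposes of $\unit$ and $\comp$ in $\mcatW$ and $\mcatU$ --- exactly the steps labelled in the paper's diagram chase, including the handling of associators by coherence. The only difference is cosmetic: the paper additionally notes that the result follows abstractly from \citep[Proposition~6.4.5]{Bor94a} together with \citep[Theorem~6.6 of Chapter~1]{EK66}, but the computation it actually presents is the one you outline.
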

\begin{proof}
  By \citep[Proposition~6.4.5]{Bor94a}, the symmetric monoidal functor  \(\Gamma \colon\mcatW\to\mcatU\) is also \emph{closed}, then apply \citep[Theorem~6.6 of Chapter~1]{EK66}. However, for the convenience of the reader, we give here a direct computation. Firstly, the commutativity of the diagram
  \begin{displaymath}
    \begin{tikzcd}[column sep=large]
      \Gamma(\mathcal{W}(A,A)) %
      \ar{r}{\gamma_{A,A}} %
      & \mathcal{U}(\Gamma A,\Gamma A)\\
      \Gamma(E_{\mathcal{W}}) %
      \ar{u}{\Gamma(\unit)} %
      & E_{\mathcal{U}} %
      \ar{u}[swap]{\unit} %
      \ar{l}{\unit}
    \end{tikzcd}
  \end{displaymath}
  follows, by adjunction, from the commutativity of the outer diagram below.
  \begin{displaymath}
    \begin{tikzcd}[column sep=large,row sep=large]
      \Gamma(\mathcal{W}(A,A))\otimes\Gamma A %
      \ar{r} %
      & \Gamma(\mathcal{W}(A,A)\otimes A) %
      \ar{r}{\Gamma\ev} %
      & \Gamma A\\
      & \Gamma(E_{\mathcal{W}}\otimes A) %
      \ar[dashed]{u}{\Gamma(\unit\otimes 1)} %
      \ar[dashed]{ur}[swap]{\Gamma(\text{left unit})}\\
      \Gamma E_{\mathcal{W}}\otimes \Gamma A %
      \ar{uu}{\Gamma(\unit)\otimes 1} %
      \ar[dashed]{ur}
      && E_{\mathcal{U}}\otimes \Gamma A %
      \ar{uu}[swap]{\text{left unit}} %
      \ar{ll}{\unit\otimes 1}
    \end{tikzcd}
  \end{displaymath}
  Here the left hand diagram commutes because the family of arrow \(\Gamma X\otimes \Gamma Y\to \Gamma(X\otimes Y)\) is a natural transformation, the upper right diagram commutes by definition of the unit \(E_{\mathcal{W}}\to \mcatW(A,A)\), and the lower right diagram commutes because \(\Gamma\) is lax monoidal.

  Secondly, we have to show that the diagram
  \begin{displaymath}
    \begin{tikzcd}[column sep=large]
      \Gamma(\mcatW(B,C))\otimes\Gamma(\mcatW(A,B)) %
      \ar{r}{\tau}\ar{d}{\gamma\otimes\gamma} %
      & \Gamma(\mcatW(B,C)\otimes\mcatW(A,B)) %
      \ar{r}{\Gamma(\comp)} %
      & \Gamma(\mcatW(A,C)) %
      \ar{d}{\gamma}\\
      \mcatU(\Gamma B,\Gamma C)\otimes \mcatU(\Gamma A,\Gamma B) %
      \ar{rr}[swap]{\comp} && \mcatU(\Gamma A,\Gamma C)
    \end{tikzcd}
  \end{displaymath}
  is commutative which, by adjunction, follows from the commutativity of the ``solid'' diagram below where we assume that the associativity arrows are identities (see also \citep[Theorem~XI.3.1]{Mac98}).
  \begin{displaymath}\tiny
    \begin{tikzcd}[row sep=large, column sep=tiny]
      & \Gamma(\mcatW(B,C)\otimes\mcatW(A,B))\otimes \Gamma A %
      \ar{rr}{\Gamma(\comp)\otimes 1} %
      \ar[dashed]{d}{\tau} %
      && \Gamma(\mcatW(A,C))\otimes \Gamma A %
      \ar[bend left=15]{rdd}{\gamma\otimes 1}
      \ar[dashed]{d}{\tau} \\
      & \Gamma(\mcatW(B,C)\otimes\mcatW(A,B)\otimes A) %
      \ar[dashed]{rdd}{\Gamma(1\otimes\ev)} \ar[dashed]{rr}{\Gamma(\comp\otimes 1)} %
      && \Gamma(\mcatW(A,C)\otimes A) %
      \ar[dashed, bend right=15]{rdd}[swap]{\Gamma\ev}\\
      \Gamma(\mcatW(B,C))\otimes \Gamma(\mcatW(A,C))\otimes \Gamma A %
      \ar[bend left=15]{ddr}[near end]{1\otimes\gamma\otimes 1} %
      \ar[dashed]{d}[swap]{1\otimes\tau} %
      \ar[bend left=15]{uur}{\tau\otimes 1} %
      &&&& \mcatU(\Gamma A,\Gamma C)\otimes \Gamma A %
      \ar{d}{\ev}\\
      \Gamma(\mcatW(B,C))\otimes\Gamma(\mcatW(A,B)\otimes A) %
      \ar[dashed, bend right=15]{ddr}[swap]{1\otimes \Gamma\ev} %
      \ar[dashed, bend right=15]{uur}{\tau} && \Gamma(\mcatW(B,C)\otimes B) %
      \ar[dashed]{rr}{\Gamma\ev} %
      && \Gamma C\\
      & \Gamma(\mcatW(B,C))\otimes\mcatU(\Gamma A,\Gamma B)\otimes\Gamma A %
      \ar[dashed]{d}[swap]{1\otimes\ev} \ar{rr}{\gamma\otimes 1\otimes 1} &&\mcatU(\Gamma B,\Gamma C)\otimes\mcatU(\Gamma A,\Gamma B)\otimes \Gamma A %
      \ar[dashed]{d}{1\otimes\ev} %
      \ar[bend right=15]{uur}[near start]{\comp\otimes 1}\\
      & \Gamma(\mcatW(B,C))\otimes \Gamma B %
      \ar[dashed,bend right=15]{uur}[swap]{\tau} %
      \ar[dashed]{rr}[swap]{\gamma\otimes 1} %
      && \mcatU(\Gamma B,\Gamma C)\otimes \Gamma B %
      \ar[dashed, bend right]{uur}[swap]{\ev}
    \end{tikzcd}
  \end{displaymath}
  In fact, as indicated in the diagram above, we obtain
  \begin{multline*}
    \ev\cdot(\gamma\otimes 1)\cdot(\Gamma(\comp)\otimes 1)\cdot(\tau\otimes 1)\\
    \begin{aligned}
      &=\Gamma(\ev)\cdot\tau\cdot(\Gamma(\comp)\otimes 1)\cdot(\tau\otimes 1)
      && \text{[definition of \(\gamma\)]}\\
      &=\Gamma(\ev)\cdot\Gamma(\comp\otimes 1)\cdot\tau\cdot(\tau\otimes 1)
      &&\text{[naturality of \(\tau\)]}\\
      &=\Gamma(\ev)\cdot\Gamma(\comp\otimes 1)\cdot\tau\cdot(1\otimes\tau)
      &&\text{[\(\Gamma\) is lax monoidal]}\\
      &=\Gamma(\ev)\cdot\Gamma(1\otimes\ev)\cdot\tau\cdot(1\otimes\tau) && \text{[definition of \(\comp\)]}\\
      &=\Gamma(\ev)\cdot\tau\cdot(1\otimes\Gamma(\ev))\cdot(1\otimes\tau)
      && \text{[naturality of \(\tau\)]}\\
      &=\ev\cdot(\gamma\otimes 1)\cdot(1\otimes\Gamma(\ev))\cdot(1\otimes\tau)
      && \text{[definition of \(\gamma\)]}\\
      &=\ev\cdot(\gamma\otimes 1)\cdot(1\otimes\ev)\cdot(1\otimes\gamma \otimes 1)
      && \text{[definition of \(\gamma\)]]}\\
      & =\ev\cdot(\gamma\otimes\ev)\cdot (1\otimes\gamma \otimes 1)\\
      & =\ev\cdot(1\otimes\ev)\cdot(\gamma\otimes 1\otimes 1)\cdot (1\otimes\gamma \otimes 1)\\
      & =\ev\cdot(\comp\otimes 1)\cdot(\gamma\otimes 1\otimes 1)\cdot (1\otimes\gamma \otimes 1)
      && \text{[definition of \(\comp\)]}.
    \end{aligned}
  \end{multline*}
\end{proof}

\begin{example}
  Let \(\mcatV\) be a quantale and consider the symmetric strong monoidal functor \(\mi \colon\mcatV\to\SetV\) and the symmetric strict monoidal functor \(\ms \colon\SetV\to\mcatV\) of Example \ref{w:ex:1}. Then \(\cb{\mi}(\mcatV)\) is the \(\mcatV\)-normed category with objects the elements of \(\mcatV\), and for all \(u,v\in\mcatV\),
  \begin{displaymath}
    \cb{\mi}(\mcatV)(u,v)=(\{\star\},[u,v]).
  \end{displaymath}
  Furthermore, the \(\mcatV\)-normed functor
  \begin{displaymath}
    \cb{\mi}(\mcatV) \longrightarrow\nSetV
  \end{displaymath}
  sends \(u\in\mcatV\) to \((\{\star\},u)\) and, for \(u,v\in\mcatV\), the map
  \begin{displaymath}
    \cb{\mi}(\mcatV)(u,v) \longrightarrow\nSetV((\{\star\},u),(\{\star,v\}))
  \end{displaymath}
  sends \(\star\) to the unique map \(\{\star\}\to\{\star\}\). This map is indeed \(\mcatV\)-normed since, for all \(u,v\in\mcatV\),
  \begin{displaymath}
    \nSetV((\{\star\},u),(\{\star\},v))=[u,v].
  \end{displaymath}
  The elements of the (large) \(\mcatV\)-category \(\cb{\ms}(\nSetV)\) are the normed sets, and
  \begin{displaymath}
    \cb{\ms}(\nSetV)(A,B)=\bigvee_{f \colon A\to B}|f|
  \end{displaymath}
  for all normed sets \(A\) and \(B\). The \(\mcatV\)-functor \(\cb{\ms}(\nSetV)\to\mcatV\) sends the normed set \(A\) to \(\bigvee_{a\in A}|a|\).
\end{example}

\subsection{Extraordinary naturality}
\label{sec:extr-natur}

Next we recall the notion of \emph{(extraordinary) \(\mcatW\)-naturality} (see also \citep{Dub70}), for a symmetric monoidal category \(\mcatW\) and for certain families of \(\mcatW\)-arrows with respect to a \(\mcatW\)-functor \(T \colon \mathcal{A}^{\op}\otimes \mathcal{A}\to\mcatW\). First note that \(\mcatW\)-functoriality of \(T\) gives us arrows
\begin{displaymath}
  T(-,-)\colon(\mathcal{A}^{\op}\otimes \mathcal{A})((A,B),(A',B'))
  \longrightarrow [T(A,B),T(A',B')]
\end{displaymath}
in \(\mcatW\) which can equivalently be written as
\begin{displaymath}
  T(-,-)\colon \mathcal{A}(A',A)\otimes T(A,B)\otimes \mathcal{A}(B,B')
  \longrightarrow T(A',B').
\end{displaymath}
Considering \(A=A'\) and using the identity law, we obtain the arrows
\begin{displaymath}
  T(A,-)\colon T(A,B)\otimes \mathcal{A}(B,B')
  \longrightarrow T(A,B')
\end{displaymath}
and, equivalently,
\begin{displaymath}
  T(A,-)\colon \mathcal{A}(B,B')\longrightarrow
  [T(A,B),T(A,B')]
\end{displaymath}
in \(\mcatW\) which identify \(T(A,-)\) as a \(\mcatW\)-functor \(T(A,-)\colon \mathcal{A}\longrightarrow\mcatW\). Similarly, for \(B=B'\) we obtain
\begin{displaymath}
   T(-,B)\colon \mathcal{A}(A',A)\otimes T(A,B)
  \longrightarrow T(A',B)
\end{displaymath}
which makes \(T(-,B)\) a \(\mcatW\)-functor \(T(-,B)\colon \mathcal{A}^{\op}\longrightarrow\mcatW\). A family \((\gamma_A \colon T(A,A)\to K)_A\), with \(K\) fixed, is \emph{\(\mcatW\)-natural} whenever, for all objects \(A,B\) in \(\mathcal{A}\), the diagram
\begin{equation}
  \label{d:eq:1}
  \begin{tikzcd}[row sep=large, column sep=large]
    \mathcal{A}(A,B) %
    \ar{r}{T(B,-)} %
    \ar{d}[swap]{T(-,A)} %
    & {[T(B,A),T(B,B)]} %
    \ar{d}{[T(B,A),\gamma_{B}]} \\
    {[T(B,A),T(A,A)]} %
    \ar{r}[swap]{[T(B,A),\gamma_{A}]} %
    & {[T(B,A),K]} %
  \end{tikzcd}
\end{equation}
in \(\mcatW\) commutes. Using the adjunction \(-\otimes T(B,A)\dashv [T(B,A),-]\) and symmetry of the tensor in \(\mcatW\), the square \eqref{d:eq:1} commutes if and only if
\begin{equation}
  \label{d:eq:2}
  \begin{tikzcd}[row sep=large, column sep=large]
    \mathcal{A}(A,B)\otimes T(B,A) %
    \ar{r}{T(B,-)} %
    \ar{d}[swap]{T(-,A)} %
    & T(B,B) %
    \ar{d}{\gamma_{B}} \\
    T(A,A) %
    \ar{r}[swap]{\gamma_{A}} %
    & K %
  \end{tikzcd}
\end{equation}
commutes in \(\mcatW\). A universal (with respect to the commutativity of \eqref{d:eq:2}) \(\mcatW\)-natural family \((\gamma_A \colon T(A,A)\to C)\) is called a \emph{coend} of \(T\). If it exists, the coend of \(T\) is unique and one writes
\begin{displaymath}
  \int^{A\in\ob\mathcal{A}}T(A,A)
  \quad\text{or simply}\quad
  \int^{\mathcal{A}}T
\end{displaymath}
for the object \(C\). If the category \(\mcatW\) is cocomplete, then every functor \(T \colon \mathcal{A}^{\op}\otimes \mathcal{A}\to\mcatW\) with \(\mathcal{A}\) small admits a coend: by \eqref{d:eq:2}, it can be constructed as the coequaliser
\begin{displaymath}
  \begin{tikzcd}
    \sum_{A,B\in\ob \mathcal{A}}\mathcal{A}(A,B)\otimes T(B,A) %
    \ar[shift left=3pt]{r} %
    \ar[shift right=3pt]{r} %
    & \sum_{A\in\ob \mathcal{A}}T(A,A) %
    \ar{r}{\gamma} & \int^{\mathcal{A}}T
  \end{tikzcd}
\end{displaymath}
in \(\mcatW\).

Dually, a family \((\beta_A \colon K\to T(A,A))_{A\in\ob \mathcal{A}}\) is \(\mcatW\)-natural whenever, for all objects \(A,B\) in \(\mcatW\), the diagram
\begin{equation}
  \label{d:eq:3}
  \begin{tikzcd}[row sep=large, column sep=large]
    \mathcal{A}(A,B) %
    \ar{r}{T(A,-)} %
    \ar{d}[swap]{T(-,B)} %
    & {[T(A,A),T(A,B)]} %
    \ar{d}{[\beta_{A},T(A,B)]} \\
    {[T(B,B),T(A,B)]} %
    \ar{r}[swap]{[\beta_{B},T(A,B)]} %
    & {[K,T(A,B)]} %
  \end{tikzcd}
\end{equation}
commutes. Using the adjunction, \(([-,T(A,B)]\colon\mcatW\to\mcatW^{\op}) \dashv ([-,T(A,B)]\colon\mcatW^{\op}\to\mcatW)\), the square~\eqref{d:eq:3} commutes if and only if
\begin{displaymath}
  \begin{tikzcd}[row sep=large, column sep=large]
    K %
    \ar{r}{\beta_{A}} %
    \ar{d}[swap]{\beta_{B}} %
    & T(A,A) %
    \ar{d}{T(A,-)} \\
    T(B,B) %
    \ar{r}[swap]{T(-,B)} %
    & {[\mathcal{A}(A,B),T(A,B)]} %
  \end{tikzcd}
\end{displaymath}
commutes. A universal \(\mcatW\)-natural family \((\lambda_A \colon L\to T(A,A))_{A\in\ob \mathcal{A}}\) is called an \emph{end} for \(T\), and one writes
\begin{displaymath}
  \int_{A\in\ob\mathcal{A}}T(A,A)
  \quad\text{or simply}\quad
  \int_{\mathcal{A}}T
\end{displaymath}
for the object \(L\). If the category \(\mcatW\) is complete, then every functor \(T \colon \mathcal{A}^{\op}\otimes \mathcal{A}\to\mcatW\) with \(\mathcal{A}\) small admits an end: it can be constructed as the equaliser
\begin{displaymath}
  \begin{tikzcd}
    \int_{\mathcal{A}}T\ar{r}{\lambda} %
    & \prod_{A\in\ob\mathcal{A}}T(A,A) %
    \ar[shift left=3pt]{r} %
    \ar[shift right=3pt]{r} %
    & \prod_{A,B\in\ob\mathcal{A}}[\mathcal{A}(A,B),T(A,B)]
  \end{tikzcd}
\end{displaymath}
in \(\mcatW\).

\begin{remark}
  \label{d:rem:3}%
  The notion of \(\mcatW\)-natural family is self-dual in the following sense. For a \(\mcatW\)-functor \(T \colon\mathcal{A}^{\op}\otimes \mathcal{A}\to\mcatW\), we may consider the functor \(T^s \colon{\mathcal{A}^{\op}}^{\op}\otimes \mathcal{A}^{\op}\to\mcatW\) defined by ``swapping the arguments''.
  \begin{displaymath}
    \begin{tikzcd}
      {\mathcal{A}^{\op}}^{\op}\otimes \mathcal{A}^{\op}
      =\mathcal{A}\otimes \mathcal{A}^{\op}
      \ar{r}
      \ar[bend left]{rr}{T^s}
      & \mathcal{A}^{\op}\otimes \mathcal{A}
      \ar{r}[swap]{T}
      & \mcatW
    \end{tikzcd}
  \end{displaymath}
  A family \((\gamma_A \colon T^s(A,A)=T(A,A)\to K)_A\) is \(\mcatW\)-natural for \(T\) is and only if it is so for \(T^s\), in particular, there exists a coend of \(T\) if and only if there exists an coend of \(T^s\) and, moreover,
  \begin{displaymath}
    \int^{\mathcal{A}}T\cong\int^{\mathcal{A}^{\op}}T^s.
  \end{displaymath}
  Similarly, an end of \(T\) is also an end of \(T^s\) and vice versa.
  \end{remark}

\begin{example}
  \begin{enumerate}
  \item For \(\mcatW=\mcatV\) a quantale and a \(\mcatV\)-functor \(t \colon X^{\op}\otimes X\to\mcatV\), a \(\mcatV\)-natural family \((u\to t(x,x))_{x\in X}\) is determined by a lower bound \(u\) of \(\{t(x,x)\mid x\in X\}\), hence an end for \(t\) is given by an infimum of \(\{t(x,x)\mid x\in X\}\). Likewise, a coend for \(t\) is given by a supremum of \(\{t(x,x)\mid x\in X\}\).
  \item Consider now \(\mcatW=\Set\) and a functor \(T \colon\catX^{\op}\times\catX\to\Set\). Then a family \((\beta_{x}\colon K\to T(x,x))_{x\in\ob\catX}\) is \(\Set\)-natural if and only if, for all \(f \colon x\to y\) in \(\catX\), the diagram
    \begin{equation}
      \label{d:eq:4}
      \begin{tikzcd}[row sep=large, column sep=large]
        K %
        \ar{r}{\beta_{x}} %
        \ar{d}[swap]{\beta_{y}} %
        & T(x,x) %
        \ar{d}{T(x,f)} \\
        T(y,y) %
        \ar{r}[swap]{T(f,y)} %
        & T(x,y) %
      \end{tikzcd}
    \end{equation}
    commutes, that is, \((\beta_{x}\colon K\to T(x,x))_{x\in\ob\catX}\) is a \emph{wedge} (see \citep{Mac98}). Similarly, a family \((\gamma_x \colon T(x,x)\to K)_{x\in\ob\catX}\) is \(\Set\)-natural if and only if it is a wedge. Therefore the notion of \(\Set\)-enriched (co)end coincides with the usual notion of (co)end for ordinary categories.
  \item Let us now consider the case \(\mcatW=\SetV\) and let \(T \colon\ncatX^{\op}\otimes\ncatX\to\nSetV\) be a normed functor. Similarly to the \(\Set\)-case, a family \((\beta_{x}\colon K\to T(x,x))_{x\in\ob\ncatX}\) of arrows in \(\SetV\) is \(\SetV\)-natural if and only if the square~\eqref{d:eq:4} commutes in \(\SetV\), for all \(f \colon x\to y\) in \(\ncatX\). Dually, a family \((\gamma_x\colon T(x,x)\to K)_{x\in\ob\ncatX}\) is \(\SetV\)-natural if and only if, for all \(f \colon x\to y\),
    \begin{displaymath}
      \begin{tikzcd}[row sep=large, column sep=large]
        T(y,x) %
        \ar{r}{T(y,f)} %
        \ar{d}[swap]{T(f,x)} %
        & T(y,y) %
        \ar{d}{\gamma_{y}} \\
        T(x,x) %
        \ar{r}[swap]{\gamma_x} %
        & K %
      \end{tikzcd}
    \end{displaymath}
    commutes.
  \end{enumerate}
\end{example}

Recall that the functor \(\mO \colon\SetV\to\Set\) is topological (see Proposition~\ref{d:prop:2}), which facilitates the understanding of (co)ends for \(\mcatV\)-normed categories. We note first the following fact.

\begin{lemma}
  Let \((c_i \colon X_i\to X)_{i\in I}\) be an \(\mO\)-final cocone and \((p_j \colon Y\to Y_j)_{j\in J}\) be an \(\mO\)-initial cone in \(\SetV\), \(Y\) be a normed set, \(f \colon X\to Y\) be a map and \((f_i \colon X_i\to Y)_{i\in I}\) and \((f_j \colon X\to Y_j)_{j\in J}\) families of maps so that the diagrams
  \begin{displaymath}
    \begin{tikzcd}
      X\ar{r}{f} & Y\\
      X_i\ar{u}{c_i}\ar{ur}[swap]{f_i}
    \end{tikzcd}
    \qquad\qquad
    \begin{tikzcd}
      X\ar{r}{f}\ar{rd}[swap]{g_j} %
      & Y\ar{d}{p_j}\\ & Y_j
    \end{tikzcd}
  \end{displaymath}
  commute for every \(i\in I\) and \(j\in J\). Then, in the \(\mcatV\)-normed category \(\nSetV\), one has
  \begin{displaymath}
    |f|=\bigwedge_{i\in I}|f_i|
    \quad\text{and}\quad
    |f|=\bigwedge_{j\in J}|g_j|.
  \end{displaymath}
\end{lemma}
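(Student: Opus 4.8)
The plan is to reduce everything to the explicit formula for the norm of a morphism in \(\nSetV\) together with two elementary adjunction identities that hold in any quantale. Recall that a morphism \(h \colon U \to V\) of \(\nSetV\) is simply a map of underlying sets, normed by \(|h| = \bigwedge_{u \in U} [|u|, |hu|]\), since the hom-objects of \(\nSetV\) are the internal homs of \(\SetV\) from Example~\ref{d:ex:3}. The two facts about \(\mcatV\) that I will use repeatedly are that \([-,w]\) carries joins to meets, \(\,[\bigvee_k u_k, w] = \bigwedge_k [u_k, w]\,\) (because each \(t\otimes(-)\) preserves joins), and that \([u,-]\) preserves meets, \(\,[u, \bigwedge_k w_k] = \bigwedge_k [u, w_k]\,\) (because \([u,-]\) is a right adjoint). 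Alongside these I will use the explicit final and initial structure formulas recalled after Proposition~\ref{d:prop:2}, together with the commutativity hypotheses \(f_i = f \cdot c_i\) and \(g_j = p_j \cdot f\).

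For the first equality I would start from \(|f| = \bigwedge_{x \in X}[|x|, |fx|]\) and substitute the final structure \(|x| = \bigvee\{|x_i| \mid c_i(x_i) = x\}\). Applying the join-to-meet identity turns each term \([|x|, |fx|]\) into \(\bigwedge\{[|x_i|, |fx|] \mid c_i(x_i) = x\}\); and since \(c_i(x_i) = x\) forces \(|fx| = |f(c_i x_i)| = |f_i(x_i)|\), the whole expression becomes a meet of terms \([|x_i|, |f_i(x_i)|]\) indexed by pairs \((i, x_i)\). Re-indexing this double meet — first over \(x \in X\), then over the pairs lying above each \(x\) — as a single meet over all pairs \((i, x_i)\) yields exactly \(\bigwedge_{i}\bigwedge_{x_i \in X_i} [|x_i|, |f_i(x_i)|] = \bigwedge_i |f_i|\).

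The second equality is more direct. Starting again from \(|f| = \bigwedge_{x}[|x|, |fx|]\), I substitute the initial structure \(|fx| = \bigwedge_j |p_j(fx)|\) and push the meet through the second variable of the internal hom via the meet-preservation identity, obtaining \([|x|, |fx|] = \bigwedge_j [|x|, |p_j(fx)|]\). Using \(p_j(fx) = g_j(x)\) and interchanging the two meets then produces \(\bigwedge_j\bigwedge_x [|x|, |g_j(x)|] = \bigwedge_j |g_j|\).

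The only point that needs a little care — and the one I would flag as the main obstacle — is the re-indexing in the first equality when the cocone is not jointly surjective. An element \(x \in X\) not hit by any \(c_i\) has \(|x| = \bigvee\emptyset\), the bottom element of \(\mcatV\), so \([|x|, |fx|]\) is the top element and contributes nothing to the outer meet; on the reindexed side such an \(x\) simply corresponds to an empty inner meet, again the top element, so the two sides agree. Once this convention is made explicit the bookkeeping is routine, and the symmetry of the argument (joins on the domain side handled by \([-,w]\), meets on the codomain side handled by \([u,-]\)) is exactly what makes the two displayed equalities formally dual.
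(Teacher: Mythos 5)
Your proof is correct and follows essentially the same route as the paper's: expand \(|f|=\bigwedge_{x}[|x|,|fx|]\), substitute the final (resp.\ initial) structure, use that \([-,w]\) turns joins into meets (resp.\ that \([u,-]\) preserves meets), and re-index over the fibres of the \(c_i\). The paper only writes out the first equality (dismissing the second as "similar, in fact, easier"), and your extra remark about elements not hit by any \(c_i\) is a care point the paper's re-indexing handles implicitly via empty meets.
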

\begin{proof}
  To see the first formula, we calculate
  \begin{align*}
    |f|
    &=\bigwedge_{x\in X}[|x|,|f(x)|]\\
    &=\bigwedge_{x\in X}
      [\bigvee_{i\in I}\bigvee_{x'\in c_i^{-1}(x)}|x'|,|f(x)|]\\
    &=\bigwedge_{i\in I}\bigwedge_{x\in X}
      \bigwedge_{x'\in c_i^{-1}(x)}[|x'|,|f(x)|]\\
    &=\bigwedge_{i\in I}\bigwedge_{x'\in X_i}[|x'|,|f_i(x')|]
    =\bigwedge_{i\in I}|f_i|.
  \end{align*}
  The proof of the second one is similar, in fact, easier.
\end{proof}

\begin{proposition}
  \label{d:prop:1}
  Let \(T \colon\ncatX^{\op}\otimes\ncatX\to\nSetV\) be a normed functor. Then there exists an end (respectively a coend) for \(T\) if and only if there exists an end (respectively a coend) for the functor
  \begin{displaymath}
    T^{\mO}\colon\cb{\mO}(\ncatX)^{\op}\times\cb{\mO}(\ncatX)
    =\cb{\mO}(\ncatX^{\op}\otimes\ncatX)
    \xrightarrow{\quad\cb{\mO}(T)\quad}\cb{\mO}(\nSetV)
    \xrightarrow[\text{Theorem~\ref{d:thm:1}}]{\gamma}\Set.
  \end{displaymath}
  Moreover, an end \((\int_{\ncatX}T\to T(x,x))_{x\in\ob\ncatX}\) for \(T\) may be constructed by first taking an end \((\lambda_x \colon\int_{\cb{\mO}(\ncatX)}T^{\mO}\to \mO(T(x,x)))_{x\in\ob\ncatX}\) for \(T^{\mO}\) and then equipping \(\int_{\cb{\mO}(\ncatX)}T^{\mO}\) with the initial structure with respect to the forgetful functor \(\mO \colon\SetV\to\Set\). Dually, a coend \((T(x,x)\to\int^{\ncatX}T)_{x\in\ob\ncatX}\) for \(T\) may be constructed by first taking a coend \((\mO(T(x,x))\to \int^{\cb{\mO}(\ncatX)}T^{\mO})_{x\in\ob\ncatX}\) for \(T^{\mO}\) and then equipping \(\int^{\cb{\mO}(\ncatX)}T^{\mO}\) with the final structure with respect to the forgetful functor \(\mO \colon\SetV\to\Set\). In particular,
  we have
  \begin{displaymath}
    \cb{\mO}\left(\int^{\ncatX}T\right)=\int^{\cb{\mO}(\ncatX)}T^{\mO}
    \quad\text{and}\quad
    \cb{\mO}\left(\int_{\ncatX}T\right)=\int_{\cb{\mO}(\ncatX)}T^{\mO}.
  \end{displaymath}
\end{proposition}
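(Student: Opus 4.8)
The plan is to lean on two properties of the forgetful functor \(\mO\colon\SetV\to\Set\): it is faithful and topological (Proposition~\ref{d:prop:2}). First I would record the translation that drives everything. Since the functor \(\gamma\) of Theorem~\ref{d:thm:1} is, on underlying sets, the identity evaluation map, we have \(T^{\mO}(x,y)=\mO(T(x,y))\) with \(T^{\mO}\) acting on arrows exactly as \(T\) does. Faithfulness of \(\mO\) then shows that a family \((\beta_x\colon K\to T(x,x))_x\) of \(\SetV\)-morphisms is \(\SetV\)-natural precisely when the underlying maps form an ordinary wedge for \(T^{\mO}\) in \(\Set\) \emph{and} each component \(\beta_x\) is a \(\mcatV\)-normed map. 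Consequently, an end for \(T\) is nothing but a \emph{universal normed wedge}, and the whole task reduces to comparing these with the universal ordinary wedges computing \(\int_{\cb{\mO}(\ncatX)}T^{\mO}\). Topologicity is what supplies the missing norm canonically, through the initial and final structures recalled after Proposition~\ref{d:prop:2}.

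For the end, I would assume \(L=\int_{\cb{\mO}(\ncatX)}T^{\mO}\) exists in \(\Set\), with universal wedge \((\lambda_x\colon L\to\mO(T(x,x)))_x\), and endow \(L\) with the initial structure for the family \((\lambda_x)\), so that \(|\ell|=\bigwedge_x|\lambda_x(\ell)|\). Each \(\lambda_x\) is then normed by construction, and the wedge, being an underlying \(\Set\)-wedge with normed components, is \(\SetV\)-natural. The decisive point is universality: given any normed wedge \((\beta_x\colon K\to T(x,x))\), the unique \(\Set\)-factorisation \(u\colon\mO K\to L\) with \(\lambda_x u=\mO\beta_x\) is \emph{automatically} normed, because \(|u(k)|=\bigwedge_x|\lambda_x(u(k))|=\bigwedge_x|\beta_x(k)|\geq|k|\), the last inequality holding since each \(\beta_x\) is normed. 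This single computation is the crux: the initial structure is the largest norm on \(L\) making all \(\lambda_x\) normed, and it is exactly what forces factorisations to respect the norm. The coend is dual: one equips the \(\Set\)-coend with the final structure \(|b|=\bigvee\{\,|\xi|\mid c_x(\xi)=b\,\}\), making the coprojections \(c_x\) normed, and the symmetric one-line inequality shows every factorisation of a normed cowedge is again normed.

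For the converse — existence of the \(\SetV\)-(co)end forces existence of the \(\Set\)-(co)end, and \(\cb{\mO}\) commutes with it — I would start from a universal normed wedge \((\lambda_x\colon L\to T(x,x))\) and show \((\mO\lambda_x)\) is universal in \(\Set\). Here lies the one genuine subtlety, and the main obstacle: the \(\SetV\)-universal property only tests against \emph{normed} objects \(K\), whereas the \(\Set\)-end must factor arbitrary sets. This gap is bridged by feeding the enriched property a suitably normed object. Given an arbitrary \(\Set\)-wedge \((\beta_x\colon K\to\mO(T(x,x)))\), endow \(K\) with the bottom norm \(|k|=\bot=\bigvee\varnothing\) of \(\mcatV\); then every component \(\beta_x\) is trivially normed, so \((\beta_x)\) becomes a \(\SetV\)-natural wedge, and \(\SetV\)-universality yields a unique normed \(u\colon(K,\bot)\to L\). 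Its underlying map provides the required factorisation, and \emph{any} \(\Set\)-factorisation is a normed map out of \((K,\bot)\), hence equals \(u\); thus existence and uniqueness in \(\Set\) follow at once. This identifies \(\mO L\) with \(\int_{\cb{\mO}(\ncatX)}T^{\mO}\), and together with the forward construction gives the displayed equalities \(\cb{\mO}(\int^{\ncatX}T)=\int^{\cb{\mO}(\ncatX)}T^{\mO}\) and \(\cb{\mO}(\int_{\ncatX}T)=\int_{\cb{\mO}(\ncatX)}T^{\mO}\). The coend case is entirely dual, using instead the top norm \(|k|=\top=\bigwedge\varnothing\), which makes every map \emph{into} \(K\) normed.

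Conceptually, the whole statement is an instance of the fact that a topological functor creates limits and colimits, specialised to the equaliser and coequaliser diagrams that compute ends and coends in Section~\ref{sec:extr-natur}. I would mention this as the organising principle but carry out the explicit verification above, since the proposition asks precisely for the initial/final-structure construction.
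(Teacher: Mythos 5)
Correct, and essentially the paper's own approach: the paper states Proposition~\ref{d:prop:1} without an explicit proof, intending it to follow from the topologicity of \(\mO\colon\SetV\to\Set\) (Proposition~\ref{d:prop:2}) via initial/final lifts, and your explicit verification---initial structure making the \(\Set\)-end wedge universal among normed wedges, final structure dually for coends---is exactly that argument. The one step the paper leaves fully implicit, namely recovering \(\Set\)-universality from \(\SetV\)-universality by testing against the bottom-normed (for ends) and top-normed (for coends) structures on an arbitrary set, you supply correctly in your converse direction.
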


Finally, for \(\mcatW\) complete and \(\mcatW\)-functors \(F,G \colon \mathcal{A}\to \mathcal{B}\), one puts
\begin{equation}
  \label{d:eq:5}
  [\mathcal{A},\mathcal{B}](F,G)=\int_{A\in\ob \mathcal{A}}\mathcal{B}(FA,GA)
\end{equation}
whenever the end exists. The elements
\begin{displaymath}
  E \longrightarrow [\mathcal{A},\mathcal{B}](F,G)
\end{displaymath}
correspond precisely to \(\mcatW\)-natural families
\begin{displaymath}
  (E \longrightarrow \mathcal{B}(FA,GA))_{A\in\ob \mathcal{A}}
\end{displaymath}
of \(\mathcal{B}(FA,GA))_{A\in\ob \mathcal{A}}\), that is, to \(\mcatW\)-natural transformations \(F\to G\). The end in \eqref{d:eq:5} certainly exists if \(\mathcal{A}\) is small, and in this case \([\mathcal{A},\mathcal{B}](F,G)\) is the hom-object of the \(\mcatW\)-category of all \(\mcatW\)-functors \(\mathcal{A}\to \mathcal{B}\), moreover, this construction defines the right adjoint to
\begin{displaymath}
  -\otimes \mathcal{A}\colon\Cats{\mcatW}\to\Cats{\mcatW}
  \quad\text{and}\quad
  -\otimes \mathcal{A}\colon\CATs{\mcatW}\to\CATs{\mcatW}.
\end{displaymath}

\begin{example}
  For \(\mcatW=\mcatV\) a quantale and for \(\mcatV\)-functors \(f,g \colon X\to Y\),
  \begin{displaymath}
    [X,Y](f,g)=\bigwedge_{x\in X}Y(fx,gx).
  \end{displaymath}
\end{example}

\begin{example}
  For \(\mcatW=\Set\) and for functors \(F,G \colon\catA\to\catX\) (with \(\catA\) small), \([\catA,\catX](F,G)\) is the set of all natural transformations \(F\to G\).
\end{example}

\begin{example}
  For \(\mcatW=\SetV\) and for \(\mcatV\)-normed functors \(F,G \colon\ncatA\to\ncatX\) (with \(\ncatA\) small), from Proposition~\ref{d:prop:1} we obtain that \([\ncatA,\ncatX](F,G)\) is the set of \emph{all} natural transformations \(\alpha \colon F\to G\), normed by
  \begin{displaymath}
    |\alpha|=\bigwedge_{A\in\ob\ncatA}|\alpha_A|.
  \end{displaymath}
\end{example}

\subsection{Enriched distributors}
\label{sec:enrich-distr}

We assume here that \(\mcatW\) is a closed symmetric monoidal complete and cocomplete category. For \(\mcatW\)-categories \(\mathcal{A}\) and \(\mathcal{B}\), a \emph{\(\mcatW\)-distributor} (also \(\mcatW\)-profunctor or \(\mcatW\)-bimodule) \(\Phi \colon \mathcal{A}\modto \mathcal{B}\) is a \(\mcatW\)-functor
\begin{displaymath}
  \Phi \colon \mathcal{A}^{\op}\otimes \mathcal{B}
  \longrightarrow\mcatW.
\end{displaymath}
In particular, for a \(\mcatW\)-functor \(F \colon \mathcal{A}\to \mathcal{B}\), we have the \(\mcatW\)-distributors \(F_{*}\colon \mathcal{A}\modto \mathcal{B}\) and \(F^{*}\colon \mathcal{B}\modto \mathcal{A}\) defined by
\begin{displaymath}
  \mathcal{A}^{\op}\otimes \mathcal{B}
  \xrightarrow{\quad F^{\op}\otimes 1\quad}
  \mathcal{B}^{\op}\otimes \mathcal{B}
  \xrightarrow{\quad\mathcal{B}(-,-)\quad}
  \mcatW
\end{displaymath}
and
\begin{displaymath}
  \mathcal{B}^{\op}\otimes \mathcal{A}
  \xrightarrow{\quad 1\otimes F\quad}
  \mathcal{B}^{\op}\otimes \mathcal{B}
  \xrightarrow{\quad\mathcal{B}(-,-)\quad}
  \mcatW,
\end{displaymath}
respectively. We write \(\Dists{\mcatW}(\mathcal{A},\mathcal{B})\) for the category of \(\mcatW\)-distributors \(\mathcal{A}\modto \mathcal{B}\) and \(\mcatW\)-natural transformations between them.

\begin{remark}
  \label{d:rem:1}
  Recall from Subsection~\ref{sec:change-base} that every symmetric lax monoidal functor \(\Gamma \colon\mcatW\to\mcatU\) induces the change-of-base functor \(\cb{\Gamma}\colon\Cats{\mcatW}\to\Cats{\mcatU}\). Now we show that this correspondence can to some extent be lifted to distributors. For \(\mcatW\)-categories \(\mathcal{A}\) and \(\mathcal{B}\), we define the functor
  \begin{displaymath}
    \db{\Gamma}\colon\Dists{\mcatW}(\mathcal{A},\mathcal{B})
    \longrightarrow
    \Dists{\mcatU}
    (\cb{\Gamma}(\mathcal{A}),\cb{\Gamma}(\mathcal{B}))
  \end{displaymath}
  which sends \(\Phi \colon \mathcal{A}^{\op}\otimes \mathcal{B}\to\mcatW\) to the \(\mcatU\)-distributor \(\db{\Gamma}(\Phi)=\gamma\,\cb{\Gamma}(\Phi)\,\tau\) (noting that \(\cb{\Gamma}(\mathcal{A}^{\op})=\cb{\Gamma}(\mathcal{A})^{\op}\)),
  \begin{displaymath}
    \begin{tikzcd}
      \cb{\Gamma}(\mathcal{A})^{\op}\otimes\cb{\Gamma}(\mathcal{B})
      \ar{r}{\tau}\ar[bend left=20]{rrrr}{\db{\Gamma}(\Phi)}
      & \cb{\Gamma}(\mathcal{A}^{\op}\otimes \mathcal{B})
      \ar{r}{\cb{\Gamma}(\Phi)}
      & \cb{\Gamma}(\mcatW)
      \ar{rr}{\gamma}[swap]{\text{Theorem~\ref{d:thm:1}}}
      && \mcatU
    \end{tikzcd}
  \end{displaymath}
  that is, \(\db{\Gamma}(\Phi)(A,B)=\Gamma(\Phi(A,B))\), and \(\db{\Gamma}\) sends \(\alpha \colon\Phi\to\Psi\) in \(\Dists{\mcatW}(\mathcal{A},\mathcal{B})\) to
  \begin{displaymath}
    \db{\Gamma}(\alpha)=\gamma\,\cb{\Gamma}(\alpha)\,\tau
    \colon\db{\Gamma}(\Phi) \longrightarrow\db{\Gamma}(\Psi),
  \end{displaymath}
  here functoriality of \(\db{\Gamma}\) follows from the interchange laws of Subsection~\ref{sec:enrich-categ-funct}. One easily verifies that
  \begin{displaymath}
    (\cb{\Gamma}F)_{*}=\db{\Gamma}(F_{*})
    \quad\text{and}\quad
    (\cb{\Gamma} F)^{*}=\db{\Gamma}(F^{*}),
  \end{displaymath}
  for every \(\mcatW\)-functor \(F \colon \mathcal{A}\to \mathcal{B}\).
\end{remark}

Given also \(\Psi \colon \mathcal{B}\modto \mathcal{C}\), their \emph{composite} \(\Psi\cdot\Phi \colon \mathcal{A}\modto \mathcal{C}\) is defined by
\begin{displaymath}
  \Psi\cdot\Phi(A,C)=\int^{B\in\ob \mathcal{B}}\Psi(B,C)\otimes\Phi(A,B),
\end{displaymath}
whenever this coend exists. The coend above certainly exists if \(\mathcal{B}\) is small. But for any \(\mcatW\)-functors \(F \colon \mathcal{A}\to\mathcal{B}\) and \(G \colon \mathcal{C}\to \mathcal{B}\) one easily verifies that the following coends exist and are computed as
\begin{displaymath}
  \Psi\cdot F_{*}(A,C)
  =\int^{B\in\ob \mathcal{B}}\Psi(B,C)\otimes F_{*}(A,B)\cong \Psi(FA,C)
\end{displaymath}
and
\begin{displaymath}
  G^{*}\cdot\Psi(A,C)
  =\int^{B\in\ob \mathcal{B}}G^{*}(B,C)\otimes\Phi(A,B)\cong\Phi(A,GC).
\end{displaymath}
We obtain the bicategory
\begin{displaymath}
  \Dists{\mcatW}
\end{displaymath}
of small \(\mcatW\)-categories, \(\mcatW\)-distributors and \(\mcatW\)-natural transformations. The identity arrow at a (small) \(\mcatW\)-category \(\mathcal{A}\) is given by \(\mathcal{A}\colon \mathcal{A}\modto \mathcal{A}\).

\begin{example}
  For a quantale \(\mcatW=\mcatV\) and \(\mcatV\)-distributors \(\varphi \colon A\modto B\) and \(\psi \colon B\modto C\), their composite \(\psi\cdot\varphi \colon A\modto C\) is given by
  \begin{displaymath}
    (\psi\cdot\varphi)(a,c)=\bigvee\{\psi(b,c)\otimes\varphi(a,b)\mid b\in B\},
  \end{displaymath}
  for all \(a\in A\) and \(c\in C\).
\end{example}

\begin{example}
  For \(\mcatW=\Set\) and for distributors \(\Phi \colon\catA\modto\catB\) and \(\Psi \colon\catB\modto\catC\) between small categories, their composite \(\Psi\cdot\Phi \colon\catA\modto\catC\) is given by
  \begin{displaymath}
    (\Psi\cdot\Phi)(a,c)
    =\quot{\sum_{B\in\ob\catB}(\Psi(b,c)\times\Phi(a,b))}{\sim},
  \end{displaymath}
  where the equivalence relation \(\sim\) on \(\sum_{B\in\ob\catB}(\Psi(b,c)\times\Phi(a,b))\) is generated by all pairs
  \begin{displaymath}
    (v,\Phi(1_A,h)(u))\sim(\Psi(h,1_C)(v),u)
  \end{displaymath}
  with \(h \colon b\to b'\), \(v\in\Psi(b',c)\) and \(u\in\Phi(a,b)\). For more information we refer to \citep[Section~7.8]{Bor94}.
\end{example}

For a small \(\mcatW\)-category \(\mathcal{A}\), we may identify the category \([\mathcal{A},\mcatW]_\circ\) with the category of \(\mcatW\)-distributors of type \(\mathcal{E}\modto\mathcal{A}\) and of \(\mcatW\)-natural transformations between them; dually, \([\mathcal{A}^{\op},\mcatW]_\circ\) may be identified with the category of \(\mcatW\)-distributors of type \(\mathcal{A}\modto\mathcal{E}\) and of \(\mcatW\)-natural transformations between them. Furthermore, to every \(\mcatW\)-functor \(\Phi \colon \mathcal{A}\to \mcatW\) one associates the \(\mcatW\)-functor \(\Phi^{\vee}\colon \mathcal{A}^{\op}\to \mcatW\), called the \emph{Isbell conjugate} of \(\Phi\) (introduced by \citet{Isb60}, for more information see also \citep{AL21, Bae23}), defined by
\begin{displaymath}
  \Phi^{\vee}(A)
  =[\mathcal{A},\mcatW](\Phi,\mathcal{A}(A,-))
  =\int_{B\in\ob\mathcal{A}}[\Phi(B),\mathcal{A}(A,B)].
\end{displaymath}
Dually, the Isbell conjugate of \(\Psi \colon \mathcal{A}^{\op}\to \mcatW\) is the \(\mcatW\)-functor \(\Phi^{\vee}\colon \mathcal{A}\to \mcatW\) defined by
\begin{displaymath}
  \Psi^{\vee}
  =[\mathcal{A}^{\op},\mcatW](\Psi,\mathcal{A}(-,A))
  =\int_{B\in\ob\mathcal{A}}[\Psi(B),\mathcal{A}(B,A)].
\end{displaymath}
These constructions define a \(\mcatW\)-enriched adjunction
\begin{displaymath}
  \begin{tikzcd}[column sep=huge]
    {[\mathcal{A},\mcatW]^{\op}} %
    \ar[shift left, start anchor=east, end anchor=west, bend left=25, ""{name=U,below}]%
    {r}{(-)^{\vee}} %
    & {[\mathcal{A}^{\op},\mcatW]} %
    \ar[start anchor=west,end anchor=east,shift left,bend left=25, ""{name=D,above}] %
    {l}{(-)^{\vee}}
    \ar[from=U,to=D,symbol=\vdash]
  \end{tikzcd}
\end{displaymath}
since, for all \(\Phi \colon \mathcal{A}\to \mcatW\) and \(\Psi \colon \mathcal{A}^{\op}\to \mcatW\), we observe that
\begin{displaymath}
  [\mathcal{A}^{\op},\mcatW](\Psi,\Phi^{\vee})
  \cong
  [\mathcal{A}^{\op}\otimes \mathcal{A},\mcatW]
  (\Phi\cdot\Psi,\mathcal{A})
  \cong
  [\mathcal{A},\mcatW](\Phi,\Psi^{\vee}),
\end{displaymath}
naturally in \(\Phi\) and \(\Psi\). For instance, to see the first isomorphism, we calculate
\begin{align*}
  [\mathcal{A}^{\op},\mcatW](\Psi,\Phi^{\vee})
  &\cong\int_{A\in\ob \mathcal{A}^{\op}}
    \left[\Psi(A),\int_{B\in\ob \mathcal{A}}
    [\Phi(B),\mathcal{A}(A,B)]\right]\\
  &\cong\int_{A\in\ob \mathcal{A}^{\op}}\int_{B\in\ob \mathcal{A}}
    [\Psi(A)\otimes\Phi(B),\mathcal{A}(A,B)]\\
  &\cong\int_{(A,B)\in\ob(\mathcal{A}^{\op}\otimes \mathcal{A})}
    [\Phi(B)\otimes\Psi(A),\mathcal{A}(A,B)]\\
  &\cong
    [\mathcal{A}^{\op}\otimes\mathcal{A},\mcatW]
    (\Phi\cdot\Psi,\mathcal{A}).
\end{align*}
In particular, considering \(\Psi=\Phi^{\vee}\) and the unit \(E\to[\mathcal{A}^{\op},\mcatW](\Phi^{\vee},\Phi^{\vee})\), one obtains a \(\mcatW\)-natural transformation \(\varepsilon_{\Phi} \colon\Phi\cdot\Phi^{\vee}\to \mathcal{A}\) which exhibits \(\Phi^{\vee}\) as a right Kan lift of \(\mathcal{A}\colon \mathcal{A}\modto \mathcal{A}\) through \(\Phi \colon \mathcal{E}\modto \mathcal{A}\) in \(\Dists{\mcatW}\)
\begin{displaymath}
  \begin{tikzcd}
    \mathcal{A} &&
    \mathcal{A}\ar[distrib,""{name=D,above}]{ll}[swap]{\mathcal{A}}
    \ar[distrib,dotted]{dl}{\Phi^{\vee}}\\
    & \mathcal{E}\ar[distrib]{ul}{\Phi}
    \ar[to=D,shorten <=4pt,shorten >=6pt,Rightarrow]{}{\varepsilon_{\Phi}}
  \end{tikzcd}
\end{displaymath}
since the underlying equivalence functor
\begin{displaymath}
  [\mathcal{A}^{\op},\mcatW](\Psi,\Phi^{\vee})_{\circ}
  \longrightarrow
  [\mathcal{A}^{\op}\otimes \mathcal{A},\mcatW]_{\circ}
\end{displaymath}
sends \(\alpha \colon \Psi\to\Phi^{\vee}\) to \(\varepsilon_{\Phi} (\Phi\alpha)\).

\begin{proposition}
  Let \(\mathcal{A}\) be a small \(\mcatW\)-category. A \(\mcatW\)-distributor \(\Phi \colon \mathcal{E}\modto \mathcal{A}\) is left adjoint in \(\Dists{\mcatW}\) if and only if \(\Phi\dashv\Phi^{\vee}\) in \(\Dists{\mcatW}\), with counit \(\varepsilon_{\Phi}\). Dually, a \(\mcatW\)-distributor \(\Psi \colon \mathcal{A}\modto \mathcal{E}\) is right adjoint in \(\Dists{\mcatW}\) if and only if \(\Psi^{\vee}\dashv\Psi\), with the counit defined similarly.
\end{proposition}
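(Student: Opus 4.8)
The statement singles out, among all conceivable right adjoints of \(\Phi\), the Isbell conjugate \(\Phi^{\vee}\) as \emph{the} one (with a named counit), so the plan is to combine the fact already established above --- that \(\Phi^{\vee}\) together with \(\varepsilon_{\Phi}\) is the right Kan lift of the identity distributor \(\mathcal{A}\colon\mathcal{A}\modto\mathcal{A}\) through \(\Phi\) --- with the standard bicategorical principle that a right adjoint is always such a right Kan lift, and finally with the uniqueness of Kan lifts. The ``if'' direction is trivial: if \(\Phi\dashv\Phi^{\vee}\) then \(\Phi\) is left adjoint by definition.

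For the ``only if'' direction, suppose \(\Phi\dashv\Psi\) in \(\Dists{\mcatW}\) for some distributor \(\Psi\colon\mathcal{A}\modto\mathcal{E}\), with unit \(\eta\colon\mathcal{E}\to\Psi\cdot\Phi\) and counit \(\varepsilon\colon\Phi\cdot\Psi\to\mathcal{A}\). The one genuinely computational step is the formal lemma that \((\Psi,\varepsilon)\) exhibits \(\Psi\) as a right Kan lift of \(\mathcal{A}\) through \(\Phi\): given any \(\Theta\colon\mathcal{A}\modto\mathcal{E}\) and any \(\sigma\colon\Phi\cdot\Theta\to\mathcal{A}\), I would set \(\bar{\sigma}=(\Psi\sigma)(\eta\Theta)\colon\Theta\to\Psi\) and verify \(\varepsilon(\Phi\bar{\sigma})=\sigma\) using the interchange law (naturality of \(\varepsilon\) against \(\sigma\)) and the triangle identity \((\varepsilon\Phi)(\Phi\eta)=1_{\Phi}\); uniqueness of \(\bar{\sigma}\) follows symmetrically from the other triangle identity \((\Psi\varepsilon)(\eta\Psi)=1_{\Psi}\). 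This is precisely the well-known argument that right adjoints are right Kan lifts, carried out in the bicategory \(\Dists{\mcatW}\).

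Since \((\Phi^{\vee},\varepsilon_{\Phi})\) is also a right Kan lift of \(\mathcal{A}\) through \(\Phi\), the universal property provides a unique isomorphism \(\theta\colon\Psi\to\Phi^{\vee}\) of distributors compatible with the counits, that is, with \(\varepsilon_{\Phi}(\Phi\theta)=\varepsilon\). Transporting the adjunction \(\Phi\dashv\Psi\) along \(\theta\) yields an adjunction \(\Phi\dashv\Phi^{\vee}\) whose counit is \(\varepsilon(\Phi\theta^{-1})=\varepsilon_{\Phi}\), so that the counit is identified on the nose and not merely up to isomorphism, exactly as asserted. The dual statement is handled by the horizontally dual argument: here \(\Psi^{\vee}\), with the counit \(\varepsilon'_{\Psi}\colon\Psi^{\vee}\cdot\Psi\to\mathcal{A}\) read off from the Isbell adjunction, is the right Kan \emph{extension} of \(\mathcal{A}\) along \(\Psi\), and the dual formal lemma shows that a left adjoint of \(\Psi\), should one exist, is such a right Kan extension; uniqueness then forces it to be \(\Psi^{\vee}\), giving \(\Psi^{\vee}\dashv\Psi\). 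Alternatively, this case follows from the first by passing to the canonical involution \((-)^{\circ}\) on \(\Dists{\mcatW}\) that sends \(\mathcal{A}\mapsto\mathcal{A}^{\op}\) and reverses distributors, interchanging left and right adjoints while matching up the two flavours of Isbell conjugate.

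The main obstacle I anticipate is bookkeeping rather than conceptual: keeping the whiskerings and the two opposite composites \(\Phi\cdot\Psi\) and \(\Psi\cdot\Phi\) straight, applying the two triangle identities in the correct places, and --- most importantly --- confirming that the isomorphism furnished by the Kan-lift universal property is the one compatible with \(\varepsilon_{\Phi}\), which is what allows the counit of the resulting adjunction to be pinned down as \(\varepsilon_{\Phi}\) itself.
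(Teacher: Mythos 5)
Your proof is correct. Note, however, that the paper does not actually prove this proposition inline: its entire ``proof'' is a citation to \citep[Proposition~10.4]{AL21}. So strictly speaking you have supplied the argument the paper outsources, and the route you take is exactly the one the paper's preceding discussion sets up: the text just before the proposition establishes that \(\varepsilon_{\Phi}\) exhibits \(\Phi^{\vee}\) as the right Kan lift of the identity distributor \(\mathcal{A}\colon\mathcal{A}\modto\mathcal{A}\) through \(\Phi\), and you combine this with the standard bicategorical facts that (i) a right adjoint, equipped with the counit, is always such a right Kan lift (your computation via the triangle identities and interchange is the usual one and is correct), and (ii) Kan lifts are unique up to a unique isomorphism compatible with the universal 2-cells, which is what lets you transport the adjunction so that the counit becomes \(\varepsilon_{\Phi}\) on the nose rather than up to isomorphism. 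This is also, in substance, how the cited result in \citep{AL21} is proved, so there is no genuine divergence in method. Two minor points worth tightening if you write this up fully: first, in the dual case your appeal to the involution \((-)^{\op}\) of Remark~\ref{w:rem:1} requires the (routine but not stated) check that \((\Phi^{\op})^{\vee}\cong(\Phi^{\vee})^{\op}\), i.e.\ that op-duality really interchanges the two flavours of Isbell conjugation; second, since \(\Dists{\mcatW}\) is a bicategory rather than a strict 2-category, the identifications \(\Psi\cdot\mathcal{A}\cong\Psi\) and the associativity constraints enter your computation of \(\bar{\sigma}=(\Psi\sigma)(\eta\Theta)\), and these unitors/associators should be acknowledged (your ``bookkeeping'' caveat covers this, and none of it causes any real difficulty).
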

\begin{proof}
  See \citep[Proposition~10.4]{AL21}.
\end{proof}

\section{Completeness \emph{à la} Lawvere}
\label{sec:compl-la-lawv}

Throughout this section \(\mcatW\) denotes a closed symmetric monoidal complete and cocomplete category.

\subsection{Lawvere completeness for enriched categories}
\label{sec:general-notion}

We start by recalling the well-known key property of the \(\mcatW\)-distributors induced by a \(\mcatW\)-functor.

\begin{proposition}
  \label{d:prop:3}
  For every \(\mcatW\)-functor \(F \colon \mathcal{A}\to \mathcal{B}\) between small \(\mcatW\)-categories, \(F_{*}\dashv F^{*}\) in \(\Dists{\mcatW}\).
\end{proposition}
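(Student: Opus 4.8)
The plan is to establish the adjunction $F_{*}\dashv F^{*}$ directly in the bicategory $\Dists{\mcatW}$ by exhibiting a unit and a counit $2$-cell and checking the two triangle identities, relying on the coend description of distributor composition, the $\mcatW$-functoriality of $F$, and the unit and associativity axioms of $\mathcal{B}$.

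First I would record the two composites. Applying the isomorphism $\Psi\cdot F_{*}(A,C)\cong\Psi(FA,C)$ from Subsection~\ref{sec:enrich-distr} with $\Psi=F^{*}$ gives
\[
  (F^{*}\cdot F_{*})(A,A')\cong F^{*}(FA,A')=\mathcal{B}(FA,FA'),
\]
so $F^{*}\cdot F_{*}$ is the hom of $\mathcal{B}$ restricted along $F$ in both variables. On the other hand,
\[
  (F_{*}\cdot F^{*})(B,B')=\int^{A\in\ob\mathcal{A}}\mathcal{B}(FA,B')\otimes\mathcal{B}(B,FA)
\]
stays a genuine coend, since $F$ occurs in both tensor factors (it collapses to $\mathcal{B}(B,B')$ exactly when $F$ is fully faithful, which is the classical reason the counit need not be invertible).

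For the unit $\eta\colon\mathcal{A}\to F^{*}\cdot F_{*}$ I would take as component at $(A,A')$ the $\mcatW$-functor action
\[
  \mathcal{A}(A,A')\xrightarrow{\;F_{A,A'}\;}\mathcal{B}(FA,FA')\cong(F^{*}\cdot F_{*})(A,A'),
\]
whose compatibility with the left and right $\mathcal{A}$-actions is precisely the compatibility of $F$ with composition. For the counit $\varepsilon\colon F_{*}\cdot F^{*}\to\mathcal{B}$ I would invoke the universal property of the coend: the family
\[
  \mathcal{B}(FA,B')\otimes\mathcal{B}(B,FA)\xrightarrow{\;\comp\;}\mathcal{B}(B,B')
\]
is $\mcatW$-natural in $A$ because composition in $\mathcal{B}$ is associative, so it factors uniquely through $(F_{*}\cdot F^{*})(B,B')$ to define $\varepsilon_{B,B'}$; that $\varepsilon$ is a morphism of distributors again reduces to associativity of $\comp$. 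In both cases one must also check that $\eta$ and $\varepsilon$ are genuine $\mcatW$-natural transformations between the respective $\mcatW$-functors into $\mcatW$.

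The remaining step, which I expect to be the main obstacle, is the verification of the triangle identities
\[
  (\varepsilon F_{*})(F_{*}\eta)=1_{F_{*}}
  \qquad\text{and}\qquad
  (F^{*}\varepsilon)(\eta F^{*})=1_{F^{*}},
\]
modulo the associator and unitor coherence isomorphisms of $\Dists{\mcatW}$. The difficulty is not conceptual but bookkeeping: whiskering $\eta$ and $\varepsilon$ by $F_{*}$ and $F^{*}$ has to be tracked through the coend inclusions and the co-Yoneda isomorphisms of the first step, rather than pointwise as one would do over $\Set$. Unwinding either identity, the relevant composite lands in a single coend summand on which $F$ and the composition $\comp$ of $\mathcal{B}$ act in succession, and the identity then follows from the unit axiom of the $\mcatW$-category $\mathcal{B}$ (composition with $\unit$) together with the preservation of identities by $F$. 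To keep this manageable I would carry out the whole argument in coend calculus, using naturality of the co-Yoneda comparison maps to reduce both triangles to the unit coherence diagrams for $\mathcal{B}$ displayed in Subsection~\ref{sec:enrich-categ-funct}.
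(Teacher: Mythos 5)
Your proposal is correct and takes essentially the same route as the paper: the paper states the proposition as a well-known fact and simply records the same unit (the hom-action $F_{A,A'}\colon\mathcal{A}(A,A')\to\mathcal{B}(FA,FA')\cong (F^{*}\cdot F_{*})(A,A')$) and the same counit (composition in $\mathcal{B}$, factored through the coend $\int^{A\in\ob\mathcal{A}}\mathcal{B}(FA,B')\otimes\mathcal{B}(B,FA)$), leaving the naturality and triangle-identity verifications implicit --- exactly the bookkeeping you outline. One parenthetical aside in your write-up is incorrect, though it does not affect the argument: invertibility of the \emph{counit} is not equivalent to full faithfulness of $F$ (full faithfulness characterises invertibility of the \emph{unit}); the counit can fail to be invertible even for fully faithful $F$, e.g.\ the inclusion of one object into a two-object discrete category, and its invertibility is instead a density-type condition on $F$.
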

The unit and the counit of \(F_{*}\dashv F^{*}\) have components
\begin{displaymath}
  \mathcal{A}(A,A')\longrightarrow \mathcal{B}(FA,FA')\cong F^{*}\cdot F_{*}(A,A')
\end{displaymath}
and
\begin{align*}
  F_{*}\cdot F^{*}(B,B')
  &\cong\int^{A\in\ob \mathcal{A}}F_{*}(A,B')\otimes F^{*}(B,A)\\
  &\cong\int^{A\in\ob \mathcal{A}}\mathcal{B}(FA,B')\otimes \mathcal{B}(B,FA)
  \longrightarrow \mathcal{B}(B,B'),
\end{align*}
respectively.
\begin{definition}
  A small \(\mcatW\)-category \(\mathcal{A}\) is called \emph{Lawvere complete} whenever every left adjoint \(\mcatW\)-distributor \(\Phi \colon \mathcal{X}\modto \mathcal{A}\) in \(\Dists{\mcatW}\) is representable, that is, whenever there exists a \(\mcatW\)-functor \(F \colon \mathcal{X}\to \mathcal{A}\) so that \(\Phi\cong F_{*}\).
\end{definition}

We note that, if \(\Phi\cong F_{*}\) and \(\Phi\dashv\Psi\) in \(\Dists{\mcatW}\), then also \(\Psi\cong F^{*}\) by Proposition~\ref{d:prop:3}.

\begin{remark}
  It suffices to consider left adjoints \(\Phi \colon\mathcal{E}\modto \mathcal{A}\) in the definition above. Also recall that an adjunction \(\Phi\dashv\Psi\) in \(\Dists{\mcatW}\) with \(\Psi \colon \mathcal{A}\modto \mathcal{E}\) is given by units \(\eta\) and \(\varepsilon\)
  \begin{align*}
    E=\mathcal{E}(\star,\star)
    &\xrightarrow{\quad\eta=\eta_{\star}\quad}
      \int^{A\in\ob \mathcal{A}}\Psi(A)\otimes\Phi(A)
      =\Psi\cdot\Phi(\star,\star),\\
    \mathcal{A}(A,B)
    &\xleftarrow{\quad\varepsilon_{A,B}\quad}
      \Phi(B)\otimes \Psi(A)=\Phi\cdot\Psi(A,B),
  \end{align*}
  subject to the usual equations. In particular, for \(\mcatW=\mcatV\) a quantale and \((\varphi \colon E\modto A)\dashv(\psi \colon A\modto E)\) in \(\Dists{\mcatV}\), the unit and the counit are given by the inequalities
  \begin{equation}
    \label{d:eq:9}%
    \kk\leq\bigvee\{\psi(c)\otimes\varphi(c)\mid c\in A\}
    \quad\text{and}\quad
    \varphi(b)\otimes\psi(a)\leq X(a,b)\qquad(a,b\in A),
  \end{equation}
  respectively. Also note that we may identify any \(\mcatW\)-functor \(\mathcal{E}\to \mathcal{A}\) with the image \(A\) of the unique object of \(\mathcal{E}\), and we write \(A_{*}\colon \mathcal{E}\modto \mathcal{A}\) and \(A^{*}\colon \mathcal{A}\modto \mathcal{E}\) for the induced \(\mcatW\)-distributors. The left adjoint \(\Phi \colon \mathcal{E}\modto \mathcal{A}\) (and the adjunction \(\Phi\dashv\Psi\)) is representable if and only if there exists an object \(A\) in \(\mathcal{A}\) with \(A_{*}\cong\Phi\) (and necessarily \(A^{*}\cong\Psi\)).
\end{remark}

\begin{remark}\label{w:rem:1}
  The notion of Lawvere completeness is self dual in the sense that \(\mathcal{A}\) is Lawvere complete if and only if \(\mathcal{A}^{\op}\) is Lawvere complete. One way to see this is to observe that, by Remark~\ref{d:rem:3}, the passage \(\mathcal{A}\mapsto \mathcal{A}^{\op}\) extends to a pseudofunctor \((-)^{\op}\colon\Dists{\mcatW}^{\op}\to\Dists{\mcatW}\) which sends \(\Phi \colon \mathcal{A}\modto \mathcal{B}\) to \(\Phi^{\op}\colon \mathcal{B}^{\op}\modto \mathcal{A}^{\op}\) defined as the composite
  \begin{displaymath}
    (\mathcal{B}^{\op})^{\op}\otimes \mathcal{}A^{\op}
    =\mathcal{B}\otimes \mathcal{A}^{\op}
    \longrightarrow \mathcal{A}^{\op}\otimes \mathcal{B}
    \xrightarrow{\quad\Phi\quad}\mcatW,
  \end{displaymath}
  and that \((\Phi^{\op})^{\op}=\Phi\). The pseudofunctor \((-)^{\op}\) sends the adjunction \((\Phi \colon \mathcal{E}\modto \mathcal{A})\dashv(\Psi \colon \mathcal{A}\modto \mathcal{E})\) to the adjunction \((\Psi^{\op}\colon \mathcal{E}= \mathcal{E}^{\op}\modto \mathcal{A}^{\op})\dashv(\Phi^{\op} \colon \mathcal{A}^{\op}\modto \mathcal{E})\), hence left adjoint distributors \(\mathcal{E}\modto \mathcal{A}\) are in bijection with left adjoint distributors \(\mathcal{E}\modto \mathcal{A}^{\op}\). Furthermore, for every \(\mcatW\)-functor \(F \colon \mathcal{A}\to \mathcal{B}\), \((F_{*})^{\op}= (F^{\op})^{*}\) and \((F^{*})^{\op}=(F^{\op})_{*}\). Therefore every left adjoint distributor \(\mathcal{E}\modto \mathcal{A}\) is representable if and only if every left adjoint distributor \(\mathcal{E}\modto \mathcal{A}^{\op}\) is representable.
\end{remark}

\begin{remark}\label{d:rem:2}
  Alternatively, Lawvere completeness of a small \(\mcatW\)-category \(\mathcal{A}\) may be equivalently described as cocompleteness with respect to the class of all right adjoint weights \(\Psi \colon \mathcal{A}\modto \mathcal{E}\) (see \citep{Str83, BD86}). With this description as the definition of Lawvere completeness for a general (not necessarily small) \(\mcatW\)-category \(\mathcal{X}\), we conclude that every cocomplete \(\mcatW\)-category \(\mathcal{X}\) is Lawvere complete; hence, in particular, the \(\mcatW\)-category \(\mcatW\) is Lawvere complete.
\end{remark}

\subsection{Quantale-enriched categories}
\label{sec:quant-enrich-categ}

\begin{proposition}
  \label{d:prop:4}%
  Let \(\mcatV\) be a quantale. A left adjoint \(\mcatV\)-distributor \(\varphi\colon E\modto A\) with right adjoint \(\psi \colon A\modto E\) is representable if and only if there exists an element \(a\in A\) with \(\kk\leq\varphi(a)\) and \(\kk\leq\psi(a)\).
\end{proposition}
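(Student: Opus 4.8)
The plan is to reduce everything to the explicit description of the adjunction data recorded in \eqref{d:eq:9}, together with the observation that, since \(\mcatV\) is thin, ``representable'' means the literal equality \(\varphi=(a_0)_{*}\) for some object \(a_0\in A\), where \((a_0)_{*}=A(a_0,-)\colon\mathcal{E}\modto A\) and \((a_0)^{*}=A(-,a_0)\colon A\modto\mathcal{E}\). I would first recall the functoriality constraints: as a \(\mcatV\)-distributor \(\mathcal{E}\modto A\), the map \(\varphi\) is a \(\mcatV\)-functor \(A\to\mcatV\), so \(A(a,a')\otimes\varphi(a)\le\varphi(a')\) for all \(a,a'\); dually \(\psi\) satisfies \(A(a,a')\otimes\psi(a')\le\psi(a)\).

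For the forward implication, suppose \(\varphi\) is representable, say \(\varphi=(a_0)_{*}=A(a_0,-)\). Then \(\varphi(a_0)=A(a_0,a_0)\ge\kk\) by the unit law of the \(\mcatV\)-category \(A\). Since adjoints in the thin bicategory \(\Dists{\mcatV}\) are uniquely determined and \((a_0)_{*}\dashv(a_0)^{*}\) by Proposition~\ref{d:prop:3}, the given right adjoint must be \(\psi=(a_0)^{*}=A(-,a_0)\), whence \(\psi(a_0)=A(a_0,a_0)\ge\kk\) as well; so \(a=a_0\) witnesses the stated condition.

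For the converse, which is the substantive direction, I would suppose that \(a_0\in A\) satisfies \(\kk\le\varphi(a_0)\) and \(\kk\le\psi(a_0)\), and prove \(\varphi=A(a_0,-)\) by two opposite inequalities. On the one hand, \(\mcatV\)-functoriality of \(\varphi\) gives, for every \(x\in A\),
\begin{displaymath}
  A(a_0,x)=A(a_0,x)\otimes\kk\le A(a_0,x)\otimes\varphi(a_0)\le\varphi(x).
\end{displaymath}
On the other hand, the counit inequality of \eqref{d:eq:9}, instantiated at \(a=a_0\) and \(b=x\), yields
\begin{displaymath}
  \varphi(x)=\varphi(x)\otimes\kk\le\varphi(x)\otimes\psi(a_0)\le A(a_0,x).
\end{displaymath}
Combining these gives \(\varphi(x)=A(a_0,x)\) for all \(x\), i.e.\ \(\varphi=(a_0)_{*}\) is representable.

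The only real care needed is the bookkeeping of variances, i.e.\ applying the monotonicity of \(\varphi\) and the counit of the adjunction to the correct arguments; I expect no genuine obstacle. It is worth flagging that the converse uses only the \(\mcatV\)-functoriality of \(\varphi\) together with the counit, and not the unit inequality, whereas the forward direction is cleanest via uniqueness of adjoints in \(\Dists{\mcatV}\). This short argument is precisely the template that the heavier \(\Set\)- and \(\SetV\)-versions will refine.
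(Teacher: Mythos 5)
Your proof is correct, but its converse direction takes a genuinely different route from the paper's. Both proofs begin the same way: your \(\mcatV\)-functoriality computation \(A(a_0,x)\leq A(a_0,x)\otimes\varphi(a_0)\leq\varphi(x)\) is exactly the paper's Yoneda step \(a_{*}\leq\varphi\) (and dually \(a^{*}\leq\psi\)). The difference lies in how the reverse inequality \(\varphi\leq a_{*}\) is obtained. The paper applies the contravariant Isbell adjunction: since \(\varphi\dashv\psi\) forces \(\varphi=\psi^{\vee}\) and since \(a_{*}=(a^{*})^{\vee}\), conjugating the inequality \(a^{*}\leq\psi\) gives \(a_{*}=(a^{*})^{\vee}\geq\psi^{\vee}=\varphi\). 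You instead use the counit inequality of \eqref{d:eq:9} directly, \(\varphi(x)\otimes\psi(a_0)\leq A(a_0,x)\), together with \(\kk\leq\psi(a_0)\). Your argument is more elementary and self-contained, needing nothing beyond the explicit unit/counit description of the adjunction; the paper's choice of the Isbell route is deliberate, since it is the same scheme that Lemmas~\ref{d:lem:1} and~\ref{d:lem:2} refine in the \(\Set\) and \(\SetV\) settings, where the hom-objects are no longer thin and a pointwise counit inequality is not available in the same form. For this reason your closing remark that your short argument ``is precisely the template that the heavier \(\Set\)- and \(\SetV\)-versions will refine'' is slightly off: those versions refine the Isbell-conjugation argument, not the direct counit computation.
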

\begin{proof}
  A representing element \(a\in A\) for \(\varphi\dashv\psi\) must clearly satisfy \(\kk\leq\varphi(a)\) and \(\kk\leq\psi(a)\). Conversely, by the Yoneda lemma, the conditions \(\kk\leq\varphi(a)\) and \(\kk\leq\psi(a)\) imply \(a_{*}\leq\varphi\) and \(a^{*}\leq\psi\). With the Isbell adjunction one obtains \(a_{*}={a^{*}}^{\vee}\geq\psi^{\vee}=\varphi\), hence \(\varphi=a_{*}\).
\end{proof}

\begin{example}
  Consider the quantale \(\mcatV=\two\) and an adjunction \((\varphi \colon E\modto A)\dashv(\psi \colon A\modto E)\). The first inequality of \eqref{d:eq:9} tells us that there exists \(c\in A\) so that \(1=\varphi(c)\) and \(1=\psi(c)\), that is, that \(\varphi \colon E\modto A\) is representable. Since \(\Cats{\mcatV}=\Ord\), it follows that every ordered set is Lawvere complete. More generally, if the unit \(\kk\) is \emph{totally compact} in any quantale \(\mcatV\) (meaning that \(\kk\leq\bigvee S\) implies \(\kk\leq s\) for some \(s\in S\), for all \(S\subseteq\mcatV\)) and satisfies \(k\leq u\otimes v\) only if \(\kk\leq u\) and \(\kk\leq v\), for all \(u,v\in\mcatV\), then every \(\mcatV\)-category is Lawvere complete.
\end{example}

\begin{example}
  A set, viewed as a \(\one\)-category, is Lawvere complete if and only if it is non-empty.
\end{example}

\begin{example}
  We consider now the quantale \(\mcatV= [0,\infty]\) with addition and unit element \(0\), ordered by \(\geq\), so that \(\Cats{\mcatV}=\Met\) (see Example~\ref{d:ex:4}). As observed in \citep{Law73}, a metric space is Lawvere complete if and only if it is Cauchy complete in the classic sense that every Cauchy sequence converges.

  As usual, for a (generalised) metric space \(X\) we consider the induced (symmetric) topology where, for \(x\in X\) and \(A\subseteq X\),
  \begin{displaymath}
    x\in\overline{A}\iff 0=\inf\{X(x,a)+X(a,x)\mid a\in A\}.
  \end{displaymath}
  Note that \(X\) and \(X^{\op}\) induce the same topology on the set \(X\), and the topology induced by \([0,\infty]\) is the usual Euclidean topology: for \(u\in [0,\infty]\) and \(S\subseteq [0,\infty]\),
  \begin{displaymath}
    u\in\overline{S}
    \iff 0=\inf\{|v-u|\mid v\in S\}.
  \end{displaymath}
  For the proof, assume first that \(X\) is Cauchy complete. Let \((\varphi \colon E\modto X)\dashv(\psi \colon X\modto E)\) be an adjunction in \(\Dists{[0,\infty]}\). We note that both maps \(\varphi \colon X\to [0,\infty]\) and \(\psi \colon X\to [0,\infty]\) are continuous, therefore also the map
  \begin{displaymath}
    \sigma \colon X \longrightarrow [0,\infty],\quad
    x \longmapsto \varphi(x)+\psi(x)
  \end{displaymath}
  is continuous. By \eqref{d:eq:9}, for every \(n\in\mathbb{N}\) we can choose some \(x_n\in X\) so that \(\sigma(x_n)=\varphi(x_n)+\psi(x_n)\leq\frac{1}{n}\), and the sequence \((x_n)_{n\in\mathbb{N}}\) is Cauchy since
  \begin{displaymath}
    X(x_n,x_m)\leq \varphi(x_m)+\psi(x_n)
    \leq \sigma(x_m)+\sigma(x_n)
    \leq \frac{1}{m}+ \frac{1}{n},
  \end{displaymath}
  for all \(n,m\in \mathbb{N}\). Since \(X\) is Cauchy complete, \((x_n)_{n\in\mathbb{N}}\) converges to some \(x\in X\), therefore \((\sigma(x_n))_{n\in\mathbb{N}}\) converges to \(\sigma(x)\), hence \(0=\sigma(x)=\varphi(x)+\psi(x)\). Consequently, \(\varphi(x)=0\) and \(\psi(x)=0\), by Proposition~\ref{d:prop:4}, the adjunction \(\varphi\dashv\psi\) is representable.

  Conversely, assume now that \(X\) is Lawvere complete. We recall from \citep{HT10} that, for the Yoneda embedding \(y_X \colon X\to [X^{\op},[0,\infty]]\),
  \begin{displaymath}
    \overline{y_X(X)}
    =\{\psi\in [X^{\op},[0,\infty]]
    \mid\psi \colon X\modto E\text{ is right adjoint}\}.
  \end{displaymath}
  Since \(X\) is Lawvere complete, \(y_X(X)=\overline{y_X(X)}\) is a closed subset of the Cauchy complete space \([X^{\op},[0,\infty]]\), therefore \(y_X(X)\) is Cauchy complete and, since \(X\cong y_X(X)\), also \(X\) is Cauchy complete.
\end{example}

\subsection{Lawvere completeness for ordinary categories}
\label{sec:ordinary-categories}

If the symmetric monoidal closed category \(\mcatW\) is not thin, then the relation between adjoint distributors and representable distributors is more subtle. We start by recalling the case \(\mcatW=\Set\), see \citep{Bor94, BD86} for more details. Firstly, for \((\Phi\colon\catE\modto\catA)\dashv(\Psi\colon\catA\modto\catE)\) in \(\Dist\), the unit
\begin{displaymath}
  \eta \colon \catE \longrightarrow
  \left(\quot{\sum_{a\in\ob\catA}(\Psi(a)\times\Phi(a))}{\sim}\right)
\end{displaymath}
of this adjunction is given by an equivalence class \([(v,u)]\), for some object \(a\) in \(\catA\) and elements \(u\in\Phi(a)\) and \(v\in\Psi(a)\), and we write then \(\eta=[(v,u)]\). By the proof of \citep[Proposition~7.9.2]{Bor94}, we have the following
\begin{lemma}\label{d:lem:3}
  For a small category \(\catA\) and distributors \(\Phi\colon\catE\modto\catA\) and \(\Psi\colon\catA\modto\catE\), an adjunction \(\Phi\dashv\Psi\) in \(\Dist\) is given by
  \begin{itemize}
  \item a family of maps \(\varepsilon_{a,b}\colon\Phi(b)\times\Psi(a)\to\catA(a,b)\), natural in \(a\) and \(b\),
  \item an object \(c\) of \(\catA\) and elements \(u\in\Phi(c)\) and \(v\in\Psi(c)\)
  \end{itemize}
  so that, for all \(a,b\in\ob\catA\), \(y\in\Phi(b)\) and \(x\in\Psi(a)\),
  \begin{displaymath}
    x=\Psi(\varepsilon_{a,c}(u,x))(v)
    \quad\text{and}\quad
    y=\Phi(\varepsilon_{c,b}(y,v))(u).
  \end{displaymath}
  Then the unit \(\eta\) is given by \(\eta=[(v,u)]\) and the counit \(\varepsilon\) is defined by the family \((\varepsilon_{a,b})_{a,b\in\ob\catA}\).
\end{lemma}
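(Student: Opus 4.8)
The plan is to translate the bicategorical adjunction $\Phi\dashv\Psi$ in $\Dist$ into elementwise data matching the two displayed equations. First I would pin down the concrete shape of the unit and counit (with $\otimes=\times$ and $E=\{\star\}$, since $\mcatW=\Set$). The counit is a $\Set$-natural transformation $\varepsilon\colon\Phi\cdot\Psi\to\catA$; the composite $\Phi\cdot\Psi\colon\catA\modto\catA$ is a coend over the one-morphism category $\catE$, whose only arrow is an identity, so it carries no identifications and $(\Phi\cdot\Psi)(a,b)=\Phi(b)\times\Psi(a)$. Thus $\varepsilon$ is nothing but a family $\varepsilon_{a,b}\colon\Phi(b)\times\Psi(a)\to\catA(a,b)$, natural in $a$ and $b$. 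The unit is a $\Set$-natural transformation $\eta\colon\catE\to\Psi\cdot\Phi$, and here $(\Psi\cdot\Phi)(\star,\star)=\int^{a}\Psi(a)\times\Phi(a)$ is a genuine coend over $\catA$; hence $\eta$ selects a single class, which I record as $[(v,u)]$ and present through a representative, namely an object $c$ with $u\in\Phi(c)$ and $v\in\Psi(c)$.

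With this dictionary fixed, I would unfold the two triangle identities, written as the composites $\Phi\xrightarrow{\Phi\cdot\eta}\Phi\cdot\Psi\cdot\Phi\xrightarrow{\varepsilon\cdot\Phi}\Phi$ and $\Psi\xrightarrow{\eta\cdot\Psi}\Psi\cdot\Phi\cdot\Psi\xrightarrow{\Psi\cdot\varepsilon}\Psi$, each required to be the identity. Computing on elements with the coend composition formula, the first composite sends $y\in\Phi(b)$, via $\Phi\cdot\eta$, to the class of $(y,v,u)$ in $\int^{a}\Phi(b)\times\Psi(a)\times\Phi(a)$, on which $\varepsilon\cdot\Phi$ first forms the morphism $\varepsilon_{c,b}(y,v)\colon c\to b$ and then reabsorbs the leftover factor $u\in\Phi(c)$ through the canonical action $\int^{a}\catA(a,b)\times\Phi(a)\to\Phi(b)$, yielding $\Phi(\varepsilon_{c,b}(y,v))(u)$. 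Dually the second composite sends $x\in\Psi(a)$ to $\Psi(\varepsilon_{a,c}(u,x))(v)$ via the action $\int^{a'}\Psi(a')\times\catA(a,a')\to\Psi(a)$. Hence the two triangle identities are precisely
\[
  y=\Phi(\varepsilon_{c,b}(y,v))(u)\qquad\text{and}\qquad x=\Psi(\varepsilon_{a,c}(u,x))(v),
\]
for all $a,b$ and $y\in\Phi(b)$, $x\in\Psi(a)$, which are the two equations of the statement. Since these whiskered composites are well-defined maps out of the coends, their values do not depend on the chosen representative of $[(v,u)]$, so requiring the equations for some triple $(c,u,v)$ is unambiguous.

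For the converse I would read the computation backwards: starting from a natural family $(\varepsilon_{a,b})$ and a triple $(c,u,v)$ obeying the two equations, I set $\varepsilon:=(\varepsilon_{a,b})$, which is a legitimate $2$-cell $\Phi\cdot\Psi\Rightarrow\catA$ precisely because the family is natural, and $\eta:=[(v,u)]$, which is automatically a $2$-cell $\catE\Rightarrow\Psi\cdot\Phi$ (an element of the coend). The same coend computation then shows the two equations are exactly the two triangle identities, so $\Phi\dashv\Psi$ with unit $\eta=[(v,u)]$ and counit $\varepsilon$ given by $(\varepsilon_{a,b})$, as claimed.

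The argument is bookkeeping rather than invention, and I expect the only genuinely delicate step to be the evaluation of the whiskered $2$-cells $\varepsilon\cdot\Phi$ and $\Psi\cdot\varepsilon$ on elements: one must keep the two variances of $\Phi$ and $\Psi$ straight, correctly identify on which tensor factor of the triple coend $\Phi\cdot\Psi\cdot\Phi$ (resp.\ $\Psi\cdot\Phi\cdot\Psi$) the counit $\varepsilon_{a,b}$ acts, and check that the residual representable factor is reassembled by the canonical $\catA$-action. This is exactly the content of the cited computation in the proof of \citep[Proposition~7.9.2]{Bor94}.
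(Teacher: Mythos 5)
Your proof is correct and is essentially the same argument the paper relies on: the paper proves this lemma simply by citing the proof of \citep[Proposition~7.9.2]{Bor94}, and that cited proof consists of exactly the unfolding you carry out — the counit as a natural family (the coend over \(\catE\) being trivial), the unit as a class \([(v,u)]\) in \(\int^{a}\Psi(a)\times\Phi(a)\), and the two triangle identities evaluated elementwise via the co-Yoneda isomorphisms \(\catA\cdot\Phi\cong\Phi\) and \(\Psi\cdot\catA\cong\Psi\). Your additional remark that the equations are independent of the chosen representative of the unit class is a worthwhile point that the paper leaves implicit.
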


Furthermore, also the following result is essentially contained in the proof of \citep[Proposition~7.9.2]{Bor94}. Here we shorten the proof using the Isbell conjugation.

\begin{lemma}
  \label{d:lem:1}
  Let \(\Phi \colon\catE\modto\catA\) in \(\Dist\). Consider an object \(a\) of \(\catA\) and natural transformations
  \begin{equation}
    \label{d:eq:6}
    \alpha \colon\catA(a,-)\longrightarrow\Phi
    \quad\text{and}\quad
    \beta \colon\Phi \longrightarrow\catA(a,-).
  \end{equation}
  By the Yoneda Lemma, \(\alpha\) corresponds to an element \(u\in\Phi(a)\) and \(\beta^{\vee}\colon\catA(-,a)\to\Phi^{\vee}\) to \(v=\beta\in\Phi^{\vee}(a)\). Then the following assertions are equivalent:
  \begin{tfae}
  \item\label{d:item:1} \(\Phi\dashv\Phi^{\vee}\) with unit \(\eta=[(v,u)]\),
  \item\label{d:item:2} \(\alpha\beta=1_{\Phi}\).
  \end{tfae}
\end{lemma}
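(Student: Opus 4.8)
The plan is to reduce everything to Lemma~\ref{d:lem:3} after making its data explicit in the present situation. Identifying the distributor \(\Phi \colon \catE \modto \catA\) with the copresheaf \(\Phi \in [\catA, \Set]\), its Isbell conjugate is \(\Phi^{\vee}(p) = \Nat(\Phi, \catA(p,-))\), and the canonical counit \(\varepsilon_{\Phi} \colon \Phi \cdot \Phi^{\vee} \to \catA\) provided by the Isbell adjunction is computed, via the (trivial) coend over \(\catE\), as the evaluation family
\begin{displaymath}
  \varepsilon_{p,q} \colon \Phi(q) \times \Phi^{\vee}(p) \longrightarrow \catA(p,q), \qquad
  (y, \theta) \longmapsto \theta_q(y).
\end{displaymath}
First I would therefore instantiate Lemma~\ref{d:lem:3} with \(\Psi = \Phi^{\vee}\), this counit \(\varepsilon = \varepsilon_{\Phi}\), representing object \(c = a\), and the elements \(u \in \Phi(a)\) (corresponding to \(\alpha\)) and \(v = \beta \in \Phi^{\vee}(a)\). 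With these choices, assertion \ref{d:item:1} is exactly the statement that both of the lemma's equations hold, so the task becomes translating those two equations.

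For the second equation I would use the Yoneda description \(\alpha_q(f) = \Phi(f)(u)\). Since \(\varepsilon_{a,q}(y, \beta) = \beta_q(y)\), the right-hand side \(\Phi(\varepsilon_{a,q}(y,v))(u)\) equals \(\Phi(\beta_q(y))(u) = \alpha_q(\beta_q(y)) = (\alpha\beta)_q(y)\). Hence the second equation \(y = \Phi(\varepsilon_{a,q}(y,v))(u)\), quantified over all \(q\) and \(y \in \Phi(q)\), says precisely that \(\alpha\beta = 1_{\Phi}\), which is \ref{d:item:2}.

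For the first equation I would compute \(\varepsilon_{p,a}(u,x) = x_a(u) =: g \in \catA(p,a)\) and observe, again by Yoneda, that \(x \circ \alpha = \catA(g,-)\), since \((x\alpha)_a(1_a) = x_a(u) = g\). Consequently \(\Phi^{\vee}(g)(v) = \catA(g,-)\circ\beta = x\circ\alpha\circ\beta\), so the first equation \(x = \Phi^{\vee}(\varepsilon_{p,a}(u,x))(v)\) reads \(x = x\circ(\alpha\beta)\), for all \(p\) and \(x \in \Phi^{\vee}(p)\); this is manifestly implied by \ref{d:item:2}. Putting the pieces together: \ref{d:item:2} forces both equations, whence \ref{d:item:1} by Lemma~\ref{d:lem:3}, while conversely \ref{d:item:1} delivers both equations and the second of them is \ref{d:item:2}. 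The one step requiring care is the explicit identification of the Isbell counit with evaluation and the recognition that \(x_a(u)\) is the Yoneda datum of \(x\alpha\), which is what collapses the \emph{a priori} separate first equation into a mere consequence of \(\alpha\beta = 1_{\Phi}\).
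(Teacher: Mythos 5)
Your proof is correct, but it takes a genuinely different route from the paper's. The paper treats the two directions asymmetrically: for \ref{d:item:1}$\Rightarrow$\ref{d:item:2} it identifies $\alpha$ and $\beta$ with the transformations $\delta$ and $\gamma$ occurring in Borceux's proof of \citep[Proposition~7.9.2]{Bor94}, and for \ref{d:item:2}$\Rightarrow$\ref{d:item:1} it splits the idempotent $e\colon a\to a$ corresponding to $\beta\alpha$ in $[\catA^{\op},\Set]$, obtains a right adjoint $\Psi$ from that splitting, and only afterwards identifies $\Psi$ with $\Phi^{\vee}$ and the unit with $[(v,u)]$ via functoriality of Isbell conjugation, $\catA(-,e)\cong(\beta\alpha)^{\vee}=\alpha^{\vee}\beta^{\vee}$ \citep[Lemma~2.2]{AL21}. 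You instead handle both directions at once by instantiating the paper's own Lemma~\ref{d:lem:3} at $\Psi=\Phi^{\vee}$, $c=a$, with the evaluation counit, and translating its two equations through Yoneda into ``$\alpha\beta=1_{\Phi}$'' and ``$x=x\alpha\beta$ for all $x\in\Phi^{\vee}(p)$''; since the second is a formal consequence of the first, the equivalence drops out. Your intermediate computations ($\varepsilon_{a,q}(y,v)=\beta_q(y)$, $\Phi(\beta_q(y))(u)=(\alpha\beta)_q(y)$, $x\alpha=\catA(x_a(u),-)$, and $\Phi^{\vee}(g)(v)=\catA(g,-)\circ\beta$) are all correct. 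What your route buys is self-containedness (no idempotent splitting, no external citations beyond the Yoneda lemma) and the transparent observation that one triangle identity is redundant given the other; what the paper's route buys is the explicit link to idempotent splitting, which is precisely the mechanism reused in the subsequent characterisation theorems (including Theorem~\ref{d:thm:2}), together with brevity by citation.

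One caveat, which affects the paper's formulation equally and is therefore not a gap in your argument: your claim that \ref{d:item:1} is ``exactly'' the two equations of Lemma~\ref{d:lem:3} reads the adjunction in \ref{d:item:1} as the canonical one whose counit is $\varepsilon_{\Phi}$, as set up in the proposition (quoting \citep[Proposition~10.4]{AL21}) preceding the lemma. That is the intended reading, and you do well to name the counit $\varepsilon_{\Phi}$ explicitly; but it is worth a sentence, because if \ref{d:item:1} merely asserted the existence of \emph{some} adjunction $\Phi\dashv\Phi^{\vee}$ with unit $[(v,u)]$ and unspecified counit, the implication \ref{d:item:1}$\Rightarrow$\ref{d:item:2} would actually fail: unit and counit can be twisted simultaneously by a nonidentity automorphism of $\Phi^{\vee}$, already when $\catA$ is a nontrivial group and $\Phi$ is representable.
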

\begin{proof}
  Assume first \ref{d:item:1}. Then \(\alpha\) and \(\beta\) correspond (up to an interchange of variables) to the natural transformations \(\delta\) and \(\gamma\) of \citep[Page~317]{Bor94}, hence \(\alpha\beta=1_{\Phi}\). Assume now \ref{d:item:2}. Let us recall first the argument in the proof of \citep[Proposition~7.9.2]{Bor94}: \(\beta\alpha \colon\catA(a,-)\to\catA(a,-)\) is idempotent and, by the Yoneda Lemma, it corresponds to an idempotent \(e \colon a\to a\) in \(\catA\), which in turn corresponds to an idempotent natural transformation \(\catA(-,e)\colon\catA(-,a)\to\catA(-,a)\). Furthermore, \(\catA(-,e)\) splits in \([\catA^{\op},\Set]\):
  \begin{displaymath}
    \begin{tikzcd}
      \catA(-,a)\ar{rr}{\catA(-,e)}\ar{dr}[swap]{\gamma}
      && \catA(-,a).\\
      & \Psi\ar{ur}[swap]{\delta}
    \end{tikzcd}
  \end{displaymath}
  Then \(\Psi\) is shown to be right adjoint to \(\Phi\), with unit \(\eta=[(\gamma_a(1_{a}),\alpha_a(1_a))]\). To conclude \ref{d:item:1}, just observe that \(\catA(-,e)\) is isomorphic to \((\beta\alpha)^{\vee}=\alpha^{\vee}\beta^{\vee}\) (see \citep[Lemma~2.2]{AL21}), hence \(\eta=[(\beta^{\vee}_a(1_a),\alpha_a(1_a))]=[(v,u)]\).
\end{proof}
We say that \(\Phi \colon\catE\modto\catA\) in \(\Dist\) is \emph{a retract of a representable distributor} whenever there are an object \(a\) of \(\catA\) and natural transformations as in \eqref{d:eq:6} with \(\alpha\beta=1_{\Phi}\). Lemma~\ref{d:lem:1} implies immediately the following two results.

\begin{proposition}[{\citep[Proposition~7.9.2]{Bor94}}]
  The distributor \(\Phi \colon\catE\modto\catA\) is left adjoint in \(\Dist\) if and only if it is a retract of a representable distributor.
\end{proposition}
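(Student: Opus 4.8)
The plan is to obtain this proposition as an essentially immediate corollary of Lemma~\ref{d:lem:1}, combined with the Isbell-conjugate characterisation of left adjoint distributors established above (the Proposition asserting that \(\Phi\) is left adjoint in \(\Dist\) if and only if \(\Phi\dashv\Phi^{\vee}\)). Both conditions in the biconditional are ultimately phrased in terms of the same data — an object \(a\) of \(\catA\) together with natural transformations \(\alpha\colon\catA(a,-)\to\Phi\) and \(\beta\colon\Phi\to\catA(a,-)\) — so the whole task reduces to translating the unit of the adjunction into that data and back again, which is exactly what Lemma~\ref{d:lem:1} packages.

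First I would dispatch the easy implication. Suppose \(\Phi\) is a retract of a representable distributor. By the definition just given, there are then an object \(a\) and natural transformations \(\alpha,\beta\) as in \eqref{d:eq:6} satisfying \(\alpha\beta=1_{\Phi}\). This is precisely condition~\ref{d:item:2} of Lemma~\ref{d:lem:1}, whence \(\Phi\dashv\Phi^{\vee}\) by that lemma; in particular \(\Phi\) is left adjoint in \(\Dist\).

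For the converse, assume \(\Phi\) is left adjoint. By the Isbell-conjugate characterisation we may take its right adjoint to be \(\Phi^{\vee}\), so that \(\Phi\dashv\Phi^{\vee}\). The unit of this adjunction is a map \(\catE\to\Phi^{\vee}\cdot\Phi\), and since \(\Phi^{\vee}\cdot\Phi(\star,\star)\) is the coend \(\int^{a}\Phi^{\vee}(a)\times\Phi(a)\), computed in \(\Set\) as the quotient \(\bigl(\sum_{a}\Phi^{\vee}(a)\times\Phi(a)\bigr)/{\sim}\), the unit selects an equivalence class \(\eta=[(v,u)]\) for some object \(a\) with \(u\in\Phi(a)\) and \(v\in\Phi^{\vee}(a)\), exactly as recalled before Lemma~\ref{d:lem:3}. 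By the Yoneda Lemma, \(u\) corresponds to a natural transformation \(\alpha\colon\catA(a,-)\to\Phi\), and \(v\) corresponds to \(\beta^{\vee}\colon\catA(-,a)\to\Phi^{\vee}\), hence to \(\beta\colon\Phi\to\catA(a,-)\). With these identifications the unit is in the normal form \([(v,u)]\), so condition~\ref{d:item:1} of Lemma~\ref{d:lem:1} holds, and the lemma yields \(\alpha\beta=1_{\Phi}\); that is, \(\Phi\) is a retract of a representable distributor.

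The only delicate point is the bookkeeping in the converse: one must check that the representative \((v,u)\) extracted from the unit of \(\Phi\dashv\Phi^{\vee}\) feeds into Lemma~\ref{d:lem:1} with the correct matching, namely \(u\) to \(\alpha\) and \(v\) to \(\beta\) via the Isbell conjugate \(\beta^{\vee}\). Once the unit is presented in the form \([(v,u)]\), however, this matching is just the Yoneda correspondence already built into the statement of Lemma~\ref{d:lem:1}, so no further computation is required and the proposition follows.
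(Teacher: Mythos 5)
Your proof is correct and takes essentially the same route as the paper, which derives this proposition as an immediate consequence of Lemma~\ref{d:lem:1}: the retract condition is exactly condition~\ref{d:item:2}, and for the converse the unit of \(\Phi\dashv\Phi^{\vee}\) is an equivalence class \([(v,u)]\) in the coend, whose components yield \(\alpha\) and \(\beta\) via the Yoneda correspondence built into the lemma. Your write-up merely spells out the bookkeeping that the paper's one-line proof leaves implicit, and does so accurately.
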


\begin{theorem}[{\citep[Proposition~7.9.3]{Bor94}}]
  A small category \(\catA\) is Lawvere complete if and only if every idempotent splits in \(\catA\).
\end{theorem}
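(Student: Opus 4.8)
The plan is to reduce everything to the immediately preceding characterisation of left adjoint distributors \(\Phi\colon\catE\modto\catA\) as retracts of representables, and then to observe that such retracts correspond, via the Yoneda Lemma, exactly to splittings of idempotent endomorphisms of representable functors — equivalently, by fullness and faithfulness of Yoneda, to idempotents in \(\catA\) itself. I would recall at the outset that it suffices to test Lawvere completeness on left adjoints with domain \(\catE\), and that a representable \(\Phi\colon\catE\modto\catA\) is precisely one of the form \(a_\ast=\catA(a,-)\) for an object \(a\).

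For the ``only if'' direction, I would start with an idempotent \(e\colon a\to a\) in \(\catA\) and form the natural transformation \(p:=\catA(e,-)\colon\catA(a,-)\to\catA(a,-)\), which is idempotent because \(e\) is. Since \([\catA,\Set]\) is idempotent complete, \(p\) splits, say \(p=\beta\alpha\) with \(\alpha\colon\catA(a,-)\to\Phi\), \(\beta\colon\Phi\to\catA(a,-)\) and \(\alpha\beta=1_\Phi\). By the preceding Proposition \(\Phi\) is then left adjoint in \(\Dist\), so Lawvere completeness yields an object \(c\) with \(\Phi\cong\catA(c,-)\). Transporting \(\alpha\) and \(\beta\) along this isomorphism and applying the Yoneda Lemma produces morphisms \(s\colon a\to c\) and \(r\colon c\to a\) with \(\alpha\beta=1_\Phi\) forcing \(sr=1_c\) and \(\beta\alpha=\catA(e,-)\) forcing \(rs=e\); that is, \(e\) splits through \(c\).

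For the ``if'' direction, let \(\Phi\colon\catE\modto\catA\) be left adjoint. By the preceding Proposition there are an object \(a\) and natural transformations \(\alpha\colon\catA(a,-)\to\Phi\), \(\beta\colon\Phi\to\catA(a,-)\) with \(\alpha\beta=1_\Phi\). Then \(\beta\alpha\colon\catA(a,-)\to\catA(a,-)\) is idempotent and, by the Yoneda Lemma, equals \(\catA(e,-)\) for a necessarily idempotent \(e\colon a\to a\). By hypothesis \(e\) splits as \(e=rs\), \(sr=1_c\), with \(s\colon a\to c\) and \(r\colon c\to a\). Then \(\catA(c,-)\) splits the same idempotent \(\beta\alpha\) via \(\catA(r,-)\) and \(\catA(s,-)\); since splittings of an idempotent are unique up to isomorphism, \(\Phi\cong\catA(c,-)=c_\ast\) is representable. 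Hence every left adjoint \(\catE\modto\catA\) is representable, and \(\catA\) is Lawvere complete.

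The bulk of the argument is bookkeeping made routine by Lemma~\ref{d:lem:1} and the retract characterisation; the one point demanding care is the passage between idempotent endomorphisms of representables and idempotents of \(\catA\), together with the uniqueness (up to isomorphism) of idempotent splittings used to identify \(\Phi\) with \(\catA(c,-)\). Rather than invoking this uniqueness abstractly, I would exhibit the mutual inverses \(\alpha\circ\catA(s,-)\colon\catA(c,-)\to\Phi\) and \(\catA(r,-)\circ\beta\colon\Phi\to\catA(c,-)\) explicitly, checking that their two composites collapse to identities using \(\alpha\beta=1_\Phi\), \(\beta\alpha=\catA(e,-)\), \(rs=e\) and \(sr=1_c\) — a short Yoneda computation that I expect to be the only genuinely computational step.
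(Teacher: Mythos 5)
Your proof is correct and takes essentially the same route as the paper: the paper declares this theorem an immediate consequence of Lemma~\ref{d:lem:1} and the retract characterisation, and what you write is precisely that deduction spelled out (splitting \(\catA(e,-)\) in the idempotent-complete presheaf category and invoking representability for one direction; Yoneda-transporting a splitting of the idempotent \(\beta\alpha\) for the other). This is also exactly the structure of the paper's explicit proof of the normed analogue, Theorem~\ref{d:thm:2}, so your elaboration matches the intended argument.
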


\subsection{Lawvere completeness for \(\mcatV\)-normed categories}
\label{sec:normed-categories}

Our principal interest lies in the case \(\mcatW=\Setss{\mcatV}\), for an arbitrary quantale \(\mcatV\), which we analyse now. First note that, by Proposition~\ref{d:prop:1} and Remark~\ref{d:rem:1}, the strict monoidal functor \(\mO \colon\SetV\to\Set\) induces a strict pseudofunctor
\begin{displaymath}
\db{\mO} \colon\DistV \longrightarrow\Dist
\end{displaymath}
where \(\db{\mO}(\ncatX)=\cb{\mO}(\ncatX)\) for each \(\mcatV\)-normed category \(\ncatX\); moreover, \(\db{\mO}\) is faithful on 2-cells. Consequently, for \(\Phi \colon \ncatE\modto\ncatA\) and \(\Psi \colon\ncatA\modto\ncatE\) in \(\DistV\), \(\Phi\dashv\Psi\) if and only if \(\db{\mO}(\Phi)\dashv\db{\mO}(\Psi)\) in \(\Dist\) with units being normed natural transformations. This will allow us to transport several facts about distributors between ordinary categories to the \(\mcatV\)-normed context. In particular:
\begin{lemma}\label{d:lem:4}
  Let \(\Phi \colon \ncatE\modto\ncatA\) and \(\Psi \colon\ncatA\modto\ncatE\) be in \(\DistV\). Then an adjunction \(\Phi\dashv\Psi\) in \(\DistV\) is given by the same data as in Lemma~\ref{d:lem:3}, where additionally every \(\varepsilon_{a,b}\) is \(\mcatV\)-normed and \(\kk\leq|[(v,u)]|\).
\end{lemma}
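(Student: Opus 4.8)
The plan is to derive the statement from Lemma~\ref{d:lem:3} by transporting it along the strict pseudofunctor $\db{\mO}\colon\DistV\to\Dist$ and then reinstating the norms. By the equivalence recorded just before this lemma, $\Phi\dashv\Psi$ in $\DistV$ holds if and only if $\db{\mO}(\Phi)\dashv\db{\mO}(\Psi)$ in $\Dist$ and, in addition, the unit $\eta$ and the counit $\varepsilon$ of that $\Dist$-adjunction are $\mcatV$-normed, meaning that all of their components are $\SetV$-morphisms; this reading of ``normed natural transformation'' is exactly what the faithfulness of $\db{\mO}$ on $2$-cells provides. Since $\db{\mO}(\Phi)$ and $\db{\mO}(\Psi)$ are the $\Set$-distributors $\catE\modto\catA$ and $\catA\modto\catE$ (with $\catA=\cb{\mO}(\ncatA)$) underlying $\Phi$ and $\Psi$, I would first apply Lemma~\ref{d:lem:3} to $\db{\mO}(\Phi)\dashv\db{\mO}(\Psi)$. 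This produces precisely the data of Lemma~\ref{d:lem:3}: a family $\varepsilon_{a,b}\colon\Phi(b)\times\Psi(a)\to\ncatA(a,b)$ natural in $a$ and $b$, an object $c$ together with $u\in\Phi(c)$ and $v\in\Psi(c)$ subject to the two equations, with counit $(\varepsilon_{a,b})$ and unit $\eta=[(v,u)]$. It then remains to verify that ``$\varepsilon$ and $\eta$ are $\mcatV$-normed'' amounts to ``every $\varepsilon_{a,b}$ is $\mcatV$-normed and $\kk\le|[(v,u)]|$''.

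For the counit I would identify the norm on the composite $\Phi\cdot\Psi\colon\ncatA\modto\ncatA$. The defining coend collapses over the one-object category $\ncatE$, giving $(\Phi\cdot\Psi)(a,b)=\Phi(b)\otimes\Psi(a)$ in $\SetV$, that is, the set $\Phi(b)\times\Psi(a)$ carrying the norm $|(y,x)|=|y|\otimes|x|$. Hence the component $\varepsilon_{a,b}\colon\Phi(b)\otimes\Psi(a)\to\ncatA(a,b)$ is a $\SetV$-morphism if and only if $|y|\otimes|x|\le|\varepsilon_{a,b}(y,x)|$ for all $y\in\Phi(b)$ and $x\in\Psi(a)$, i.e.\ if and only if $\varepsilon_{a,b}$ is $\mcatV$-normed in the sense of the lemma; the underlying naturality is already guaranteed by Lemma~\ref{d:lem:3}, and $\SetV$-naturality imposes no condition beyond these componentwise ones.

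For the unit I would appeal to Proposition~\ref{d:prop:1}: the coend $(\Psi\cdot\Phi)(\star,\star)=\int^{a\in\ncatA}\Psi(a)\otimes\Phi(a)$ is carried by the $\Set$-coend $\bigl(\sum_{a}\Psi(a)\times\Phi(a)\bigr)/{\sim}$ equipped with the \emph{final} structure along $\mO$, whence $|[(v,u)]|=\bigvee\{|v'|\otimes|u'|\mid(v',u')\sim(v,u)\}$. The unique component $\eta_{\star}\colon E\to(\Psi\cdot\Phi)(\star,\star)$, which selects $[(v,u)]$, is therefore a $\SetV$-morphism exactly when $\kk=|{\star}|\le|[(v,u)]|$. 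Together with the two preceding translations and the opening equivalence this establishes the claim. I expect the only genuine subtlety to lie in this careful reading of the norms on the composite distributors---in particular in recognising that the coend carrying the unit inherits the \emph{final} (rather than the initial) structure---while the rest of the argument is forced by Lemma~\ref{d:lem:3} and the faithfulness of $\db{\mO}$ on $2$-cells.
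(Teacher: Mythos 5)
Your proof is correct and takes essentially the same route the paper intends: the paper states this lemma as an immediate consequence of the preceding observation that \(\Phi\dashv\Psi\) in \(\DistV\) holds iff \(\db{\mO}(\Phi)\dashv\db{\mO}(\Psi)\) in \(\Dist\) with normed unit and counit, combined with Lemma~\ref{d:lem:3}. Your explicit identification of the norms on the composites --- the tensor norm on \((\Phi\cdot\Psi)(a,b)\cong\Phi(b)\otimes\Psi(a)\), and the \emph{final} structure on the coend \((\Psi\cdot\Phi)(\star,\star)\) via Proposition~\ref{d:prop:1}, so that \(|[(v,u)]|\) is the supremum over all representatives --- correctly fills in the translation that the paper leaves implicit.
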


Let us first analyse the notion of Lawvere completeness for \(\mcatV\)-normed categories coming from \(\mcatV\)-categories. For the symmetric strong monoidal functor \(\mi \colon \mcatV\to\SetV\), the correspondence of Remark~\ref{d:rem:1} induces a pseudofunctor
\begin{displaymath}
  \db{\mi} \colon\Dists{\mcatV} \longrightarrow\DistV
\end{displaymath}
which sends a \(\mcatV\)-distributor \(\varphi \colon X\modto Y\) to \(\Phi \colon\ncatX\modto\ncatY\) where \(\ncatX=\cb{\mi}(X)\), \(\ncatY=\cb{\mi}(Y)\) and \(\Phi(x,y)=(\star,\varphi(x,y))\). Furthermore, the symmetric strict monoidal functor \(\ms \colon\SetV\to\mcatV\) induces, for all \(\mcatV\)-normed categories \(\ncatA\) e \(\ncatB\), the functor
\begin{displaymath}
  \db{\ms} \colon \DistV(\ncatA,\ncatB)
  \longrightarrow\Dists{\mcatV}(\cb{\ms}(\ncatA),\cb{\ms}(\ncatB))
\end{displaymath}
sending \(\Phi \colon\ncatA\modto\ncatB\) to \(\varphi \colon \cb{\ms}(\ncatA)\modto\cb{\ms}(\ncatB)\) where \(\varphi(a,b)=\bigvee_{u\in\Phi(a,b)}|u|\). Clearly, for every \(\mcatV\)-distributor \(\varphi \colon X\modto Y\) we have \(\varphi=\db{\ms}\db{\mi}(\varphi)\), and for every \(\mcatV\)-normed distributor \(\Phi \colon\ncatA\modto\ncatB\), one has \(\Phi=\db{\mi}\db{\ms}(\Phi)\) if and only if \(\mO(\Phi(a,b))=\{\star\}\) for all \(a\in\ob\ncatA\) and all \(b\in\ob\ncatB\).

For a \(\mcatV\)-category \(X\), consider the \(\mcatV\)-normed category \(\ncatX=\cb{\mi}(X)\) and an adjunction \((\Phi \colon\ncatE\modto\ncatX)\dashv(\Psi \colon\ncatX\modto\catE)\) in \(\DistV\). Then also \(\db{\mO}(\Phi)\dashv\db{\mO}(\Psi)\) in \(\Dist\); therefore, by Lemma~\ref{d:lem:1}, there is an object \(x\) in \(\ncatX\) and split epimorphic natural transformations
\begin{displaymath}
  \db{\mO}(\ncatX)(x,-)\longrightarrow \db{\mO}(\Phi)
  \quad\text{and}\quad
  \db{\mO}(\ncatX)(-,x)\longrightarrow \db{\mO}(\Psi).
\end{displaymath}
Therefore, for every \(y\in\ob\ncatX\), both \(\db{\mO}(\Phi)(y)\) and \(\db{\mO}(\Psi)(y)\) are one-element sets. We have shown:
\begin{lemma}
  Let \(X\) be a \(\mcatV\)-category and consider the \(\mcatV\)-normed category \(\ncatX=\cb{\mi}(X)\). Then \(\mi \colon\mcatV\to\SetV\) induces an equivalence
  \begin{displaymath}
    \db{\mi}\colon
    \{\varphi \colon E\modto X\text{ left adjoint}\}
    \longrightarrow
    \{\Phi \colon \ncatE\modto\ncatX\text{ left adjoint}\}.
  \end{displaymath}
  Moreover, for every \(x\in X\), \(\db{\mi}(X(x,-))= (\cb{\mi}X)(x,-)\).
\end{lemma}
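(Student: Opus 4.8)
The plan is to show that $\db{\mi}$, restricted to the left-adjoint distributors, is fully faithful and bijective on objects, hence an equivalence (indeed an isomorphism of posets). The ingredients are that $\db{\mi}$ is a pseudofunctor, that $\mi$ is fully faithful, and the singleton-fibre observation recorded immediately before the statement. First I would note that $\db{\mi}$ does land in the left adjoints: being a pseudofunctor $\Dists{\mcatV}\to\DistV$, it preserves adjunctions, so $\varphi\dashv\psi$ yields $\db{\mi}(\varphi)\dashv\db{\mi}(\psi)$ and hence $\db{\mi}(\varphi)$ is left adjoint. Next I would check that $\db{\mi}\colon\Dists{\mcatV}(E,X)\to\DistV(\ncatE,\ncatX)$ is fully faithful. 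Both hom-categories are thin: a $2$-cell $\varphi\to\varphi'$ exists iff $\varphi\le\varphi'$ pointwise, and a $\mcatV$-normed $2$-cell $\db{\mi}(\varphi)\to\db{\mi}(\varphi')$ is the unique map of singletons $(\star,\varphi(y))\to(\star,\varphi'(y))$, which is $\mcatV$-normed exactly when $\varphi(y)\le\varphi'(y)$ for all $y$. Thus $\db{\mi}$ is an order-embedding on $2$-cells, so its restriction to the full subcategories of left adjoints is fully faithful.

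The substance is essential surjectivity. Given a left adjoint $\Phi\colon\ncatE\modto\ncatX$ with right adjoint $\Psi\colon\ncatX\modto\ncatE$, the computation recorded just before the statement (via $\db{\mO}(\Phi)\dashv\db{\mO}(\Psi)$ and Lemma~\ref{d:lem:1}) gives that $\mO(\Phi(y))$ and $\mO(\Psi(y))$ are one-element sets for every $y$. By the criterion noted earlier in this subsection, this is exactly the condition $\Phi=\db{\mi}\db{\ms}(\Phi)$ and $\Psi=\db{\mi}\db{\ms}(\Psi)$. Setting $\varphi:=\db{\ms}(\Phi)$ and $\psi:=\db{\ms}(\Psi)$, I would then establish $\varphi\dashv\psi$ in $\Dists{\mcatV}$, so that $\Phi=\db{\mi}(\varphi)$ on the nose with $\varphi$ left adjoint.

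I expect reflecting the adjunction along $\db{\mi}$ to be the main obstacle, and I would present it by either of two routes. Abstractly, since $\db{\mi}$ is a pseudofunctor one has $\Psi\cdot\Phi\cong\db{\mi}(\psi\cdot\varphi)$ and $\Phi\cdot\Psi\cong\db{\mi}(\varphi\cdot\psi)$, while $\mathrm{id}_{\ncatE}=\db{\mi}(\mathrm{id}_E)$ and $\ncatX=\cb{\mi}(X)=\db{\mi}(\mathrm{id}_X)$; the unit and counit of $\Phi\dashv\Psi$ are therefore $2$-cells between images of $1$-cells, and by the full faithfulness above they are the images of unique $2$-cells $\mathrm{id}_E\Rightarrow\psi\cdot\varphi$ and $\varphi\cdot\psi\Rightarrow\mathrm{id}_X$; faithfulness then transports the triangle identities, giving $\varphi\dashv\psi$. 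Concretely, one may instead read off the inequalities \eqref{d:eq:9} from the normed adjunction data of Lemma~\ref{d:lem:4}: the singleton fibres identify $\varphi(y)=|u_y|$ and $\psi(y)=|w_y|$ for the unique elements $u_y\in\Phi(y)$, $w_y\in\Psi(y)$; the $\mcatV$-normedness of each $\varepsilon_{a,b}$ gives $\varphi(b)\otimes\psi(a)=|u_b|\otimes|w_a|\le X(a,b)$, and $\kk\le|[(v,u)]|\le\bigvee_{y}\psi(y)\otimes\varphi(y)$ yields the unit inequality. Either way $\varphi$ is left adjoint, which with the first paragraph makes $\db{\mi}$ bijective on objects.

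Finally, for the ``moreover'' clause I would simply evaluate: the representable $X(x,-)\colon E\modto X$ is left adjoint, and $\db{\mi}(X(x,-))(y)=(\star,X(x,y))=(\cb{\mi}X)(x,y)$ by the very definition of the norm on $\ncatX=\cb{\mi}(X)$; equivalently this follows from the identity $(\cb{\mi}F)_{*}=\db{\mi}(F_{*})$ of Remark~\ref{d:rem:1}, applied to the $\mcatV$-functor $E\to X$ naming $x$, since then $\db{\mi}(x_{*})=x_{*}=(\cb{\mi}X)(x,-)$.
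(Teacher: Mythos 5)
Your proposal is correct and takes essentially the same route as the paper: the heart of both arguments is transporting the adjunction along \(\db{\mO}\) and invoking Lemma~\ref{d:lem:1} to conclude that \(\Phi\) and \(\Psi\) have singleton underlying fibres, hence lie in the image of \(\db{\mi}\) via \(\Phi=\db{\mi}\db{\ms}(\Phi)\). The paper ends its argument at that point (``We have shown:''), leaving the local full-faithfulness of \(\db{\mi}\) and the reflection of the adjunction back to \(\Dists{\mcatV}\) implicit, whereas you make these explicit (with two valid routes for the reflection, the concrete one via Lemma~\ref{d:lem:4} and the inequalities~\eqref{d:eq:9} being exactly what the paper's posetal setting requires); this is added rigour rather than a different method.
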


\begin{corollary}
  \label{d:cor:1}
  Let \(X\) be a \(\mcatV\)-category. Then \(X\) is Lawvere complete if and only if the \(\mcatV\)-normed category \(\cb{\mi}(X)\) is Lawvere complete.
\end{corollary}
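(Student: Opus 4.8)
The plan is to derive the corollary directly from the preceding Lemma, transporting representability back and forth along the maps $\db{\mi}$ and $\db{\ms}$ that the functors $\mi\colon\mcatV\to\SetV$ and $\ms\colon\SetV\to\mcatV$ induce on distributors. By the Remark following the definition of Lawvere completeness, on both sides it suffices to test representability of left adjoint distributors out of the one-object category, so throughout I work with left adjoints $\varphi\colon E\modto X$ in $\Dists{\mcatV}$ and $\Phi\colon\ncatE\modto\ncatX$ in $\DistV$, where $\ncatX=\cb{\mi}(X)$. The two facts I rely on are: $\db{\mi}$ is a pseudofunctor (hence preserves adjunctions and isomorphisms) and sends representables to representables, since $\db{\mi}(X(x,-))=(\cb{\mi}X)(x,-)=x_{*}$ by the Lemma; and $\db{\ms}\db{\mi}(\varphi)=\varphi$ for every $\mcatV$-distributor $\varphi$, as noted just before the Lemma.

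For the direction ``$X$ Lawvere complete $\Rightarrow\cb{\mi}(X)$ Lawvere complete'', I would take an arbitrary left adjoint $\Phi\colon\ncatE\modto\ncatX$ in $\DistV$. Since the Lemma exhibits $\db{\mi}$ as an \emph{equivalence} between left adjoints on the two sides, it is in particular essentially surjective, so there is a left adjoint $\varphi\colon E\modto X$ with $\Phi\cong\db{\mi}(\varphi)$. Lawvere completeness of $X$ then gives an element $x\in X$ with $\varphi=X(x,-)$. Applying $\db{\mi}$ yields $\Phi\cong\db{\mi}(\varphi)=\db{\mi}(X(x,-))=(\cb{\mi}X)(x,-)=x_{*}$, so $\Phi$ is representable; hence $\cb{\mi}(X)$ is Lawvere complete.

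For the converse, assume $\cb{\mi}(X)$ is Lawvere complete and let $\varphi\colon E\modto X$ be left adjoint in $\Dists{\mcatV}$. Because $\db{\mi}$ preserves adjunctions, $\Phi:=\db{\mi}(\varphi)\colon\ncatE\modto\ncatX$ is left adjoint, and by hypothesis it is representable: $\Phi\cong(\cb{\mi}X)(x,-)=\db{\mi}(X(x,-))$ for some object $x$. To pass this back to $X$ I would apply the retraction $\db{\ms}$: since $\db{\ms}\db{\mi}$ is the identity on $\mcatV$-distributors, applying $\db{\ms}$ to $\db{\mi}(\varphi)\cong\db{\mi}(X(x,-))$ gives $\varphi=X(x,-)$, so $\varphi$ is representable and $X$ is Lawvere complete.

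The whole argument is a transport of structure, and the only step needing genuine care — the main (minor) obstacle — is the reflection in the converse direction: it is not enough that $\db{\mi}$ \emph{preserves} representability, one must also know it \emph{reflects} it. This is supplied precisely by the identity $\db{\ms}\db{\mi}=\mathrm{id}$, which converts ``$\db{\mi}(\varphi)$ representable'' into ``$\varphi$ representable'' with no further computation; the preservation half is handled formally by pseudofunctoriality together with the Lemma's identity $\db{\mi}(X(x,-))=(\cb{\mi}X)(x,-)$.
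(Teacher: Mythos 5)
Your proposal is correct and follows essentially the same route as the paper: the corollary is obtained directly from the preceding Lemma, transporting left adjoint distributors along the equivalence $\db{\mi}$ (which matches representables with representables via $\db{\mi}(X(x,-))=(\cb{\mi}X)(x,-)$) and back via the retraction identity $\db{\ms}\db{\mi}=\mathrm{id}$, both of which are exactly the ingredients the paper sets up just before the Lemma. The only cosmetic difference is that for the reflection step you invoke $\db{\ms}\db{\mi}=\mathrm{id}$ explicitly where one could equally cite full faithfulness of the Lemma's equivalence; in the thin ($\mcatV$-enriched) setting these amount to the same thing.
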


A normed version of Lemma~\ref{d:lem:1} reads as follows.

\begin{lemma}\label{d:lem:2}
  Let \(\Phi \colon\ncatE\modto\ncatA\) in \(\DistV\). Consider an object \(a\) of \(\ncatA\) and (not necessarily \(\mcatV\)-normed) natural transformations
  \begin{equation}
    \label{d:eq:8}
    \alpha \colon\ncatA(a,-)\longrightarrow\Phi
    \quad\text{and}\quad
    \beta \colon\Phi \longrightarrow\ncatA(a,-);
  \end{equation}
  here \(\alpha\) corresponds to an element \(u\in\Phi(a)\) and \(\beta^{\vee}\colon\catA(-,a)\to\Phi^{\vee}\) to \(v=\beta\in\Phi^{\vee}(a)\). Then the following assertions are equivalent,
  \begin{tfae}
  \item\label{d:item:3} \(\Phi\dashv\Phi^{\vee}\) in \(\DistV\) with unit \(\eta=[(v,u)]\) where \(\kk\leq|u|\) and \(\kk\leq|v|\),
  \item\label{d:item:4} the natural transformations \(\alpha\) and \(\beta\) are \(\mcatV\)-normed and \(\alpha\beta=1_{\Phi}\) in \(\DistV\).
  \end{tfae}
\end{lemma}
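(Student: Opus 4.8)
The plan is to reduce the whole statement to the ordinary ($\Set$-enriched) situation of Lemma~\ref{d:lem:1} by means of the strict pseudofunctor $\db{\mO}\colon\DistV\to\Dist$, which is faithful on 2-cells and, as recorded at the start of Subsection~\ref{sec:normed-categories}, detects the adjunctions in question: $\Phi\dashv\Phi^{\vee}$ holds in $\DistV$ if and only if $\db{\mO}(\Phi)\dashv\db{\mO}(\Phi^{\vee})=\db{\mO}(\Phi)^{\vee}$ holds in $\Dist$ with unit and counit normed. Here the identification $\db{\mO}(\Phi^{\vee})=\db{\mO}(\Phi)^{\vee}$ follows from Proposition~\ref{d:prop:1} (compatibility of $\mO$ with ends) together with the description of the internal hom of $\SetV$ in Example~\ref{d:ex:3}, since $\mO([\Phi(b),\ncatA(a,b)])=\Set(\Phi(b),\ncatA(a,b))$. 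The only genuinely new work is the norm bookkeeping, which I would carry out first.

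First I would translate the two inequalities in \ref{d:item:3} into the normedness of $\alpha$ and $\beta$. Since $\alpha$ corresponds by Yoneda to $u=\alpha_a(1_a)\in\Phi(a)$, naturality gives $\alpha_b(f)=\Phi(f)(u)$ for $f\in\ncatA(a,b)$; as $\Phi$ is a normed functor one has $|f|\leq|\Phi(f)|=\bigwedge_{w}[|w|,|\Phi(f)(w)|]$, hence $|f|\otimes|u|\leq|\Phi(f)(u)|$, so that $\kk\leq|u|$ yields $|f|=|f|\otimes\kk\leq|f|\otimes|u|\leq|\alpha_b(f)|$, i.e.\ $\alpha$ is normed; conversely, normedness of $\alpha_a$ forces $\kk\leq|1_a|\leq|\alpha_a(1_a)|=|u|$. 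For $\beta$ the correspondence is even more direct: $v=\beta$ is an element of $\Phi^{\vee}(a)=\int_{b}[\Phi(b),\ncatA(a,b)]$, whose norm is $|v|=\bigwedge_{b}\bigwedge_{w\in\Phi(b)}[|w|,|\beta_b(w)|]$ by Example~\ref{d:ex:3} and Proposition~\ref{d:prop:1}, so that $\kk\leq|v|$ is literally the assertion that each $\beta_b$ is a normed map. Thus the conjunction $\kk\leq|u|$ and $\kk\leq|v|$ is equivalent to the normedness of both $\alpha$ and $\beta$.

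With this dictionary in place the equivalence follows from Lemma~\ref{d:lem:1}. For \ref{d:item:3}$\Rightarrow$\ref{d:item:4}, applying $\db{\mO}$ to the adjunction produces $\db{\mO}(\Phi)\dashv\db{\mO}(\Phi)^{\vee}$ with unit $[(v,u)]$, whence $\db{\mO}(\alpha)\,\db{\mO}(\beta)=1$ by Lemma~\ref{d:lem:1}; since $\kk\leq|u|,|v|$ makes $\alpha,\beta$ normed, the composite $\alpha\beta$ is a 2-cell of $\DistV$ with $\db{\mO}(\alpha\beta)=\db{\mO}(1_{\Phi})$, and faithfulness of $\db{\mO}$ on 2-cells gives $\alpha\beta=1_{\Phi}$. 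For \ref{d:item:4}$\Rightarrow$\ref{d:item:3}, normedness of $\alpha,\beta$ gives $\kk\leq|u|,|v|$, while $\db{\mO}(\alpha)\,\db{\mO}(\beta)=1$ yields, by Lemma~\ref{d:lem:1}, the ordinary adjunction $\db{\mO}(\Phi)\dashv\db{\mO}(\Phi)^{\vee}$ with unit $[(v,u)]$ and counit the Isbell counit $\db{\mO}(\varepsilon_{\Phi})$. It then remains to lift this to $\DistV$ through Lemma~\ref{d:lem:4}: the counit components $\varepsilon_{a,b}$ are normed because $\varepsilon_{\Phi}$ is $\SetV$-natural by construction, and the unit satisfies $\kk\leq|v|\otimes|u|\leq|[(v,u)]|$, the last inequality because the coend carrying $\Phi^{\vee}\cdot\Phi(\star,\star)$ is equipped with the final structure, under which the class $[(v,u)]$ has norm at least that of the representative $(v,u)$.

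The step I expect to require the most care is the identification $\db{\mO}(\Phi^{\vee})=\db{\mO}(\Phi)^{\vee}$ together with the matching of the two counits, that is, checking that the Isbell data computed in $\SetV$ reduces, under the forgetful functor, precisely to the Isbell data of the underlying $\Set$-distributor. This is exactly where Proposition~\ref{d:prop:1} and the explicit internal hom of Example~\ref{d:ex:3} do the real work; once it is in hand, the remainder is the routine norm bookkeeping of the second paragraph combined with the purely $\Set$-theoretic Lemma~\ref{d:lem:1}.
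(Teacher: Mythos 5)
Your proof is correct, and its skeleton is the same as the paper's: both directions funnel the combinatorial content through Lemma~\ref{d:lem:1} applied to the underlying \(\Set\)-distributors, both rest on the same norm dictionary (\(\kk\leq|u|\) iff \(\alpha\) is normed, \(\kk\leq|v|\) iff \(\beta\) is normed, via the normed Yoneda lemma and Proposition~\ref{d:prop:1}), and both lift the \(\Set\)-level adjunction back to \(\DistV\) by producing a normed unit and counit, i.e.\ via Lemma~\ref{d:lem:4}. Where you genuinely depart from the paper is the counit check in \ref{d:item:4}\(\Rightarrow\)\ref{d:item:3}: the paper follows Borceux and writes the counit explicitly as \(\varepsilon_{b,c}(w_1,w_2)=\beta_c(w_1)\cdot\alpha^{\vee}_b(w_2)\), then estimates \(|\beta_c(w_1)\cdot\alpha^{\vee}_b(w_2)|\geq|\beta_c(w_1)|\otimes|\alpha^{\vee}_b(w_2)|\geq|w_1|\otimes|w_2|\), so that the hypotheses of \ref{d:item:4} (normedness of \(\beta\), and of \(\alpha^{\vee}\) via \(\kk\leq|\alpha|\leq|\alpha^{\vee}|\)) enter the counit estimate; you instead observe that this counit is the Isbell counit \(\varepsilon_{\Phi}\), which is normed for \emph{every} \(\Phi\), being the image of \(1_{\Phi^{\vee}}\) under a \(\SetV\)-isomorphism of hom-objects --- concretely it is evaluation, and \(|(w_2)_c(w_1)|\geq|w_2|\otimes|w_1|\) is immediate from the initial-structure description of the norm on \(\Phi^{\vee}(b)\) supplied by Proposition~\ref{d:prop:1} and Example~\ref{d:ex:3}. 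Your route is the more conceptual one, and it buys a sharper insight: the counit condition is automatic, so the hypotheses of \ref{d:item:4} are only ever needed for the unit, which matches the emphasis on presentable units in Definition~\ref{w:def:1}. What it costs is exactly the compatibility you flag yourself, namely that \(\db{\mO}\) identifies \(\Phi^{\vee}\) with \(\db{\mO}(\Phi)^{\vee}\) and intertwines the two Isbell counits (plus uniqueness of the counit mated to a given unit); this does hold, by Proposition~\ref{d:prop:1}, and since the paper needs the same identification of conjugates merely to invoke Lemma~\ref{d:lem:1}, the extra overhead relative to the paper's hands-on computation is real but minor. Your remaining norm bookkeeping (the dictionary, and the unit estimate \(\kk\leq|v|\otimes|u|\leq|[(v,u)]|\) via the final structure on the coend) coincides with the paper's.
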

\begin{proof}
  Assume first \ref{d:item:3}. Then \(\kk\leq |u|=|\alpha|\) and \(\kk\leq |\beta|\), that is, \(\alpha\) and \(\beta\) are \(\mcatV\)-normed natural transformations. Therefore, \ref{d:item:4} follows with Lemma~\ref{d:lem:1}. Assume now \ref{d:item:4}. Then, since \(\alpha\) and \(\beta\) are \(\mcatV\)-normed, \(\kk\leq |\alpha|=|u|\) and \(\kk\leq|\beta|=|v|\), therefore \(\kk\leq |[(v,u)]|\). Hence, the unit \(\eta\) is a \(\mcatV\)-normed natural transformation. Following the proof of \citep[Proposition~7.9.2]{Bor94}, the counit \(\varepsilon \colon \Phi\cdot\Phi^{\vee}\to\ncatA\) has components
  \begin{align*}
    \varepsilon_{b,c}\colon \Phi(c)\otimes\Phi^{\vee}(b)
    & \longrightarrow \ncatA(b,c).\\
    (w_1,w_2)
    &\longmapsto \beta_c(w_1)\cdot \alpha^{\vee}_b(w_2)
  \end{align*}
  Since \(\kk\leq|\alpha|\leq|\alpha^{\vee}|\), also \(\alpha^{\vee}\) is \(\mcatV\)-normed, and we compute
  \begin{displaymath}
    |\beta_c(w_1)\cdot \alpha^{\vee}_b(w_2)|
    \geq |\beta_c(w_1)|\otimes |\alpha^{\vee}_b(w_2)|
    \geq |w_1|\otimes|w_2|=|(w_1,w_2)|.
  \end{displaymath}
  We conclude that also \(\varepsilon\colon \Phi\cdot\Phi^{\vee}\to\ncatA\) is \(\mcatV\)-normed, which proves~\ref{d:item:3}.
\end{proof}

\begin{definition}\label{w:def:1}
  We say that \(\Phi \colon\ncatE\modto\ncatA\) in \(\DistV\) is a \emph{\(\mcatV\)-normed retract of a representable distributor} whenever there is an object \(a\) of \(\ncatA\) and \(\mcatV\)-normed natural transformations
  \begin{displaymath}
    \alpha \colon\ncatA(a,-)\longrightarrow\Phi
    \quad\text{and}\quad
    \beta \colon\Phi \longrightarrow\ncatA(a,-),
  \end{displaymath}
  with \(\alpha\beta=1_{\Phi}\).

  We say that a left adjoint \(\mcatV\)-normed distributor \(\Phi \colon\ncatE\modto\ncatA\) \emph{has a presentable unit} whenever there exist an object \(a\) in \(\ncatA\) and elements \(u\in\Phi(a)\) and \(v\in\Phi^{\vee}(a)\) so that the unit \(\eta\) of the adjunction \(\Phi\dashv\Phi^{\vee}\) is of the form \(\eta=[(v,u)]\) and \(\kk\leq|u|\) and \(\kk\leq|v|\).
\end{definition}

By the uniqueness of adjoints in bicategories, given adjunctions \((\Phi\colon\ncatE\modto\ncatA)\dashv(\Psi \colon\ncatA\modto\ncatE)\) and \((\Phi\colon\ncatE\modto\ncatA)\dashv(\Psi' \colon\ncatA\modto\ncatE)\) in \(\DistV\) with units \(\eta\) and \(\eta'\), respectively, and given an object \(a\) in \(\ncatA\) and an element \(u\in\Phi(a)\) with \(\kk\leq|u|\), then there exists \(v\in\Psi(a)\) with \(\kk\leq|v|\) and \(\eta=[(v,u)]\) if and only if there exists \(v'\in\Psi'(a)\) with \(\kk\leq|v'|\) and \(\eta'=[(v',u)]\). In particular, this notion does not depend on the choice of the unit; and similarly, with \(\Phi\) also every distributor isomorphic to \(\Phi\) has a presentable unit. The adjunction \(\ncatA(a,-)\dashv\ncatA(-,a)\) has the unit \(\eta=[(1_a,1_a)]\); therefore, every representable left adjoint distributor has a presentable unit.

\begin{proposition}
  Let \(\Phi \colon\ncatE\modto\ncatA\) be in \(\DistV\). Then the following assertions are equivalent.
  \begin{tfae}
  \item \(\Phi\) is a \(\mcatV\)-normed retract of a representable distributor.
  \item \(\Phi\) is left adjoint in \(\DistV\) and has a presentable unit.
  \end{tfae}
\end{proposition}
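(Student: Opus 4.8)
The plan is to read this proposition as a direct repackaging of Lemma~\ref{d:lem:2}, with the bridge between its two formulations supplied by the Yoneda lemma and the Isbell characterisation of left adjoints (\citep[Proposition~10.4]{AL21}, recalled above, stating that $\Phi$ is left adjoint in $\DistV$ if and only if $\Phi\dashv\Phi^{\vee}$). First I would record the dictionary that underlies everything: for a fixed object $a$ of $\ncatA$, giving natural transformations $\alpha\colon\ncatA(a,-)\to\Phi$ and $\beta\colon\Phi\to\ncatA(a,-)$ is the same as giving elements $u\in\Phi(a)$ and $v\in\Phi^{\vee}(a)$, since $\alpha$ corresponds to $u$ by Yoneda and $v\in\Phi^{\vee}(a)=[\ncatA,\SetV](\Phi,\ncatA(a,-))$ literally \emph{is} the transformation $\beta$. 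Under this dictionary, assertion (i) of the present proposition is precisely condition \ref{d:item:4} of Lemma~\ref{d:lem:2} existentially quantified over $a$, while assertion (ii) is precisely condition \ref{d:item:3} existentially quantified over $a$, $u$, $v$, once we note that \ref{d:item:3} already incorporates the statement $\Phi\dashv\Phi^{\vee}$.

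For the implication (i) $\Rightarrow$ (ii) I would take the object $a$ and the $\mcatV$-normed natural transformations $\alpha,\beta$ with $\alpha\beta=1_{\Phi}$ provided by (i), pass to the corresponding $u\in\Phi(a)$ and $v\in\Phi^{\vee}(a)$, and apply the implication \ref{d:item:4} $\Rightarrow$ \ref{d:item:3} of Lemma~\ref{d:lem:2}. This yields $\Phi\dashv\Phi^{\vee}$, so $\Phi$ is left adjoint in $\DistV$, with unit $\eta=[(v,u)]$ satisfying $\kk\leq|u|$ and $\kk\leq|v|$, which is exactly the assertion that $\Phi$ has a presentable unit.

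For the converse (ii) $\Rightarrow$ (i) I would start from the presentable-unit data: an object $a$ together with $u\in\Phi(a)$ and $v\in\Phi^{\vee}(a)$ such that the unit of $\Phi\dashv\Phi^{\vee}$ is $\eta=[(v,u)]$ with $\kk\leq|u|$ and $\kk\leq|v|$, the adjunction $\Phi\dashv\Phi^{\vee}$ being available because $\Phi$ is left adjoint. Translating $u$ and $v$ back into natural transformations $\alpha$ and $\beta$ via the dictionary above, this is condition \ref{d:item:3} of Lemma~\ref{d:lem:2}, and the implication \ref{d:item:3} $\Rightarrow$ \ref{d:item:4} gives that $\alpha$ and $\beta$ are $\mcatV$-normed and satisfy $\alpha\beta=1_{\Phi}$; that is, $\Phi$ is a $\mcatV$-normed retract of a representable distributor.

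The substantive work has already been carried out in Lemma~\ref{d:lem:2}, so I expect no genuine obstacle; the only points demanding care are bookkeeping. The first is matching the existential quantifiers: the \emph{same} object $a$ and the \emph{same} Yoneda-paired datum $(u,v)\leftrightarrow(\alpha,\beta)$ serve both conditions, so the equivalence \ref{d:item:3} $\Leftrightarrow$ \ref{d:item:4} at fixed $a$ lifts cleanly to the existentially quantified statements here. The second is that the notion of presentable unit is independent of the chosen right adjoint, as observed in the discussion preceding the statement, so singling out $\Phi^{\vee}$ as the distinguished right adjoint entails no loss of generality.
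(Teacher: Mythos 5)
Your proposal is correct and is essentially the paper's own proof: the paper likewise deduces the proposition immediately from Lemma~\ref{d:lem:2}, using the normed Yoneda correspondence \((u,v)\leftrightarrow(\alpha,\beta)\) with \(|\alpha|=|u|\) and \(|\beta|=|v|\) to match the two conditions. Your additional bookkeeping remarks (fixing the same object \(a\) across both sides, and independence of the presentable-unit notion from the choice of right adjoint) are exactly the points the paper leaves implicit.
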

\begin{proof}
  Follows immediately from Lemma~\ref{d:lem:2}, after the observation that \(|\alpha|=|u|\) (by the normed Yoneda Lemma) and \(|\beta|=|v|\).
\end{proof}

\begin{example}
  For \(\mcatV=\one\), we are in the context of Subsection~\ref{sec:ordinary-categories} since \(\DistV\cong\Dist\) in this case. Clearly, every left adjoint \(\Phi \colon\catE\modto\catA\) in \(\Dist\) has a presentable unit.
\end{example}

\begin{example}
  Similarly, for \(\mcatV=\two\), every left adjoint \(\Phi \colon\ncatE\modto\ncatA\) in \(\DistV\) has a presentable unit. In fact, this is true for every quantale \(\mcatV\) where the unit \(\kk\) has the property that, for every non-empty subset \(A\subseteq\mcatV\), \(\kk\leq\bigvee A\) implies that \(\kk\leq a\) for some \(a\in A\).
\end{example}

\begin{example}
  \label{d:ex:2}
  Consider now a quantale \(\mcatV\) and a left adjoint \(\mcatV\)-distributor \(\varphi \colon E\modto X\) with right adjoint \(\psi \colon X\modto E\). Then \(\db{\mi}(\varphi)\) in \(\DistV\) has a presentable unit if and only if there is some \(x\in X\) with \(\kk\leq\varphi(x)\) and \(\kk\leq\psi(x)\).
\end{example}

Using the functor
\begin{displaymath}
  (-)_{\circ}\colon\DistV(\ncatE,\ncatA)
  \longrightarrow
  \Dist(\ncatE_{\circ}=\catE,\ncatA_{\circ}),
\end{displaymath}
we see that, if \(\Phi\) is a \(\mcatV\)-normed retract of \(\ncatA(a,-)\), then \(\Phi_{\circ}\) is a retract of \(\ncatA(a,-)_{\circ}=\ncatA_{\circ}(a,-)\), hence \(\Phi_{\circ}\) is a left adjoint distributor. In fact, a little more can be said:

\begin{proposition}
  If \(\Phi \colon\ncatE\modto\ncatA\) in \(\DistV\) is left adjoint with presentable unit and right adjoint \(\Psi \colon\ncatA\modto\ncatE\). Then \(\Phi_{\circ}\dashv\Psi_{\circ}\) in \(\Dist\).
\end{proposition}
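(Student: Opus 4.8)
The plan is to transport an explicit presentation of the adjunction $\Phi\dashv\Psi$ in $\DistV$ down to $\Dist$ and to check the triangle identities by hand, rather than to apply $(-)_{\circ}$ to the adjunction formally. The reason for this detour is also the main conceptual obstacle. The functor $(-)_{\circ}$ is induced by the merely \emph{lax} monoidal functor $\SetV(E,-)\colon\SetV\to\Set$; being representable it preserves limits, but it does not preserve the coends (colimits) that define composition of distributors, since norms of quotients are computed by suprema and $\kk\leq\bigvee S$ need not force $\kk\leq s$ for some representative. Hence $(-)_{\circ}$ is only lax on $\DistV$ and cannot be expected to carry adjunctions to adjunctions on its own, which is precisely why the hypothesis of a presentable unit is needed.

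First I would fix a presentation of the given adjunction. Since $\Phi$ is left adjoint with right adjoint $\Psi$, uniqueness of adjoints gives $\Psi\cong\Phi^{\vee}$, so it suffices to treat $\Psi=\Phi^{\vee}$ and note $\Psi_{\circ}\cong(\Phi^{\vee})_{\circ}$ at the end. By the preceding proposition, $\Phi$ is a $\mcatV$-normed retract of a representable $\ncatA(a,-)$, say via $\alpha\colon\ncatA(a,-)\to\Phi$ and $\beta\colon\Phi\to\ncatA(a,-)$. By the proof of Lemma~\ref{d:lem:2} this presents the adjunction $\Phi\dashv\Phi^{\vee}$ in the sense of Lemma~\ref{d:lem:4}: the counit is the $\mcatV$-normed natural family $\varepsilon_{b,c}(w_1,w_2)=\beta_c(w_1)\cdot\alpha^{\vee}_b(w_2)$, the unit is $\eta=[(v,u)]$ with $u\in\Phi(a)$ corresponding to $\alpha$ and $v\in\Phi^{\vee}(a)$ to $\beta^{\vee}$, and the two triangle identities of Lemma~\ref{d:lem:3} hold for this very data; moreover $\kk\leq|\alpha|=|u|$ and $\kk\leq|\beta|=|v|$ since $\alpha,\beta$ are $\mcatV$-normed.

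Next I would restrict this data along $(-)_{\circ}$. Writing $\Phi_{\circ}(b)=\{w\in\Phi(b)\mid\kk\leq|w|\}$ and likewise for $\Psi_{\circ}$, the conditions $\kk\leq|u|$, $\kk\leq|v|$ say precisely that $u\in\Phi_{\circ}(a)$ and $v\in\Psi_{\circ}(a)$. The key verification is that $\varepsilon$ restricts: for $y\in\Phi_{\circ}(b)$ and $x\in\Psi_{\circ}(a')$ one has $\kk=\kk\otimes\kk\leq|y|\otimes|x|=|(y,x)|\leq|\varepsilon_{a',b}(y,x)|$, the last inequality because $\varepsilon_{a',b}$ is $\mcatV$-normed; hence $\varepsilon_{a',b}(y,x)\in\ncatA_{\circ}(a',b)$, and this defines a family $\varepsilon^{\circ}_{a',b}\colon\Phi_{\circ}(b)\times\Psi_{\circ}(a')\to\ncatA_{\circ}(a',b)$, natural in $a'$ and $b$ as a restriction of a natural family. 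This well-definedness step is the only place where the norm compatibility is genuinely used.

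Finally I would apply Lemma~\ref{d:lem:3} in the $\Set$-setting to the data $(\varepsilon^{\circ}_{a',b},a,u,v)$. The identities to be checked, $x=\Psi_{\circ}(\varepsilon^{\circ}_{a',a}(u,x))(v)$ and $y=\Phi_{\circ}(\varepsilon^{\circ}_{a,b}(y,v))(u)$ for $y\in\Phi_{\circ}(b)$ and $x\in\Psi_{\circ}(a')$, are literal restrictions of the corresponding identities in $\DistV$: since $\Phi_{\circ},\Psi_{\circ}$ and $\varepsilon^{\circ}$ are the restrictions of $\Phi,\Psi,\varepsilon$ and $\varepsilon^{\circ}_{a',a}(u,x)=\varepsilon_{a',a}(u,x)$ as a morphism of $\ncatA_{\circ}$, both equalities transfer verbatim. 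By Lemma~\ref{d:lem:3} this exhibits $\Phi_{\circ}\dashv\Psi_{\circ}$ in $\Dist$, with unit $[(v,u)]$ and counit $(\varepsilon^{\circ}_{a',b})$, as required.
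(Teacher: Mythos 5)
Your proposal is correct and is, in substance, the argument the paper compresses into its one-line proof (``combine Lemmas~\ref{d:lem:3} and~\ref{d:lem:4}''): extract elementwise adjunction data consisting of a \(\mcatV\)-normed counit family together with a unit representative \((v,u)\) satisfying \(\kk\leq|u|\) and \(\kk\leq|v|\) (exactly what the presentable-unit hypothesis supplies), observe that this data restricts to the \(\circ\)-level — your computation \(\kk\leq|y|\otimes|x|\leq|\varepsilon_{a',b}(y,x)|\) being the decisive point — and invoke Lemma~\ref{d:lem:3} to recognise the restricted data as an adjunction \(\Phi_{\circ}\dashv\Psi_{\circ}\) in \(\Dist\). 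Your detour through \(\Psi\cong\Phi^{\vee}\), the retract proposition and the proof of Lemma~\ref{d:lem:2} is a harmless but unnecessary way of manufacturing that data, since Lemma~\ref{d:lem:4} (together with the remark that the presentable unit furnishes a good representative of the unit class) already hands it over directly.
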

\begin{proof}
  Combine Lemmas~\ref{d:lem:3} and \ref{d:lem:4}.
\end{proof}

\begin{corollary}
  If \(\Phi \colon\ncatE\modto\ncatA\) in \(\DistV\) is left adjoint with presentable unit, then \((\Phi^{\vee})_{\circ}\cong\Phi_{\circ}^{\vee}\).
\end{corollary}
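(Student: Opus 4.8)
The plan is to assemble the corollary from the two propositions that immediately precede it, together with the uniqueness of adjoints in a bicategory; no genuinely new computation should be needed.

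First I would pin down the right adjoint of \(\Phi\) concretely. Since \(\Phi\) is assumed left adjoint in \(\DistV\), the Isbell-conjugation criterion (the \(\SetV\)-instance of the proposition asserting that \(\Phi\) is left adjoint if and only if \(\Phi\dashv\Phi^{\vee}\), with counit \(\varepsilon_{\Phi}\)) tells us that we may take the right adjoint to be \(\Psi=\Phi^{\vee}\). Next I would feed this into the preceding proposition: as \(\Phi\) is left adjoint with presentable unit and right adjoint \(\Psi=\Phi^{\vee}\), that proposition yields
\(\Phi_{\circ}\dashv\Psi_{\circ}=(\Phi^{\vee})_{\circ}\) in \(\Dist\). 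In particular this exhibits \(\Phi_{\circ}\colon\catE\modto\ncatA_{\circ}\) as a left adjoint distributor in \(\Dist\).

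Then I would apply the very same Isbell-conjugation criterion a second time, now in its \(\Set\)-instance: a left adjoint distributor \(\Phi_{\circ}\colon\catE\modto\ncatA_{\circ}\) satisfies \(\Phi_{\circ}\dashv\Phi_{\circ}^{\vee}\). Comparing the two adjunctions \(\Phi_{\circ}\dashv(\Phi^{\vee})_{\circ}\) and \(\Phi_{\circ}\dashv\Phi_{\circ}^{\vee}\), both with the same left adjoint \(\Phi_{\circ}\), the uniqueness of right adjoints in the bicategory \(\Dist\) forces \((\Phi^{\vee})_{\circ}\cong\Phi_{\circ}^{\vee}\), which is exactly the assertion.

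I do not expect a serious obstacle: the substantive work has already been discharged in the preceding proposition (which combines Lemmas~\ref{d:lem:3} and~\ref{d:lem:4} on the explicit shape of adjunctions in \(\Dist\) and \(\DistV\)). The one step demanding a little care is the final one, where I pass from a coincidence of left adjoints to an isomorphism of their right adjoints; this is precisely the uniqueness-of-adjoints principle, and it relies on \(\ncatA\) (hence \(\ncatA_{\circ}\)) being small, so that the composites and homs in \(\Dist\) defining the relevant coends are available.
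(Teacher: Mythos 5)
Your proposal is correct and is precisely the intended derivation: the paper states this as an immediate corollary of the preceding proposition (taking \(\Psi=\Phi^{\vee}\) via the Isbell-conjugation criterion, obtaining \(\Phi_{\circ}\dashv(\Phi^{\vee})_{\circ}\) in \(\Dist\), then comparing with \(\Phi_{\circ}\dashv\Phi_{\circ}^{\vee}\) and invoking uniqueness of right adjoints). Your added care about the presentable-unit hypothesis being exactly what licenses the passage to \(\Dist\) --- rather than any general preservation of adjunctions by \((-)_{\circ}\) --- matches the paper's logic.
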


\begin{theorem}\label{d:thm:2}
  A small \(\mcatV\)-normed category \(\ncatA\) is Lawvere complete if and only if
  \begin{enumerate}
  \item the ordinary category \(\ncatA_{\circ}\) is idempotent complete;
  \item every left adjoint \(\mcatV\)-distributor \(\Phi\colon\ncatE\modto\ncatA\) has a presentable unit.
  \end{enumerate}
\end{theorem}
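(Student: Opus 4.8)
The plan is to prove both implications by reducing representability of left adjoint $\mcatV$-distributors to the splitting of idempotents in $\ncatA_\circ$, using Borceux's idempotent-splitting characterisation \citep[Proposition~7.9.3]{Bor94} applied to $\ncatA_\circ$, together with the bridge between $\DistV$ and $\Dist$ furnished by the restriction functor $(-)_\circ\colon\DistV(\ncatE,\ncatA)\to\Dist(\catE,\ncatA_\circ)$ and by Lemma~\ref{d:lem:2}. For the forward implication, condition~(2) is immediate: if $\ncatA$ is Lawvere complete then every left adjoint $\Phi\colon\ncatE\modto\ncatA$ is representable, $\Phi\cong a_{*}$, and by the remark following Definition~\ref{w:def:1} every representable left adjoint distributor (and anything isomorphic to it) has a presentable unit.

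For condition~(1) it suffices, by \citep[Proposition~7.9.3]{Bor94}, to show that $\ncatA_\circ$ is Lawvere complete, so let $\Phi_0\colon\catE\modto\ncatA_\circ$ be left adjoint in $\Dist$. By \citep[Proposition~7.9.2]{Bor94} it is a retract of a representable $\ncatA_\circ(a,-)$, and the resulting idempotent corresponds under Yoneda to some $e\colon a\to a$ with $e\cdot e=e$ and $\kk\leq|e|$, i.e.\ an idempotent of $\ncatA_\circ$. I would then \emph{lift} $\Phi_0$ to $\ncatA$ by splitting pointwise the idempotent $\mcatV$-normed natural transformation $\ncatA(e,-)\colon\ncatA(a,-)\to\ncatA(a,-)$ (precomposition with $e$): set $\Phi(b)=\{f\in\ncatA(a,b)\mid f\cdot e=f\}$ with the norm inherited from $\ncatA(a,b)$, with retraction $f\mapsto f\cdot e$ and section the inclusion. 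As $\kk\leq|e|$ gives $|f\cdot e|\geq|f|$, both are $\mcatV$-normed, so by Lemma~\ref{d:lem:2} the distributor $\Phi\colon\ncatE\modto\ncatA$ is left adjoint; by hypothesis it is representable, $\Phi\cong(a')_{*}$. Applying the functor $(-)_\circ$ and using $\ncatA(a',-)_\circ=\ncatA_\circ(a',-)$, the restriction $\Phi_\circ$ is representable; but $\Phi_\circ$ and $\Phi_0$ are both splittings of the idempotent $\ncatA_\circ(e,-)$ of $[\ncatA_\circ,\Set]$, hence isomorphic, so $\Phi_0$ is representable.

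For the converse, assume (1) and (2) and let $\Phi\colon\ncatE\modto\ncatA$ be left adjoint in $\DistV$. By (2) and Lemma~\ref{d:lem:2}, $\Phi$ is a $\mcatV$-normed retract of a representable $\ncatA(a,-)$ via $\mcatV$-normed $\alpha\colon\ncatA(a,-)\to\Phi$ and $\beta\colon\Phi\to\ncatA(a,-)$ with $\alpha\beta=1_\Phi$. The idempotent $\beta\alpha$ corresponds under the normed Yoneda Lemma to an endomorphism $e\colon a\to a$ with $|\beta\alpha|=|e|$; since $\beta\alpha$ is $\mcatV$-normed, $\kk\leq|e|$, so $e$ is an idempotent of $\ncatA_\circ$. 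By (1) it splits there: $e=s\cdot r$ and $r\cdot s=1_{a'}$ with $\kk\leq|r|,|s|$. Then $\ncatA(r,-)$ and $\ncatA(s,-)$ are $\mcatV$-normed and exhibit $\ncatA(a',-)$ as a second splitting of $\beta\alpha=\ncatA(e,-)$, namely $\ncatA(s,-)\cdot\ncatA(r,-)=1_{\ncatA(a',-)}$ and $\ncatA(r,-)\cdot\ncatA(s,-)=\ncatA(e,-)$. Since splittings of an idempotent are unique up to isomorphism, the composites $\ncatA(s,-)\cdot\beta\colon\Phi\to\ncatA(a',-)$ and $\alpha\cdot\ncatA(r,-)\colon\ncatA(a',-)\to\Phi$ are mutually inverse $\mcatV$-normed isomorphisms; hence $\Phi\cong(a')_{*}$ is representable, and $\ncatA$ is Lawvere complete.

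I expect the main obstacle to be the norm bookkeeping that makes these two splittings interact correctly with $(-)_\circ$. Concretely, the argument turns on the fact that the structural natural transformations involved — precomposition with $e$, $r$, $s$, and the inclusions — are $\mcatV$-normed \emph{precisely because} $e,r,s$ lie in $\ncatA_\circ$, i.e.\ have norm at least $\kk$; this is exactly what ties condition~(1) (splitting of idempotents \emph{in $\ncatA_\circ$}) to the splitting of the associated normed natural transformations in $\DistV(\ncatE,\ncatA)$. The one point requiring genuine care is checking, in the forward direction, that the lifted $\Phi$ restricts back to a copy of $\Phi_0$, for which I would argue that $\Phi_\circ$ and $\Phi_0$ split the same idempotent $\ncatA_\circ(e,-)$.
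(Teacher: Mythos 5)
Your proof is correct, and its overall skeleton matches the paper's: both directions hinge on the same key tools, namely the normed retract characterisation (Lemma~\ref{d:lem:2}, equivalently the proposition following Definition~\ref{w:def:1}), the transfer of idempotents along the (normed) Yoneda Lemma, and uniqueness of idempotent splittings. Your backward direction is exactly the paper's argument with the splitting comparison \(\ncatA(s,-)\beta\), \(\alpha\,\ncatA(r,-)\) written out explicitly (the paper compresses this into one sentence). Where you genuinely diverge is the forward direction for condition~(1): the paper attacks idempotent completeness of \(\ncatA_{\circ}\) head-on --- given an idempotent \(e\) in \(\ncatA_{\circ}\), it splits the \(\kk\)-idempotent \(\ncatA(e,-)\) in \([\ncatA,\nSetV]_{\circ}\) by citing completeness (hence idempotent completeness) of that functor category, concludes via Lemma~\ref{d:lem:2} and Lawvere completeness that the retract is representable, and reads off a splitting of \(e\) by Yoneda. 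You instead prove that \(\ncatA_{\circ}\) is Lawvere complete as an \emph{ordinary} category (starting from an arbitrary left adjoint \(\Phi_0\colon\catE\modto\ncatA_{\circ}\), extracting \(e\), lifting it to the explicit normed splitting \(\Phi(b)=\{f\in\ncatA(a,b)\mid f\cdot e=f\}\), and identifying \(\Phi_{\circ}\cong\Phi_0\) as two splittings of \(\ncatA_{\circ}(e,-)\)) and then invoke \citep[Proposition~7.9.3]{Bor94} to pass from Lawvere completeness to idempotent completeness of \(\ncatA_{\circ}\). This buys a more self-contained argument --- your explicit splitting replaces the appeal to Kelly's completeness of the normed presheaf category --- at the cost of an extra layer (the detour through left adjoints over \(\ncatA_{\circ}\) and the comparison-of-splittings step), whereas the paper's version is shorter because it targets exactly the condition to be proved. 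Both arguments are sound; your norm bookkeeping (\(\kk\leq|e|\) forcing the retraction \(f\mapsto f\cdot e\) and the inclusion to be \(\mcatV\)-normed) is exactly the point the paper relies on as well.
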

\begin{proof}
  Assume first that \(\ncatA\) is Lawvere complete. Then every left adjoint \(\mcatV\)-normed distributor \(\Phi \colon\ncatE\to\ncatA\) is of the form \(\Phi\cong\ncatA(a,-)\), for some object \(a\) of \(\ncatA\), and therefore has a presentable unit. Furthermore, if \(e \colon a\to a\) is an idempotent in \(\ncatA\) with \(\kk\leq |e|\), then the \(\kk\)-idempotent \(\ncatA(e,-)\colon\ncatA(a,-)\to\ncatA(a,-)\) in \([\ncatA,(\nSetV)]\) splits with \(\kk\)-morphisms \(\alpha \colon\ncatA(a,-)\to\Phi\) and \(\beta \colon\Phi\to\ncatA(a,-)\) where \(\beta\alpha=1_{\Phi}\) since \([\ncatA,(\nSetV)]_{\circ}\) is complete and therefore idempotent complete (see \citep[Sections~3.3 and 3.8]{Kel82}). By Lemma~\ref{d:lem:1}, \(\Phi\) is left adjoint in \(\DistV\) and therefore is of the form \(\Phi\cong\ncatA(b,-)\) for some \(b\in\ob\ncatA\), by the Yoneda Lemma, this provides a splitting of \(e\).

  Assume now that \(\ncatA\) satisfies the two conditions. Then every left adjoint \(\mcatV\)-normed distributor \(\Phi \colon\ncatE\modto\ncatA\) is a \(\mcatV\)-normed retract of a representable distributor and, since \(\ncatA_{\circ}\) is idempotent complete, \(\Phi\) is also representable.
\end{proof}

\begin{example}
  For \(\mcatV=\one\) and for \(\mcatV=\two\), a \(\mcatV\)-normed small category \(\ncatA\) is Lawvere complete if and only if \(\ncatA_{\circ}\) is idempotent complete.
\end{example}

\begin{example}
  Theorem~\ref{d:thm:2} and Example~\ref{d:ex:2} imply also the affirmation of Corollary~\ref{d:cor:1}.
\end{example}

\section{Completeness \emph{à la} Cauchy}
\label{sec:compl-la-cauchy}

\subsection{Normed convergence}
\label{sec:normed-convergence}

In the previous section we have seen how the classic metric notion of ``Cauchy completeness'' relates in a metric space to the categorical question of representability of distributors. We stress that this notion is intrinsically symmetric: a sequence \(s\) in a metric space \(X\) is Cauchy if and only if it is Cauchy in \(X\) if and only if it is Cauchy in the symmetrisation of \(X\), and similarly, \(s\) has a limit in \(X\) if and only if it has a limit in \(X^{\op}\) if and only if it has a limit in the symmetrisation of \(X\). A non-symmetric notion of ``Cauchy completeness'' was introduced by \citet{BBR98}: a sequence $s=(x_n)$ in a metric space \(X\) is \emph{(forward) Cauchy} whenever
\begin{displaymath}
  \inf_{N\in\ncatN}\sup_{n\geq m\geq N}X(x_m,x_n)=0,
\end{displaymath}
and an element \(x\in X\) is a \emph{(forward) limit} of $s$ whenever
\begin{displaymath}
  X(x,y)=\inf_{N\in\ncatN}\sup_{n\geq N}X(x_n,y),
\end{displaymath}
for all $y\in X$. Then a metric space \(X\) is \emph{complete} if every Cauchy sequence in \(X\) has a limit.

For normed categories, (limits of) Cauchy sequences of arrows were considered in \citep{Kub17, Nee20}. A revised notion and theory of normed convergence in the context of \(\mcatV\)-normed categories was developed in \citep{CHT25}, which we recall first. We consider here the ordered set $\ncatN$ as a category, discretely $\mcatV$-normed with constant value $\bot$ for all non-identical arrows. This way a $\mcatV$-normed functor $\ncatN\to\ncatX$ can be written as a sequence $s=(x_m\xrightarrow{s_{m,n}}x_n)_{m\leq n\in\ncatN}$ of arrows in \(\ncatX\) where, for all \(m\leq n\leq l\) in \(\ncatN\), \(s_{m,m}=1_{x_m}\) and \(s_{m,l}=s_{n,l}\cdot s_{m,n}\).

\begin{definition}
  Let \(s=(x_m\xrightarrow{s_{m,n}}x_n)_{m\leq n\in\ncatN}\) be a sequence in a $\mcatV$-normed category $\ncatX$. An object $x$ together with a cocone $\gamma=(x_n\xrightarrow{\gamma_n}x)_{n\in \ncatN}$ is a \emph{normed colimit} of $s$ in $\ncatX$ if
  \begin{enumerate}[wide, labelindent=5pt, leftmargin=*]
  \item[(C1)] $\gamma \colon s\to \Delta x$ is a colimit cocone in the ordinary category $\ncatX$, and
  \item[(C2)] for all objects $y$ in $\ncatX$, the canonical $\Set$-bijections
    \begin{displaymath}
      \begin{tikzcd}
        \mathrm{Nat}(s_{|N},\Delta y) %
        \ar{r}{\kappa_N} %
        & \ncatX(x,y), %
        && (f\cdot\gamma_n)_{n\geq N} %
        & f\ar[mapsto]{l}[swap]{\kappa_N^{-1}} %
        & (N\in\mathbb{N})
      \end{tikzcd}
    \end{displaymath}
    form a colimit cocone in $\SetV$, where $s_{|N}$ is the restriction of $s$ to $\uparrow\! N=\{N, N+1, \dots\}$ and $\mathrm{Nat}(s_{|N},\Delta y)=[\uparrow\! N,\ncatX](s_{|N},\Delta y)\;(N\in\ncatN)$ is considered as a sequence in $\Setss{\mcatV}$, with all connecting maps given by restriction. Also note that a colimit $x$ of $s$ in the ordinary category $\ncatX$ is also a colimit of every restricted sequence $s_{|N}$.
  \end{enumerate}
\end{definition}

We also recall from \citep{CHT25} the following characterisation of normed colimits.

\begin{corollary}
  An object $x$ with a cocone $\gamma \colon s\to \Delta x$ is a normed colimit of a sequence $s$ in a $\mcatV$-normed category $\ncatX$ if and only if $\gamma$ is a colimit cocone in the ordinary category $\ncatX$ such that
  \begin{enumerate}[wide, labelindent=5pt, leftmargin=*]
  \item[\mylabel{d:normed1}{\normalfont{(C2a)}}] $\kk\leq\bigvee_{N\in\ncatN}\bigwedge_{n\geq N}|\gamma_n|$,
  \item[\mylabel{d:normed2}{\normalfont{(C2b)}}] $|f|\geq \bigvee_{N\in\ncatN} \bigwedge_{n\geq N}|f\cdot\gamma_n|$, for every morphism $f \colon x\to y$ in $\ncatX$.
  \end{enumerate}
\end{corollary}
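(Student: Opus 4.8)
The plan is to unwind condition (C2) using the fact (Proposition~\ref{d:prop:2}) that \(\mO\colon\SetV\to\Set\) is topological, so that a colimit in \(\SetV\) is computed by taking the underlying colimit in \(\Set\) and equipping it with the final norm. First I would note that, since each \(\kappa_N\) is a bijection and the restriction maps correspond under these bijections to the identity on \(\ncatX(x,y)\), the underlying \(\Set\)-cocone \((\kappa_N)_N\) is already a colimit cocone in \(\Set\) (the diagram is isomorphic to the constant one over the directed set \(\ncatN\)). Hence (C2), for a fixed object \(y\), amounts exactly to the statement that the given norm on \(\ncatX(x,y)\) agrees with the final norm induced by the \(\kappa_N\); using the description of final structures recalled after Proposition~\ref{d:prop:2}, and since \(\kappa_N^{-1}(f)=(f\cdot\gamma_n)_{n\geq N}\) has norm \(\bigwedge_{n\geq N}|f\cdot\gamma_n|\), the final norm of \(f\) is \(\bigvee_{N\in\ncatN}\bigwedge_{n\geq N}|f\cdot\gamma_n|\). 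Thus (C2) is equivalent to the equality \(|f|=\bigvee_{N}\bigwedge_{n\geq N}|f\cdot\gamma_n|\) holding for every \(y\) and every \(f\colon x\to y\).

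Next I would split this equality into its two inequalities. The inequality \(|f|\geq\bigvee_{N}\bigwedge_{n\geq N}|f\cdot\gamma_n|\) is precisely \ref{d:normed2}; equivalently, it says that each \(\kappa_N\) is a \(\mcatV\)-normed map, i.e.\ that \((\kappa_N)_N\) is at least a cocone in \(\SetV\). It then remains to show that the reverse inequality \(|f|\leq\bigvee_{N}\bigwedge_{n\geq N}|f\cdot\gamma_n|\), demanded of all \(y\) and all \(f\), is equivalent to the single condition \ref{d:normed1}.

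For the implication from the reverse inequality to \ref{d:normed1}, I would specialise to \(y=x\) and \(f=1_x\): then \(f\cdot\gamma_n=\gamma_n\), so the reverse inequality yields \(|1_x|\leq\bigvee_{N}\bigwedge_{n\geq N}|\gamma_n|\), and combining with \(\kk\leq|1_x|\) gives \ref{d:normed1}. For the converse, assuming \ref{d:normed1}, I would invoke the composition law \(|f\cdot\gamma_n|\geq|f|\otimes|\gamma_n|\) of a \(\mcatV\)-normed category together with the fact that \(\otimes\) is lax with respect to meets in each variable to obtain \(\bigwedge_{n\geq N}|f\cdot\gamma_n|\geq|f|\otimes\bigwedge_{n\geq N}|\gamma_n|\); then, because \(\mcatV\) is a quantale and \(|f|\otimes-\) preserves suprema, taking \(\bigvee_N\) gives \(\bigvee_{N}\bigwedge_{n\geq N}|f\cdot\gamma_n|\geq|f|\otimes\bigvee_{N}\bigwedge_{n\geq N}|\gamma_n|\geq|f|\otimes\kk=|f|\), which is the desired reverse inequality for every \(y\) and \(f\).

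Combining the three steps, (C2) is equivalent to the conjunction of \ref{d:normed2} (the first inequality) and \ref{d:normed1} (which governs the second), as claimed. The only point requiring care — and the step I expect to be the main obstacle — is the interaction of \(\otimes\) with the infinitary lattice operations: one must exploit that \(\otimes\) distributes over arbitrary joins (so the \(\bigvee_N\) can be pulled out) while being only lax over meets (so the \(\bigwedge_{n\geq N}\) is handled by an inequality in the correct direction). Keeping the two directions straight, and verifying that the single instance \(y=x\), \(f=1_x\) already recovers \ref{d:normed1} from the seemingly stronger family of reverse inequalities, is where the argument must be precise.
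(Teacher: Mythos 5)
Your proposal is correct. Note that the paper itself gives no proof of this corollary --- it is explicitly \emph{recalled} from \citep{CHT25} --- so there is no in-paper argument to compare against; but your verification is exactly the natural one, and it is sound at every step. In particular: (i) using that \(\mO\colon\SetV\to\Set\) is topological (Proposition~\ref{d:prop:2}), a cocone in \(\SetV\) is colimiting if and only if its underlying cocone is colimiting in \(\Set\) and the vertex carries the final norm (both directions of this equivalence hold because \(\mO\), having both adjoints, preserves colimits, and the comparison isomorphism then has identity underlying map); (ii) under (C1) each \(\kappa_N\) is a bijection and the restriction maps transport to identities, so the underlying \(\Set\)-cocone is automatically colimiting, reducing (C2) to the single equality \(|f|=\bigvee_{N}\bigwedge_{n\geq N}|f\cdot\gamma_n|\) for all \(f\); (iii) the split into \ref{d:normed2} plus the reverse inequality, with the reverse inequality shown equivalent to \ref{d:normed1} via \(f=1_x\) and \(\kk\leq|1_x|\) in one direction, and via \(|f\cdot\gamma_n|\geq|f|\otimes|\gamma_n|\) together with distributivity of \(\otimes\) over joins in the other, is exactly right, and you correctly flag that the meet is handled only by a lax inequality (in the direction needed) while the join is pulled through \(\otimes\) exactly. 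This is essentially how the characterisation is obtained in \citep{CHT25}, so your route agrees with the source the paper relies on.
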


\begin{definition}
  Let \(\mcatV\) be a quantale and \(\ncatX\) be a \(\mcatV\)-normed category. A sequence \(s=(x_m\xrightarrow{s_{m,n}}x_n)_{m\leq n\in\ncatN}\) is \emph{(forward) Cauchy} whenever \(\kk\leq \bigvee_{N\in\ncatN}\bigwedge_{N\leq m\leq n}|s_{m,n}|\). The \(\mcatV\)-normed category \(\ncatX\) is \emph{Cauchy cocomplete} whenever every Cauchy sequence has a normed colimit in \(\ncatX\).
\end{definition}

\subsection{Examples of Cauchy cocomplete normed categories}
\label{sec:exampl-cauchy-cocomp}

Several \(\mcatV\)-normed categories were shown to be Cauchy cocomplete in \citep{CHT25}, in particular, the presheaf categories \([\ncatX,\nSetV]\) are Cauchy cocomplete \emph{under certain conditions on \(\mcatV\)}. Our first result shows that, at least for \(\ncatX=\ncatE\), this result holds for all quantales \(\mcatV\).

\begin{theorem}\label{d:thm:3}
  For every quantale \(\mcatV\), the \(\mcatV\)-normed category \(\nSetV\) is Cauchy cocomplete.
\end{theorem}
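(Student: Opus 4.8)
The plan is to build the normed colimit by hand. The underlying ordinary category $\cb{\mO}(\nSetV)$ is equivalent to $\Set$ and hence has all sequential colimits, so condition (C1) will be automatic once the set-level colimit is formed; the entire content lies in choosing the correct norm on that colimit and checking conditions (C2a) and (C2b) of the corollary characterising normed colimits.

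Concretely, I would fix a Cauchy sequence $s=(A_m\xrightarrow{s_{m,n}}A_n)_{m\leq n}$ in $\nSetV$ and take the ordinary colimit $A=\colim_n A_n$ in $\Set$, with cocone maps $\gamma_n\colon A_n\to A$, $a\mapsto[a]$. Because the morphisms of $\nSetV$ are arbitrary maps, \emph{every} normed set carried by the set $A$ is already a colimit of $s$ in $\cb{\mO}(\nSetV)$, so (C1) holds regardless of the norm. The right choice is the ``$\liminf$'' norm
$$|[a]|=\bigvee_{N\geq m}\bigwedge_{n\geq N}|s_{m,n}(a)|\qquad(a\in A_m),$$
which I would first check to be independent of the representative, using that $s_{m,n}(a)=s_{m',n}(a')$ for all large $n$ whenever $[a]=[a']$.

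Writing $w_N:=\bigwedge_{N\leq m\leq n}|s_{m,n}|$, so that the Cauchy hypothesis reads $\kk\leq\bigvee_N w_N$ and unwinds (via $|s_{m,n}|=\bigwedge_{a\in A_m}[|a|,|s_{m,n}(a)|]$) into $w_N\otimes|a|\leq|s_{m,n}(a)|$ for all $N\leq m\leq n$ and $a\in A_m$, I would verify (C2a) as follows. For $a\in A_n$ and $N\leq n$, taking the term $N'=n$ in the defining join gives $w_N\otimes|a|\leq\bigwedge_{n'\geq n}|s_{n,n'}(a)|\leq|[a]|$, hence $w_N\leq[|a|,|[a]|]$ and therefore $w_N\leq|\gamma_n|$. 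As this holds for all $n\geq N$, we get $w_N\leq\bigwedge_{n\geq N}|\gamma_n|$, and joining over $N$ yields $\kk\leq\bigvee_N w_N\leq\bigvee_N\bigwedge_{n\geq N}|\gamma_n|$, which is (C2a).

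The last and most delicate step is (C2b): for a map $f\colon A\to B$ I must show $w:=\bigvee_N\bigwedge_{n\geq N}|f\cdot\gamma_n|\leq|f|=\bigwedge_{[a]}[|[a]|,|f([a])|]$, equivalently $w\otimes|[a]|\leq|f([a])|$ for every class $[a]$. Two facts drive the computation: every forward image $s_{m,n}(a)$ represents the same class, so $|f([s_{m,n}(a)])|=|f([a])|$; and $\otimes$ preserves joins, so $w\otimes|[a]|$ may be distributed over both joins defining $w$ and $|[a]|$. Each resulting term then reduces, by monotonicity of $\otimes$ (routing the inner meet through a single index $n'\geq\max(N,m)$), to the elementary inequality $\bigl(\bigwedge_{n\geq N}|f\cdot\gamma_n|\bigr)\otimes|s_{m,n'}(a)|\leq|f([a])|$, which holds because $\bigwedge_{n\geq N}|f\cdot\gamma_n|=\bigwedge_{n\geq N}\bigwedge_{a'\in A_n}[|a'|,|f([a'])|]$ is bounded above by $[|s_{m,n'}(a)|,|f([a])|]$. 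The main obstacle is precisely this quantale bookkeeping in (C2b): since $\otimes$ does not commute with the meets appearing in the norms, every meet must be funnelled through one well-chosen index before monotonicity is applied. I note that (C2b) never uses the Cauchy hypothesis, which enters only through (C2a).
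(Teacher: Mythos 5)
Your proof is correct, and its skeleton is the same as the paper's: form the colimit in \(\Set\) (so (C1) is automatic because \(\cb{\mO}(\nSetV)\simeq\Set\)), endow it with an explicit norm, and verify (C2a) and (C2b) by quantale computations, with the Cauchy hypothesis used only for (C2a). The genuine difference is the norm itself. You norm a class by a lattice-liminf along the forward orbit of a chosen representative,
\begin{displaymath}
  |[a]|=\bigvee_{M\geq m}\bigwedge_{n\geq M}|s_{m,n}(a)|
  \qquad(a\in A_m),
\end{displaymath}
whereas the paper norms it representative-freely by a lattice-limsup over \emph{all} preimages under the cocone maps,
\begin{displaymath}
  |[a]|=\bigwedge_{N\in\ncatN}\;\bigvee_{n\geq N}\;\bigvee_{a'\in\gamma_n^{-1}([a])}|a'|,
\end{displaymath}
which is the meet over \(N\) of the \(\mO\)-final structures induced by the tail cocones. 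For non-Cauchy sequences these two norms genuinely differ (take all \(A_n=\{\star\}\) over \([0,\infty]\) with \(|\star|\) alternating between \(0\) and \(1\): the paper's formula gives \(0\), yours gives \(1\)); for Cauchy sequences they must agree, since both cocones then satisfy (C2a) and (C2b), and applying (C2b) of either structure to the identity map together with (C2a) of the other shows the identity is norm-preserving both ways. What your formula costs is the well-definedness check (the paper's needs none); what it buys is a streamlined (C2b), where everything reduces to the single evaluation \(|f\cdot\gamma_{n'}|\otimes|s_{m,n'}(a)|\leq[|s_{m,n'}(a)|,|f([a])|]\otimes|s_{m,n'}(a)|\leq|f([a])|\) at one late index, and the role of the Cauchy hypothesis is cleanly isolated in (C2a). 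One detail to tighten: after distributing \(w\otimes|[a]|\) over the two joins, the chosen index must dominate \emph{both} thresholds, \(n'\geq\max(N,M)\) with \(M\) the join index in \(|[a]|\); your \(n'\geq\max(N,m)\) does not by itself justify the reduction \(\bigwedge_{n\geq M}|s_{m,n}(a)|\leq|s_{m,n'}(a)|\). This is harmless---since all the meets are monotone in their thresholds you may take the two join indices equal before choosing \(n'\)---but it should be said.
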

\begin{proof}
  The proof proceeds as in the second part of the proof of \citep[Theorem~7.1]{CHT25}. Consider a Cauchy sequence \(s=(A_m\xrightarrow{s_{m,n}}A_n)_{m\leq n\in\ncatN}\) in \(\nSetV\), hence, \(\kk\leq \bigvee_{N\in\ncatN}\bigwedge_{n\geq m\geq N}|s_{m,n}|\). Take the colimit \((\gamma_n \colon A_n\to A)_{n\in\ncatN}\) in \(\Set\) and structure \(A\) by
  \begin{displaymath}
    |a|=\bigwedge_{N\in\ncatN}\Phi_a(N),\qquad\text{where }
    \Phi_a(N)=\bigvee_{n\geq N}\bigvee_{a'\in\gamma_n^{-1}(a)}|a'|,
  \end{displaymath}
  for all \(a\in A\). Note that, since \(\Phi_a\) is decreasing in \(N\), one has \(a=\bigwedge_{N\geq K}\Phi_a(N)\) for every \(K\in\ncatN\). Also note that, for every \(n\geq K\) in \(\ncatN\) and every \(a\in A_K\), one has \(|s_{K,n}|\otimes |a|\leq |s_{K,n}(a)|\). To see~\ref{d:normed1}, we show first that, for all \(K\in\ncatN\),
  \begin{displaymath}
    \bigwedge_{n\geq K}|s_{K,n}|\leq|\gamma_K|.
  \end{displaymath}
  Take \(a\in A_K\), then
  \begin{align*}
    |a|\otimes \bigwedge_{n\geq K}|s_{K,n}|
    &\leq |a|\otimes\bigwedge_{m\geq K}\bigvee_{n\geq m}|s_{K,n}|\\
    &\leq \bigwedge_{m\geq K}\bigvee_{n\geq m}
      \underbrace{|s_{K,n}|\otimes |a|}_{\leq |s_{K,n}(a)|}\\
    &\leq \bigwedge_{m\geq K}\Phi_{\gamma_K(a)}(m)=|\gamma_K(a)|.
  \end{align*}
  Therefore
  \begin{displaymath}
    \bigwedge_{n\geq K}|s_{K,n}|
    \leq\bigwedge_{a\in A_K}[|a|,|\gamma_K(a)|]=|\gamma_K|.
  \end{displaymath}
  Finally,
  \begin{displaymath}
    \kk\leq\bigvee_{N\in\ncatN}\bigwedge_{K\geq N}\bigwedge_{n\geq K}|s_{K,n}|
    \leq \bigvee_{N\in\ncatN}\bigwedge_{K\geq N}|\gamma_K|,
  \end{displaymath}
  which proves \ref{d:normed1}. Regarding \ref{d:normed2}, let \(\varphi \colon A\to B\) in \(\nSetV\), \(a\in A\) and \(K\in\ncatN\). Then
  \begin{align*}
    \Big(\bigwedge_{n\geq K}|\varphi\cdot\gamma_n|\Big)
    \otimes\Big(\bigwedge_{N\geq K}\Phi_a(N)\Big)
    &\leq \bigwedge_{N\geq K}
      \Big(\Big(\bigwedge_{n\geq K}|\varphi\cdot\gamma_n|\Big)
      \otimes
      \Big(\bigvee_{m\geq N}\bigvee_{a'\in\gamma_m^{-1}(a)}|a'|\Big)\Big)\\
    &= \bigwedge_{N\geq K}\bigvee_{m\geq N}\bigvee_{a'\in\gamma_m^{-1}(a)}
      \Big(\Big(\bigwedge_{n\geq K}|\varphi\cdot\gamma_n|\Big)
      \otimes |a'|\Big)\\
    &\leq \bigwedge_{N\geq K}\bigvee_{m\geq N}\bigvee_{a'\in\gamma_m^{-1}(a)}
      \bigwedge_{n\geq K}(|\varphi\cdot\gamma_n|\otimes |a'|)\\
    &\leq \bigwedge_{N\geq K}\bigvee_{m\geq N}\bigvee_{a'\in\gamma_m^{-1}(a)}
      (|\varphi\cdot\gamma_m|\otimes |a'|)\\
    &\leq \bigwedge_{N\geq K}\bigvee_{m\geq N}\bigvee_{a'\in\gamma_m^{-1}(a)}
      |\varphi\cdot\gamma_m(a')|\leq|\varphi(a)|.
  \end{align*}
  Therefore
  \begin{displaymath}
    \bigwedge_{n\geq K}|\varphi\cdot\gamma_n|
    \leq \bigwedge_{a\in A}[|a|,\varphi(a)|]=|\varphi|,
  \end{displaymath}
  which proves \ref{d:normed2}.
\end{proof}

\begin{remark}
  We note that \(\nSetV\) is also Lawvere complete, by Remark~\ref{d:rem:2}.
\end{remark}

Let \(\DSets{\mcatV}\) be the normed category of ``\(\mcatV\)-distance sets'': \(\DSets{\mcatV}\) has as objects sets \(X\) equipped with an arbitrary (distance) function \(X(-,-)\colon X\times X\to\mcatV\), and as morphisms maps \(\varphi\colon X\to Y\) with norm
\begin{displaymath}
  |\varphi|=\bigwedge_{x,x' \in X}[X(x,x'),Y(\varphi x,\varphi x)].
\end{displaymath}
We note that there is a canonical norm-preserving functor
\begin{displaymath}
  D \colon\DSets{\mcatV}\longrightarrow\nSetV
\end{displaymath}
sending \(X\) to \(X\times X\).

\begin{proposition}\label{d:prop:5}
  For every quantale \(\mcatV\), the normed category \(\DSets{\mcatV}\) is Cauchy cocomplete.
\end{proposition}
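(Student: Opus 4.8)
The plan is to transport the Cauchy cocompleteness of $\nSetV$ (Theorem~\ref{d:thm:3}) along the norm-preserving functor $D\colon\DSets{\mcatV}\to\nSetV$, which sends a distance set $X$ to the normed set $X\times X$ with $|(x,x')|=X(x,x')$, and a map $\varphi$ to $\varphi\times\varphi$. Since $|D\varphi|=|\varphi|$ by the very definition of the two norms, and since the Cauchy condition $\kk\leq\bigvee_{N}\bigwedge_{N\leq m\leq n}|s_{m,n}|$ depends only on the norms of the connecting maps, a sequence $s=(X_m\xrightarrow{s_{m,n}}X_n)_{m\leq n\in\ncatN}$ in $\DSets{\mcatV}$ is Cauchy if and only if $Ds$ is Cauchy in $\nSetV$. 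Hence, for $s$ Cauchy, Theorem~\ref{d:thm:3} provides a normed colimit of $Ds$.

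First I would identify this normed colimit explicitly. By the construction in the proof of Theorem~\ref{d:thm:3}, its underlying set is the $\Set$-colimit $\colim_n(X_n\times X_n)$. Because $\ncatN$ is directed and finite products commute with filtered colimits in $\Set$, this is canonically $X\times X$ with $X=\colim_n X_n$, and the colimit cocone is $(\gamma_n\times\gamma_n)_n=(D\gamma_n)_n$, where $(\gamma_n\colon X_n\to X)_n$ denotes the $\Set$-colimit cocone. Unwinding the norm formula of Theorem~\ref{d:thm:3} on $X\times X$, the equalities $(\gamma_n\times\gamma_n)^{-1}(x,x')=\gamma_n^{-1}(x)\times\gamma_n^{-1}(x')$ and $|(a,a')|=X_n(a,a')$ give
\[
  |(x,x')|=\bigwedge_{N\in\ncatN}\bigvee_{n\geq N}\bigvee_{\substack{a\in\gamma_n^{-1}(x)\\a'\in\gamma_n^{-1}(x')}}X_n(a,a')=:X(x,x').
\]
Thus, equipping $X=\colim_n X_n$ with the distance function $X(-,-)$ just displayed, the normed colimit of $Ds$ is exactly $DX$ with cocone $(D\gamma_n)_n$.

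I would then take $X$ (with this distance) and the cocone $\gamma=(\gamma_n)_n$ as the candidate normed colimit of $s$ in $\DSets{\mcatV}$, and verify the two conditions of the characterisation of normed colimits. Condition (C1) is immediate: morphisms of $\DSets{\mcatV}$ are arbitrary maps, so the distance structure on a colimit object is irrelevant to its universal property, and $(X,\gamma)$ is a colimit in the ordinary category because $(\gamma_n)$ is a colimit cocone in $\Set$. For \ref{d:normed1} and \ref{d:normed2} I would transport the corresponding conditions for $Ds$ across $D$, using $|D\varphi|=|\varphi|$ and $D(f\cdot\gamma_n)=Df\cdot D\gamma_n$. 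Condition \ref{d:normed1} for $Ds$ reads $\kk\leq\bigvee_{N}\bigwedge_{n\geq N}|D\gamma_n|=\bigvee_{N}\bigwedge_{n\geq N}|\gamma_n|$, which is \ref{d:normed1} for $s$. For \ref{d:normed2}, given a morphism $f\colon X\to Y$ in $\DSets{\mcatV}$, I would apply \ref{d:normed2} for $Ds$ to the normed map $Df\colon X\times X\to DY$, obtaining $|f|=|Df|\geq\bigvee_{N}\bigwedge_{n\geq N}|Df\cdot D\gamma_n|=\bigvee_{N}\bigwedge_{n\geq N}|f\cdot\gamma_n|$, which is \ref{d:normed2} for $s$.

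The main obstacle is the identification in the second step: one must check that $D$ sends the colimit cocone of $s$ to the colimit cocone of $Ds$, which rests on finite products commuting with the sequential colimit in $\Set$ (filtered colimits commute with finite limits), and then match the norm produced by Theorem~\ref{d:thm:3} with the intended distance on $X$. Once this identification is in place, the verification of \ref{d:normed1} and \ref{d:normed2} is a routine transport along the norm-preserving $D$; note in particular that \ref{d:normed2} for $Ds$ is tested against all normed maps out of $X\times X$, of which those of the form $Df$ are exactly the instances needed for $s$.
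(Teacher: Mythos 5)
Your proposal is correct and follows essentially the same route as the paper's proof: transport the Cauchy sequence along the norm-preserving functor $D$, invoke Theorem~\ref{d:thm:3}, use the commutation of directed colimits with finite products in $\Set$ to recognise the resulting normed colimit as $X\times X$ with cocone $(\gamma_n\times\gamma_n)_n$, and pull Conditions \ref{d:normed1} and \ref{d:normed2} back along $D$. The only difference is one of detail: you make explicit the induced distance $X(-,-)$ and the verification that testing \ref{d:normed2} against maps of the form $Df$ suffices, steps the paper leaves implicit.
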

\begin{proof}
  Let \(s=(X_m\xrightarrow{s_{m,n}}X_n)_{m\leq n}\) be a Cauchy sequence in \(\Cats{\mcatV}\). Then \(Ds=(X_m\times X_m\xrightarrow{s_{m,n}\times s_{m,n}}X_n\times X_n)_{m\leq n}\) is a Cauchy sequence in \(\nSetV\) and, by Theorem~\ref{d:thm:3}, has a \(\mcatV\)-normed colimit in \(\nSetV\). Since directed colimits commute with finite products in \(\Set\), this colimit must have the form
  \begin{displaymath}
    (X_n\times X_n\xrightarrow{\;\gamma_n\times\gamma_n\;}X\times X)_{n\in\mathbb{N}},
  \end{displaymath}
  where \((\gamma_n \colon X_n\to X)_{n\in \mathbb{N}}\) is a colimit in \(\Set\). Then Conditions~\ref{d:normed1} and \ref{d:normed2} hold for \(\gamma\) since they hold for \(D\gamma\).
\end{proof}

In the sequel we will consider two quantale structures on the lattice \(\mcatV\); besides the tensor \(\otimes\) with unit element \(\kk\) we have also the tensor \(\odot\) with unit element \(e\). For distincion, we write \(\mcatV_{\otimes}\) respectively \(\mcatV_{\odot}\) for the corresponding quantales. Then \(\VLipDot\) denotes the \(\mcatV_{\odot}\)-normed category of \(\mcatV_{\otimes}\)-categories and arbitrary maps, \(\mcatV_{\odot}\)-normed as in \(\DSets{\mcatV_{\odot}}\). We simply write \(\VLip\) if both quantale structures coincide. We recall that, for \(u,v\in\mcatV\), \(u\) is \emph{totally below} \(v\), in symbols: \(u\lll v\), whenever, for all \(S\subseteq\mcatV\), \(v\leq S\) implies \(u\leq s\) for some \(s\in S\). Moreover, \(v\) is \emph{approximated from totally below} whenever \(v=\bigvee\{u\in\mcatV\mid u\lll v\}\).

\begin{theorem}
  \label{d:thm:4}
  Assume that the unit element \(e\) of \(\mcatV_{\odot}\) is approximated from totally below. Then \(\VLipDot\) is Cauchy cocomplete.
\end{theorem}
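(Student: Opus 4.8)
The plan is to show that the normed colimit of a Cauchy sequence, computed in the ambient category $\DSets{\mcatV_{\odot}}$ (where it exists by Proposition~\ref{d:prop:5}), automatically lands in $\VLipDot$; that is, its underlying distance set is again a $\mcatV_{\otimes}$-category. Concretely, let $s=(X_m\xrightarrow{s_{m,n}}X_n)_{m\le n}$ be a Cauchy sequence in $\VLipDot$. Since $\VLipDot$ is, as a $\mcatV_{\odot}$-normed category, the full normed subcategory of $\DSets{\mcatV_{\odot}}$ spanned by those distance sets satisfying the $\mcatV_{\otimes}$-category axioms, $s$ is also Cauchy in $\DSets{\mcatV_{\odot}}$ and hence, by Proposition~\ref{d:prop:5} together with the formula of Theorem~\ref{d:thm:3}, admits a normed colimit $(\gamma_n\colon X_n\to X)_{n}$ whose distance function is
\[
  X(a,b)=\bigwedge_{N\in\ncatN}\ \bigvee_{n\ge N}\ \bigvee_{a'\in\gamma_n^{-1}(a),\,b'\in\gamma_n^{-1}(b)} X_n(a',b').
\]
Writing $\psi_{a,b}(n)=\bigvee_{a'\in\gamma_n^{-1}(a),\,b'\in\gamma_n^{-1}(b)}X_n(a',b')$, so that $X(a,b)=\bigwedge_N\bigvee_{n\ge N}\psi_{a,b}(n)$, reflexivity $\kk\le X(a,a)$ is immediate and needs no extra hypothesis: for every $N$, choosing a representative $a_n$ of $a$ at any stage $n\ge N$, the diagonal term $X_n(a_n,a_n)\ge\kk$ gives $\bigvee_{n\ge N}\psi_{a,a}(n)\ge\kk$, whence the infimum is $\ge\kk$. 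The whole difficulty is the triangle inequality $X(b,c)\otimes X(a,b)\le X(a,c)$; once $X$ is known to be a $\mcatV_{\otimes}$-category, conditions \ref{d:normed1} and \ref{d:normed2} for $\gamma$ in $\VLipDot$ follow from those in $\DSets{\mcatV_{\odot}}$ (the underlying-set colimit is inherited, the norm formulas are identical, and \ref{d:normed2} in $\VLipDot$ quantifies over fewer test objects), so $(\gamma_n)$ is a normed colimit in $\VLipDot$.

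The key tool I would extract is a uniform ``almost-isometry'' estimate for the transition maps. The Cauchy condition reads $e\le\bigvee_{N}\bigwedge_{N\le m\le n}|s_{m,n}|$, and for $u\lll e$ the definition of totally-below yields an $N_u$ with $u\le|s_{m,n}|$ for all $N_u\le m\le n$; unravelling the $\mcatV_{\odot}$-norm this means
\[
  u\odot X_m(x,x')\le X_n(s_{m,n}x,s_{m,n}x')\qquad(N_u\le m\le n,\ x,x'\in X_m).
\]
Because the colimit is sequential (hence directed) in $\Set$, I would fix for each element of $X$ a stage and a representative and propagate it along the $s_{m,n}$, obtaining coherent representatives $a_n,b_n,c_n$ with $\gamma_n(a_n)=a$, etc., and $s_{m,n}(a_m)=a_n$. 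Setting $\chi_{a,b}(n)=X_n(a_n,b_n)$, the estimate above makes $\chi_{a,b}$ ``almost increasing'' and comparable to $\psi_{a,b}$; moreover, for any representatives $x\in\gamma_n^{-1}(a)$, $x'\in\gamma_n^{-1}(b)$ there is a later stage $m_0$ at which they are transported exactly onto $a_{m_0},b_{m_0}$, hence onto $a_m,b_m$ for all $m\ge m_0$. This gives $u\odot X_n(x,x')\le\chi_{a,b}(m)$ for all $m\ge m_0$, and summing over representatives and over the outer $\bigwedge\bigvee$ yields
\[
  u\odot X(a,b)\le\bigvee_{K\in\ncatN}\bigwedge_{n\ge K}\chi_{a,b}(n)=:\liminf_n\chi_{a,b}(n),\qquad \liminf_n\chi_{a,c}(n)\le X(a,c),
\]
the second estimate because $\chi_{a,c}(n)\le\psi_{a,c}(n)$.

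With these estimates the triangle inequality should fall out. Using that $e$ is approximated from totally below and that $\otimes,\odot$ preserve joins, one has $X(b,c)\otimes X(a,b)=\bigvee_{u,u'\lll e}\bigl(u'\odot X(b,c)\bigr)\otimes\bigl(u\odot X(a,b)\bigr)$, so it suffices to bound each term. By the previous paragraph every such term is $\le\liminf_n\chi_{b,c}(n)\otimes\liminf_n\chi_{a,b}(n)$, and the point of the canonical representatives is that the \emph{pointwise} triangle inequality of the $\mcatV_{\otimes}$-category $X_n$ now applies with the common middle term $b_n$, giving $\chi_{b,c}(n)\otimes\chi_{a,b}(n)\le\chi_{a,c}(n)$. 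Distributing $\otimes$ over the joins defining the two liminfs and comparing on the diagonal index then yields $\liminf_n\chi_{b,c}(n)\otimes\liminf_n\chi_{a,b}(n)\le\liminf_n\chi_{a,c}(n)\le X(a,c)$, and taking the join over $u,u'$ proves that $X$ is a $\mcatV_{\otimes}$-category.

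The main obstacle, and exactly where the hypothesis on $e$ is indispensable, is this triangle inequality: the two suprema defining $X(a,b)$ and $X(b,c)$ range over \emph{independent} representatives of the middle point $b$, so the triangle inequality of the individual $X_n$ cannot be applied directly. Reconciling these representatives forces one to transport everything to canonical representatives at a common later stage, and the ``cost'' of this transport is precisely a factor $u\odot(-)$ controlled by the almost-isometry estimate; the assumption $e=\bigvee\{u\mid u\lll e\}$ is what guarantees that these costs are absorbed in the join and vanish in the limit. I expect the careful bookkeeping of the liminf/limsup manipulations relating $\psi_{a,b}$ and $\chi_{a,b}$ (and the repeated use of distributivity of $\otimes$ and $\odot$ over arbitrary joins) to be the most technical part of the write-up.
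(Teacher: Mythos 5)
Your proof is correct and follows essentially the same route as the paper: both reduce the statement to showing that \(\VLipDot\), as a full normed subcategory of \(\DSets{\mcatV_{\odot}}\), is closed under normed colimits of Cauchy sequences (via Proposition~\ref{d:prop:5} and the colimit formula underlying Theorem~\ref{d:thm:3}), and both prove the triangle inequality by converting the Cauchy condition, through elements \(u\lll e\), into an almost-isometry estimate for the transition maps that lets the two independent representatives of the middle point be reconciled at a common later stage, with the cost absorbed by \(e=\bigvee\{u\mid u\lll e\}\). The only divergence is organizational: the paper merges the representatives pairwise inside the triple join and bounds directly by \(\Phi_{x,z}(N)\), whereas you route the argument through fixed coherent representatives and the auxiliary quantity \(\liminf_n\chi_{a,b}(n)\); both versions are valid.
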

\begin{proof}
  We show that \(\VLipDot\), seen as a full subcategory of \(\DSets{\mcatV_{\odot}}\), is closed under the formation of normed colimits of Cauchy sequences in \(\VLipDot\). To this end, let \(s=(X_m\xrightarrow{s_{m,n}}X_{n})\) be a Cauchy sequence in \(\VLipDot\), hence \(e\leq \bigvee_{N\in\mathbb{N}}\bigwedge_{n\geq m\in\mathbb{N}}|s_{m,n}|\). Let \((\gamma_n \colon X_n\to X)_{n\in\mathbb{N}}\) be a normed colimit of \(s\) in \(\DSets{\mcatV_{\odot}}\), in particular, for all \(x,y\in X\) and any \(K\in\mathbb{N}\),
  \begin{displaymath}
    X(x,y)=\bigwedge_{N\geq K}\Phi_{x,y}(N),
    \quad
    \Phi_{x,y}(N)=\bigvee_{n\geq N}\bigvee_{x'\in\gamma_n^{-1}(x)}
    \bigvee_{y'\in\gamma_n^{-1}(y)}X_n(x',y').
  \end{displaymath}
  Clearly, \(e\leq X(x,x)\) for all \(x\in X\). Let now \(x,y,z\in X\) and let \(\varepsilon,\eta\lll e\). Since \(s\) is Cauchy, we can find \(K\in \mathbb{N}\) so that, for all \(n\geq m\geq K\), \(\max\{\varepsilon,\eta\}\leq |s_{m,n}|\). Firstly,
  \begin{align*}
    (\varepsilon\odot X(x,y))\otimes(\eta\odot X(y,z))
    &= \Big(\varepsilon\odot
      \bigwedge_{N\geq K}\Phi_{x,y}(N)\Big)
      \otimes\Big(\eta\odot
      \bigwedge_{M\geq K}\Phi_{y,z}(M)\Big)\\
    &\leq \bigwedge_{N\geq K}
      \big(\varepsilon\odot\Phi_{x,y}(N)\big)
      \otimes
      \big(\eta\odot\Phi_{y,z}(N)\big)\\
    &=\bigwedge_{N\geq K}
      \bigvee_{\substack{n\geq N,\\ m\geq N}}
    \bigvee_{\substack{\gamma_n(x')=x,\\\gamma_n(y')=y}}
    \bigvee_{\substack{\gamma_m(y'')=y,\\\gamma_m(z')=z}}
    \big(\varepsilon\odot X(x',y')\big)
    \otimes
    \big(\eta\odot X(y'',z')\big).
  \end{align*}
  Let now \(N\geq K\), \(m,n\geq N\) and \(x',y',y'',z'\in X\) with \(\gamma_n(x')=x\), \(\gamma_n(y')=y\), \(\gamma_m(y')=y\) and \(\gamma_m(z')=z\). Since \(\gamma_n(y')=\gamma_m(y'')\), there is some \(l\in \mathbb{N}\) with \(n\leq l\), \(m\leq l\) and \(s_{n,l}(y')=s_{m,l}(y'')\). Then we get
  \begin{align*}
    \big(\varepsilon\odot X(x',y')\big)
    \otimes
    \big(\eta\odot X(y'',z')\big)
    &\leq \big(|s_{n,l}|\odot X(x',y')\big)
      \otimes
      \big(|s_{m,l}|\odot X(y'',z')\big)\\
    &\leq X_l(s_{n,l}(x'),s_{n,l}(y'))
      \otimes
      X_l(s_{m,l}(y''),s_{m,l}(z'))\\
    &\leq X_l(s_{n,l}(x'),s_{m,l}(z'))
      \leq\Phi_{x,z}(N).
  \end{align*}
  We conclude that
  \begin{displaymath}
    X(x,y)\otimes X(y,z)
    =\bigvee_{\varepsilon\lll e}(\varepsilon\odot X(x,y))
      \otimes
      \bigvee_{\eta\lll e}(\eta\odot X(y,z))\\
    \leq X(x,z).\qedhere
  \end{displaymath}
\end{proof}

\begin{remark}
  A normed colimit in \(\VLipDot\) of a Cauchy sequence of symmetric \(\mcatV_{\otimes}\)-categories is symmetric.
\end{remark}

As a corollary, choosing \(\odot=\otimes\) and \(e=\kk\), we obtain \cite[Theorem~7.1]{CHT25}.

\begin{corollary}
  Assume that the unit element \(\kk\) of \(\mcatV\) is approximated from totally below. Then \(\VLip\) is Cauchy cocomplete.
\end{corollary}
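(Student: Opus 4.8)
The plan is to obtain this corollary as the diagonal case of Theorem~\ref{d:thm:4}, in which the two quantale structures carried by the underlying lattice \(\mcatV\) are chosen to coincide. Concretely, I would instantiate Theorem~\ref{d:thm:4} by setting \(\odot=\otimes\), so that the second unit element \(e\) equals the tensor-neutral element \(\kk\), and hence \(\mcatV_{\odot}=\mcatV_{\otimes}=\mcatV\).

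Under this identification, the normed category \(\VLipDot\) is by definition nothing but \(\VLip\) --- precisely the convention recorded just before Theorem~\ref{d:thm:4}, where it is stated that one writes \(\VLip\) whenever the two quantale structures coincide. The hypothesis of Theorem~\ref{d:thm:4}, namely that the unit element \(e\) of \(\mcatV_{\odot}\) be approximated from totally below, translates verbatim into the hypothesis of the corollary, namely that \(\kk\) be approximated from totally below. No further argument is then needed: the conclusion of Theorem~\ref{d:thm:4} asserts Cauchy cocompleteness of \(\VLipDot\), which is exactly Cauchy cocompleteness of \(\VLip\).

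There is essentially no obstacle here; all the work has been carried out in the more general Theorem~\ref{d:thm:4}, whose proof keeps \(\otimes\) and \(\odot\) notationally distinct only in order to separate the two roles of the quantale, governing the enrichment of the \(\mcatV_{\otimes}\)-categories on the one hand and the norms of the maps between them on the other. The only point worth a quick check is the bookkeeping: that in the proof of Theorem~\ref{d:thm:4} every occurrence of \(e\), of \(\odot\), and of the totally-below relation collapses consistently to \(\kk\), to \(\otimes\), and to the totally-below relation of \(\mcatV\) once the two structures are merged, so that nothing in the argument covertly required \(\odot\neq\otimes\). Since that proof invokes the interaction of \(\odot\) with \(\otimes\) only through inequalities of the form \(\varepsilon\odot X(x',y')\leq X_l\big(s_{n,l}(x'),s_{n,l}(y')\big)\), which remain valid when \(\odot=\otimes\), this verification is immediate, and the corollary follows at once.
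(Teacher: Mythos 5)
Your proposal is correct and matches the paper's own argument exactly: the paper obtains this corollary precisely by choosing \(\odot=\otimes\) and \(e=\kk\) in Theorem~\ref{d:thm:4}, whereupon \(\VLipDot\) becomes \(\VLip\) by the convention stated just before that theorem. Your extra check that nothing in the proof of Theorem~\ref{d:thm:4} secretly requires the two quantale structures to be distinct is a sensible (if strictly unnecessary) piece of diligence.
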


One obtains another interesting instance of Theorem~\ref{d:thm:4} by considering the lattice \([0,\infty]\) with \(\otimes=+\) and neutral element \(\kk=0\), and \(\odot=\cdot\) multiplication with \(e=1\). Here \(\alpha\cdot\infty=\infty\) for every \(\alpha\in [0,\infty]\) since the tensor preserves the bottom element. The internal hom is given by \([\alpha,\beta]=\frac{\beta}{\alpha}\) for all \(\alpha,\beta\in [0,\infty]\); here
\(  \frac{0}{0}=0,\quad
  \frac{\alpha}{0}=\infty\;(\alpha>0),\quad
  \frac{\alpha}{\infty}=\frac{\infty}{\infty}=0.\)

\begin{corollary}
  The category of Lawvere metric spaces and arbitrary maps \(\varphi \colon X\to Y\) between them, normed by
  \begin{displaymath}
    |\varphi|=\sup_{x,x'\in X}\frac{Y(\varphi x,\varphi x')}{X(x,x')},
  \end{displaymath}
  is Cauchy cocomplete.
\end{corollary}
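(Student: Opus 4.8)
The plan is to exhibit the category in the statement as an instance of \(\VLipDot\) for a suitable pair of quantale structures on \([0,\infty]\) and then to invoke Theorem~\ref{d:thm:4}. Take \(\mcatV=[0,\infty]\) with its lattice order given by the numerical \(\geq\), so that the quantale joins are numerical infima and the quantale meets are numerical suprema, and equip it with the two tensors \(\otimes=+\), \(\kk=0\) and \(\odot=\cdot\), \(e=1\). By Example~\ref{d:ex:4} the \(\mcatV_{\otimes}\)-categories are precisely the Lawvere metric spaces, so the objects match. Since the internal hom for \(\odot\) is \([\alpha,\beta]=\beta/\alpha\) (as recorded just before the corollary), the \(\mcatV_{\odot}\)-norm of an arbitrary map \(\varphi\colon X\to Y\) in \(\DSets{\mcatV_{\odot}}\), namely \(\bigwedge_{x,x'}[X(x,x'),Y(\varphi x,\varphi x')]\), becomes exactly \(\sup_{x,x'}\tfrac{Y(\varphi x,\varphi x')}{X(x,x')}\), because the quantale meet \(\bigwedge\) is the numerical supremum. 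Thus the category in the statement is precisely \(\VLipDot\) for this choice.

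It then remains to verify the single hypothesis of Theorem~\ref{d:thm:4}: that the unit \(e=1\) of \(\mcatV_{\odot}\) is approximated from totally below. Since the relation \(\lll\) depends only on the lattice order and its joins, not on the tensor, I would first determine \(\lll\) on \(([0,\infty],\geq)\). Unwinding the definition — \(u\lll v\) means that whenever \(v\leq\bigvee S\) there is some \(s\in S\) with \(u\leq s\) — and translating the quantale order and join into the numerical \(\geq\) and \(\inf\), this reads: whenever \(\inf S\leq v\) numerically, some \(s\in S\) satisfies \(s\leq u\). I expect the clean outcome \(u\lll v\iff u>v\) numerically. Indeed, if \(u>v\) then any \(S\) with \(\inf S\leq v<u\) must contain an element strictly below \(u\); conversely, if \(u\leq v<\infty\), the set \(S=\{s\mid s>u\}\) has \(\inf S=u\leq v\) yet contains no element \(\leq u\), so \(u\not\lll v\).

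Applying this with \(v=e=1\) gives \(\{u\mid u\lll 1\}=(1,\infty]\), whose quantale join is \(\bigvee(1,\infty]=\inf(1,\infty]=1=e\); hence \(e\) is approximated from totally below. Theorem~\ref{d:thm:4} then yields that \(\VLipDot\) is Cauchy cocomplete, which is the assertion. The only delicate point in this argument is the bookkeeping forced by the reversed lattice order: one must consistently read the quantale's joins, meets, internal hom, and the \(\lll\)-relation through the numerical \(\inf\), \(\sup\), and \(\geq\). Once that translation is fixed, the verification that \(1\) is approximated from totally below is the short computation above, and the rest is the identification of the category as \(\VLipDot\).
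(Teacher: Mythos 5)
Your proposal is correct and follows exactly the paper's route: the corollary is obtained as the instance of Theorem~\ref{d:thm:4} with \(\mcatV=[0,\infty]\), \(\otimes=+\), \(\kk=0\) and \(\odot=\cdot\), \(e=1\), which the paper sets up in the paragraph immediately preceding the statement. Your explicit verification that \(u\lll v\) on \(([0,\infty],\geq)\) means \(u>v\) numerically, so that \(e=1=\bigvee\{u\mid u\lll 1\}\) is approximated from totally below, is exactly the hypothesis check the paper leaves implicit, and your bookkeeping of the reversed order (quantale joins as numerical infima, meets as suprema) is carried out correctly.
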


\begin{corollary}
  The normed category \(\Met_{\infty}\) of Lawvere metric spaces and arbitrary maps \(\varphi \colon X\to Y\) between them, normed by
  \begin{displaymath}
    |\varphi|=\max\{0,\log\Big(\sup_{x,x'\in X}\frac{Y(\varphi x,\varphi x')}{X(x,x')}\Big)\},
  \end{displaymath}
  is Cauchy cocomplete.
\end{corollary}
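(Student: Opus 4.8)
The plan is to derive this from the \emph{preceding} corollary rather than from Theorem~\ref{d:thm:4} directly. Write $C_1$ for the normed category of Lawvere metric spaces and arbitrary maps equipped with the multiplicative Lipschitz norm $|\varphi|_1=\sup_{x,x'}\frac{Y(\varphi x,\varphi x')}{X(x,x')}$, valued in the quantale $\mcatV_{\odot}=([0,\infty]_{\geq},\cdot,1)$; the preceding corollary asserts that $C_1$ is Cauchy cocomplete. Write $C_2$ for the category in the present statement. The two have the same objects, the same morphisms, and the same underlying $([0,\infty],+,0)$-enrichment on objects; only the norms differ, by $|\varphi|_2=h(|\varphi|_1)$ where $h\colon[0,\infty]\to[0,\infty]$, $h(r)=\max\{0,\log r\}$. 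The whole argument rests on three elementary facts about $h$: it is monotone, it is continuous, and $h(1)=0$. Being monotone and continuous on the chain $[0,\infty]$, $h$ preserves all infima and suprema, equivalently it commutes with the quantale operations $\bigvee$ and $\bigwedge$ (which for the $\geq$-order are ordinary $\inf$ and $\sup$); and $h(1)=0$ sends the norm-unit $e=1$ of $C_1$ to the norm-unit $0$ of $C_2$.

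First I would check that a sequence $s=(s_{m,n})$ is forward Cauchy in $C_2$ if and only if it is so in $C_1$. Since $h$ commutes with $\bigvee$ and $\bigwedge$, one has $\bigvee_{N}\bigwedge_{N\leq m\leq n}|s_{m,n}|_2=h(A)$, where $A:=\bigvee_{N}\bigwedge_{N\leq m\leq n}|s_{m,n}|_1$. The $C_2$-Cauchy condition (unit $0$) reads $h(A)=0$, whereas the $C_1$-Cauchy condition (unit $1$) reads $A\leq 1$ in the usual order; and $h(A)=0\iff A\leq 1$, so the two conditions coincide.

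Next I would take a forward-Cauchy sequence $s$ in $C_2$, hence forward Cauchy in $C_1$, together with a normed colimit $(\gamma_n\colon X_n\to X)_n$ of $s$ in $C_1$, which exists by the preceding corollary. As $X$ is again a Lawvere metric space and $\gamma$ a cocone in the shared ordinary category, the pair $(X,\gamma)$ already lives in $C_2$; it remains to verify \ref{d:normed1} and \ref{d:normed2} for $|{-}|_2$. Condition (C1) is literally the same statement in both, since it refers only to the common ordinary category. For \ref{d:normed1} the computation of the previous paragraph (with $\gamma_n$ in place of $s_{m,n}$) shows that $\bigvee_{N}\bigwedge_{n\geq N}|\gamma_n|_2=0$ is equivalent to $\bigvee_{N}\bigwedge_{n\geq N}|\gamma_n|_1\leq 1$, i.e.\ to its $C_1$-version. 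For \ref{d:normed2}, commuting $h$ past $\bigvee,\bigwedge$ identifies $\bigvee_{N}\bigwedge_{n\geq N}|f\cdot\gamma_n|_2$ with $h\bigl(\bigvee_{N}\bigwedge_{n\geq N}|f\cdot\gamma_n|_1\bigr)$, and the $C_1$-inequality $|f|_1\geq\bigvee_{N}\bigwedge_{n\geq N}|f\cdot\gamma_n|_1$ yields the required $|f|_2\geq\bigvee_{N}\bigwedge_{n\geq N}|f\cdot\gamma_n|_2$ upon applying the monotone map $h$ to both sides (here $\geq$ is the quantale order). Hence $(X,\gamma)$ is a normed colimit of $s$ in $C_2$, so $C_2$ is Cauchy cocomplete.

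The only delicate point is the bookkeeping of the two orders and two tensors: objects are enriched in $([0,\infty],+)$ while the norms take values in two distinct quantale structures on $[0,\infty]_{\geq}$, related by $h$. I do not expect a genuine obstacle once one observes that the Cauchy condition and the normed-colimit conditions \ref{d:normed1}, \ref{d:normed2} are phrased solely through $\bigvee$, $\bigwedge$ and the respective units---all respected by $h$---while the construction of the colimit metric in Theorem~\ref{d:thm:4} uses only the \emph{unchanged} tensor $\otimes=+$. I would note in passing that $h$ collapses $[0,1]$ to $0$ and so is not injective; accordingly $C_1$ and $C_2$ are not isomorphic as normed categories, and the argument is a one-directional transfer of Cauchy cocompleteness along $h$, not an equivalence.
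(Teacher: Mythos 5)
Your proof is correct, and it supplies exactly what the paper leaves implicit: the paper states this corollary without any proof, presenting it (together with the preceding one) as an ``instance'' of Theorem~\ref{d:thm:4}. In fact it cannot be a \emph{literal} instance of that theorem: if some quantale structure \(\odot'\) on \([0,\infty]_{\geq}\) had internal hom \([\alpha,\beta]=\max\{0,\log(\beta/\alpha)\}\), the adjunction defining the hom would force \(v\odot'\alpha=e^{v}\alpha\), which is neither commutative nor associative, so the log-normed category \(\Met_{\infty}\) is not of the form \(\VLipDot\) for any choice of \(\odot\). Hence a transfer argument of the kind you give --- from the multiplicative-Lipschitz corollary along \(h=\max\{0,\log(-)\}\) --- is genuinely what is needed, and is presumably the authors' intended reading. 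The three properties of \(h\) you isolate (monotone and continuous, hence preserving all \(\bigvee\) and \(\bigwedge\) in the complete chain \([0,\infty]_{\geq}\); unit-reflecting, \(h(A)=0\iff A\leq 1\)) are precisely what makes the Cauchy condition, \ref{d:normed1} and \ref{d:normed2} carry over, and your bookkeeping of the two orders is accurate, including the observation that only the one direction \(C_1\Rightarrow C_2\) of the transfer is required. One small point worth a line in a polished write-up: \(h\) is also lax monoidal from \(([0,\infty]_{\geq},\cdot,1)\) to \(([0,\infty]_{\geq},+,0)\), i.e.\ \(h(rs)\leq h(r)+h(s)\) and \(h(1)=0\), which is what guarantees in the first place that the rescaled data \(\Met_{\infty}\) is again a \(\mcatV\)-normed category (composition and identity axioms); the statement presupposes this, but your argument tacitly uses that \(C_2\) is a legitimate normed category with the same underlying ordinary category as \(C_1\).
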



\end{document}